\newtheorem{theorem}{Theorem}[section]
\newtheorem{lemma}[theorem]{Lemma}
\newtheorem{proposition}[theorem]{Proposition}
\newtheorem{corollary}[theorem]{Corollary}
\newtheorem{definition}{Definition}[section]
\theoremstyle{remark}
\newtheorem{remark}[theorem]{Remark}
\theoremstyle{definition}
\numberwithin{equation}{section}
\newcommand{\R}{\ensuremath{\mathbb{R}}}
\newcommand{\N}{\ensuremath{\mathbb{N}}}
\newcommand{\veps}{\varepsilon}
\newcommand{\plap}{\ensuremath{\Delta_p}}
\newcommand{\nplap}{\ensuremath{\Delta_p}^{\textup{N}}}
\newcommand{\dd}{\,\mathrm{d}}
\begin{document}

\title[Asymptotic mean value properties for non-homogeneous $p$-Laplace problems]{Game theoretical asymptotic mean value properties \\ for non-homogeneous $p$-Laplace problems}

\author[F.~del~Teso]{F\'elix del Teso}

\address[F. del Teso]{Departamento de Matemáticas, Universidad Aut\'onoma de Madrid (UAM), Campus de Cantoblanco, 28049 Madrid, Spain} 

\email[]{felix.delteso\@@{}uam.es}

\urladdr{https://sites.google.com/view/felixdelteso}

\keywords{asymptotic Expansion, Mean Value Property, Dynamic Programming Principle, Game Theoretical, Two-Person Game, Viscosity Solution, $p$-Laplacian, Non-homogeneous $p$-Laplace poblem}

\author[J. D. Rossi]{Julio D. Rossi}
\address[J. D. Rossi]{Departamento de Matem\'atica, FCEyN, Universidad de Buenos Aires,
 Pabell\'on I, Ciudad Universitaria (1428),
Buenos Aires, Argentina.}
\email[]{jrossi@dm.uba.ar}

\subjclass[2020]{
35D40  	%Viscosity solutions to PDEs
35J92  	%Quasilinear elliptic equations with $p$-Laplacian
35B05  	%Oscillation, zeros of solutions, mean value theorems, etc. in context of PDEs
35Q91	%PDEs in connection with game theory
91A05  	%2-person games
 }

\begin{abstract}
We extend the classical mean value property for the Laplacian operator to address a nonlinear and non-homogeneous problem related to the $p$-Laplacian operator for $p>2$. 
Specifically, we characterize viscosity solutions to the $p$-Laplace equation $\Delta_p u \coloneqq \nabla\cdot(|\nabla u|^{p-2} \nabla u) = f$ with a nontrivial right-hand side $f$, through novel asymptotic mean value formulas. While asymptotic mean value formulas for the homogeneous case ($f = 0$) have been previously established, leveraging the normalization $\Delta_p^{\textup{N}} u\coloneqq |\nabla u|^{2-p} \Delta_p u = 0$, which yields the 1-homogeneous normalized $p$-Laplacian, 
such normalization is not applicable when $f \neq 0$. 
Furthermore, the mean value formulas introduced here motivate, for the first time in the literature, a game-theoretical approach for non-homogeneous $p$-Laplace equations. We also analyze the existence, uniqueness, and convergence of the game values, which are solutions to a dynamic programming principle derived from the mean value property.
\end{abstract}

\maketitle

\tableofcontents 

\addtocontents{toc}{\protect\setcounter{tocdepth}{1}}

\section{Introduction}\label{sec:intro}
The aim of this paper is to propose and study game-theoretical asymptotic expansions, asymptotic mean value properties and dynamic programming principles for the classical $p$-Laplacian:
\begin{align*}
    \Delta_p \varphi \coloneqq \nabla \cdot \left(\left|\nabla \varphi\right|^{p-2}\nabla \varphi\right).
\end{align*}

We begin by briefly reviewing the state of the art on this topic, which has been extensively studied over the last decade. The work by Manfredi, Parviainen, and Rossi \cite{MaPaRo10} is the first known result in this direction. They benefit from the identity
\begin{align*}
    \Delta_p\varphi = |\nabla \varphi|^{p-2} \nplap\varphi, \quad \text{with} \quad \nplap\varphi \coloneqq (p-2) \left\langle D^2\varphi \frac{\nabla \varphi}{|\nabla \varphi|}, \frac{\nabla \varphi}{|\nabla \varphi|}\right\rangle + \Delta \varphi,
\end{align*}
where $\nplap$ is the so-called \emph{normalized} $p$-Laplacian. This formulation is particularly useful due to the following fundamental property of $p$-harmonic functions:
\begin{align}\label{eq:equivalencepharmonic}
    \plap u = 0 \quad \text{if and only if} \quad \nplap u = 0.
\end{align}
In this way, they proposed an asymptotic expansion for $p$-harmonic functions via an asymptotic expansion of the normalized $p$-Laplacian:
\begin{theorem}[Manfredi-Parviainen-Rossi 2010]\label{thm:MaPaRo10}
    Let $d \in \mathbb{N}$, $p \geq 2$, $x\in \R^d$, and $\varphi \in C^2(B_R(x))$ for some $R > 0$, such that $\nabla \varphi(x) \neq 0$. For $r > 0$ small enough, define
    \begin{align*}
        \mathcal{M}_{r}[\varphi](x) \coloneqq \beta\left(\frac{1}{2} \sup_{ B_{\gamma r}(x)}\varphi + \frac{1}{2} \inf_{ B_{\gamma r}(x)}\varphi \right) + (1-\beta) \fint_{ B_{\gamma r}(x)} \varphi(y) \, \mathrm{d}y,
    \end{align*}
    where $\beta = (p-2)/(p+d)$ and $\gamma = \sqrt{2(p+d)}$. Then,
    \begin{align*}
        \mathcal{M}_r[\varphi](x) = \varphi(x) + r^2 \nplap \varphi(x) + o(r^2) \quad \text{as} \quad r \to 0^+.
    \end{align*}
    
   As a consequence, a continuous function $u$ satisfies $\Delta_p u (x) = 0$ in the viscosity sense if and only if $ u(x)  = \mathcal{M}_r[u] (x) + o(r^2)$
    in the viscosity sense.
\end{theorem}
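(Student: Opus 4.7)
The plan is to Taylor-expand $\varphi$ at $x$ to second order, evaluate the two building blocks of $\mathcal{M}_r[\varphi](x)$ separately, and then calibrate $\beta$ and $\gamma$ so that the weighted sum matches $\nplap\varphi(x)$. For the integral average, the linear term integrates to zero by the symmetry of the ball and the standard second-moment identity $\fint_{B_\rho(x)}(y-x)_i(y-x)_j\,\dd y = \frac{\rho^2}{d+2}\delta_{ij}$ yields
\begin{equation*}
\fint_{B_{\gamma r}(x)}\varphi(y)\,\dd y = \varphi(x) + \frac{\gamma^2 r^2}{2(d+2)}\Delta\varphi(x) + o(r^2).
\end{equation*}

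For the sup/inf term the hypothesis $\nabla\varphi(x)\neq 0$ is essential. Writing $\hat{v} = \nabla\varphi(x)/|\nabla\varphi(x)|$, for $r$ small enough the maximum of $\varphi$ on $\overline{B_{\gamma r}(x)}$ is attained at a boundary point $x+\gamma r\,v^+$ with $v^+ = \hat{v}+\cO(r)$, by stability of the linear-dominated optimizer under lower-order perturbations. Because $|v^+|=1$, the correction $v^+-\hat{v}$ is tangent to the unit sphere at $\hat{v}$ at leading order, hence orthogonal to $\nabla\varphi(x)$, so $\gamma r\,v^+\cdot\nabla\varphi(x) = \gamma r|\nabla\varphi(x)| + o(r^2)$. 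An analogous analysis for the minimizer, followed by averaging, cancels the first-order terms and gives
\begin{equation*}
\tfrac{1}{2}\sup_{B_{\gamma r}(x)}\varphi + \tfrac{1}{2}\inf_{B_{\gamma r}(x)}\varphi = \varphi(x) + \frac{\gamma^2 r^2}{2}\langle D^2\varphi(x)\hat{v},\hat{v}\rangle + o(r^2).
\end{equation*}

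Combining these two expansions,
\begin{equation*}
\mathcal{M}_r[\varphi](x) = \varphi(x) + r^2\Bigl[\tfrac{\beta\gamma^2}{2}\langle D^2\varphi(x)\hat{v},\hat{v}\rangle + \tfrac{(1-\beta)\gamma^2}{2(d+2)}\Delta\varphi(x)\Bigr] + o(r^2),
\end{equation*}
so matching with $r^2\nplap\varphi(x) = r^2\bigl[(p-2)\langle D^2\varphi\,\hat{v},\hat{v}\rangle + \Delta\varphi\bigr]$ forces $\frac{(1-\beta)\gamma^2}{2(d+2)}=1$ and $\frac{\beta\gamma^2}{2}=p-2$, whose unique solution is exactly $\gamma^2 = 2(p+d)$ and $\beta=(p-2)/(p+d)$. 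For the viscosity characterization the pointwise expansion transfers to the test-function definition: if $\varphi\in C^2$ touches $u$ from above (resp.\ below) at $x$ with $\nabla\varphi(x)\neq 0$, the expansion above combined with the equivalence \eqref{eq:equivalencepharmonic} converts the viscosity inequality $\plap\varphi(x)\geq 0$ (resp.\ $\leq 0$) into the corresponding asymptotic inequality for $\mathcal{M}_r[\varphi](x)$.

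The hard part is the sup/inf analysis: one must verify that the true maximizer/minimizer of $\varphi$ on $\overline{B_{\gamma r}(x)}$ is close enough to $x\pm\gamma r\,\hat{v}$ that the direction perturbation only contributes $o(r^2)$, so that the calibration at the $r^2$ level is not spoiled by a spurious $\cO(r^2)$ correction. A related subtlety, not addressed by the stated assumption $\nabla\varphi(x)\neq 0$, is the degenerate case $\nabla\varphi(x)=0$ for a test function in the viscosity argument, which is handled in the literature via the upper/lower semicontinuous envelopes of the operator.
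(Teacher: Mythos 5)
The paper does not prove this theorem: it is recalled verbatim from \cite{MaPaRo10}, so there is no in-paper proof to compare against. That said, your argument is essentially the standard one, and the calibration computation at the end — matching $\tfrac{\beta\gamma^2}{2}=p-2$ and $\tfrac{(1-\beta)\gamma^2}{2(d+2)}=1$ to recover $\beta=(p-2)/(p+d)$, $\gamma=\sqrt{2(p+d)}$ — is correct, as is the second-moment identity for the integral term.

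The one place your sketch leans on an unproved claim is the localization $v^+=\hat{v}+\cO(r)$. You correctly flag this as "the hard part," and the estimate is in fact needed for \emph{your} route: with only $v^+=\hat v+o(1)$ the linear term loses $\gamma r|\nabla\varphi(x)|\,(1-\hat v\cdot v^+)\sim r\cdot o(1)$, which is not $o(r^2)$; one really must show $|v^+-\hat v|=\cO(r)$, which can be done by a balancing argument (the linear loss $\sim r|w|^2$ beats the quadratic gain $\sim r^2|w|$ for $r$ small, forcing the optimal $|w|\lesssim r$). There is, however, a well-known trick that makes this entirely painless and only needs $v^+\to\hat v$: bound from below by comparing the max/min with the \emph{reflection} of the minimizer, i.e.\ use
\begin{equation*}
\tfrac{1}{2}\sup_{B_{\gamma r}(x)}\varphi+\tfrac{1}{2}\inf_{B_{\gamma r}(x)}\varphi
\;\geq\; \tfrac{1}{2}\varphi\bigl(2x-x_{\min}^r\bigr)+\tfrac{1}{2}\varphi\bigl(x_{\min}^r\bigr)
\;=\;\varphi(x)+\tfrac12\bigl\langle D^2\varphi(x)(x_{\min}^r-x),\,x_{\min}^r-x\bigr\rangle+o(r^2),
\end{equation*}
and symmetrically with the maximizer for the upper bound. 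The linear term then cancels \emph{identically} by symmetry, so only the quadratic term survives, and $x_{\min}^r-x=-\gamma r\,\hat v+o(r)$ (a soft consequence of $\nabla\varphi(x)\neq0$ and compactness) already suffices. This is the argument in \cite{MaPaRo10} and is cleaner than tracking the exact maximizer. Finally, your remark about $\nabla\varphi(x_0)=0$ in the viscosity characterization is a real subtlety, and indeed the present paper handles it differently than via semicontinuous envelopes: for $p>2$ one observes $\Delta_p\varphi(x_0)=0$ automatically when the gradient vanishes, so such test functions may simply be excluded (cf.\ Remark \ref{rem:novanishgrad} and Definition \ref{def:viscsol-p-Laplace} in Appendix \ref{app:viscositydef}).
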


Moreover, the above result allows for a game-theoretical interpretation of $p$-harmonic functions through the so-called \emph{tug-of-war with noise},
a two-players zero-sum game whose rules are the following: 
\begin{enumerate}[\noindent\rm (i)]
    \item Fix a parameter $r>0$, an open bounded domain $\Omega$, a starting point $x\in \Omega$ and a payoff function $g$ defined in $\R^d\setminus\Omega$.
    
       \medskip
    
    \item\label{tugogwarnoise-item2} Each turn, toss a biased coin with probabilities $\beta$ for heads and $1-\beta$ for tails.
    \begin{enumerate}[$\bullet$]
    \item If the result is tails, the new position of the game is chosen randomly in the ball of radius
 $\gamma r$.
 \item If the result is heads, a round of tug-of-war is played: a fair coin decides
which of the two players chooses the next position of the game inside the ball of radius
 $\gamma r$.
 \end{enumerate}
 
    \medskip
 
  \item The process described in \eqref{tugogwarnoise-item2} is repeated until the position of the game lies outside $\Omega$ for the first time (this position is denoted by $x_\tau$). When this happens, the second player pays the amount ~$g(x_\tau)$ to the first player.
\end{enumerate}
The value of the game $u_r:\R^d\to\R$ solves the following boundary value problem:
 \begin{align}\label{eq:DPPintrotugogwarwithnoise}
\left\{
\begin{aligned}
    u_r(x)&=   \mathcal{M}_{r}[u_r](x) &\textup{if}&\quad x\in  \Omega,\\
    u_r(x)&=g(x) &\textup{if}& \quad x\in \R^d\setminus\Omega.
\end{aligned}\right.
\end{align}
In the game-theoretical literature, the equation in \eqref{eq:DPPintrotugogwarwithnoise} is known as the Dynamic Programming Principle (DPP) of the game. 
It can be proved (see \cite{MaPaRo10,PSSW,PS}) that the value functions $u_{r}$ converge
uniformly as $r \to 0^+$ to the unique viscosity solution $u$ of $\Delta_p u = 0$ in $\Omega$
with $u=g$ on $\partial\Omega$.

One limitation of the above result is that it is only valid for $p$-harmonic functions, since the equivalence \eqref{eq:equivalencepharmonic} does not hold for nontrivial right-hand sides. Consequently, asymptotic mean value formulas for problems involving $\nplap$ do not yield, in general, asymptotic mean value formulas for problems involving $\plap$. An attempt to overcome this limitation was made by del Teso and Lindgren in \cite{dTLi21} (see also \cite{BuSq22}), where they present a direct asymptotic expansion for the $p$-Laplacian.

\begin{theorem}[del Teso-Lindgren 2021]\label{thm:dTLi21}
    Let $d \in \mathbb{N}$, $p \geq 2$, $x\in \R^d$, and $\varphi \in C^2(B_R(x))$ for some $R > 0$. For $r > 0$ small enough, define
    \begin{align*}
        \mathcal{L}_{r}[\varphi](x) \coloneqq \frac{\kappa_{d,p}}{r^p} \fint_{B_r} |\varphi(x+y) - \varphi(x)|^{p-2} (\varphi(x+y) - \varphi(x)) \, \mathrm{d}y,
    \end{align*}
    where $\kappa_{d,p}  =2(p+d)/(d\fint_{\partial_{B_1}}|y_1|^p \dd \sigma(y))$. Then,
    \begin{align*}
        \mathcal{L}_r[\varphi](x) = \plap \varphi(x) + o_r(1) \quad \text{as} \quad r \to 0^+.
    \end{align*}
    
    As a consequence, a continuous function $u$ satisfies $\Delta_p u (x) = f (x)$ in the viscosity sense if and only if $\mathcal{L}_r[u] (x) = f (x) + o(r^2)$ in the viscosity sense.
\end{theorem}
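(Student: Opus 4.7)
I would proceed by Taylor expansion and explicit evaluation of the resulting averaged integral, case-splitting on whether $\nabla\varphi(x)$ vanishes. Setting $a\coloneqq\nabla\varphi(x)$, $M\coloneqq D^2\varphi(x)$, and $\psi(y)\coloneqq\varphi(x+y)-\varphi(x) = a\cdot y + \tfrac{1}{2}\langle My,y\rangle + o(|y|^2)$, the natural substitution $y = rz$ with $z\in B_1$ combined with a Taylor expansion of the nonlinearity $F(t)\coloneqq|t|^{p-2}t$ around $t = r(a\cdot z)$ should give
$$F(\psi(rz)) = r^{p-1} F(a\cdot z) + \tfrac{(p-1)r^p}{2}|a\cdot z|^{p-2}\langle Mz, z\rangle + o(r^p),$$
where the leading $r^{p-1}$-term is odd in $z$ and vanishes after averaging over $B_1$. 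This reduces the proof to evaluating
$$\mathcal{L}_r[\varphi](x) = \frac{\kappa_{d,p}(p-1)}{2}\fint_{B_1}|a\cdot z|^{p-2}\langle Mz, z\rangle\dd z + o(1)$$
and identifying the integral with $\plap\varphi(x)$.

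\textbf{Evaluating the integral.} Assuming $a\neq 0$, I would align coordinates with $a$ by a rotation $S\in SO(d)$ sending $e_1$ to $a/|a|$; with $\widetilde M\coloneqq S^{\!\top}MS$ the integral becomes $|a|^{p-2}\fint_{B_1}|w_1|^{p-2}\langle\widetilde M w, w\rangle\dd w$. Reflection symmetries $w_i\mapsto -w_i$ annihilate off-diagonal entries, and rotational symmetry in $(w_2,\dots,w_d)$ forces $\widetilde M_{22},\dots,\widetilde M_{dd}$ to contribute equally. The integral collapses to a linear combination of the two scalar averages $I_1\coloneqq\fint_{B_1}|w_1|^p\dd w$ and $I_2\coloneqq\fint_{B_1}|w_1|^{p-2}w_2^2\dd w$, acting respectively on $\widetilde M_{11} = \langle Ma,a\rangle/|a|^2$ and on $\mathrm{tr}(\widetilde M)=\Delta\varphi(x)$. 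A polar-coordinate reduction gives $I_1 = \tfrac{d}{p+d}\fint_{\partial B_1}|y_1|^p\dd\sigma(y)$, and the divergence theorem applied to $V_1(w) = |w_1|^{p-2}w_1w_2^2 e_1$ and $V_2(w) = |w_1|^p w_2 e_2$ yields the crucial identity $I_1 = (p-1)I_2$. Combining these and recalling that $\plap\varphi = |\nabla\varphi|^{p-2}\Delta\varphi + (p-2)|\nabla\varphi|^{p-4}\langle D^2\varphi\nabla\varphi,\nabla\varphi\rangle$ shows that the integral is proportional to $\plap\varphi(x)$ with a prefactor that matches the prescribed $\kappa_{d,p}$ exactly.

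\textbf{Degenerate case and main obstacle.} If $\nabla\varphi(x) = 0$, then $|\psi(y)|\leq C|y|^2$, so $|F(\psi(y))|\leq C|y|^{2(p-1)}$ and $|\mathcal{L}_r[\varphi](x)|\leq Cr^{p-2}\to 0$ for $p>2$, matching $\plap\varphi(x)=0$ at such points (the case $p=2$ recovers the classical asymptotic mean value formula for $\Delta$). The main obstacle is making the Taylor expansion of $F$ rigorous \emph{uniformly} in $z\in B_1$: for $p\in(2,3)$ the function $F$ is not classically $C^2$, with the lack of regularity appearing precisely where $a\cdot z \approx 0$. The standard remedy is the quantitative estimate
$$|F(u+v) - F(u) - F'(u)v|\leq C_p\min\bigl\{|v|^2(|u|+|v|)^{p-3},\ |v|^{p-1}\bigr\},$$
which, inserted into the remainder and integrated over $B_r$, produces an error of order $r^{p+1}$ (for $p\geq 3$) or $r^{2p-2}$ (for $2\leq p\leq 3$), both $o(r^p)$ whenever $p>2$. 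The viscosity consequence then follows in the standard way by inserting $C^2$ test functions touching $u$ at $x$ and exploiting the monotonicity of $F$.
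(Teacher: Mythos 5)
This theorem is not proved in the present paper: it is quoted verbatim as Theorem~1.2 and attributed to del Teso--Lindgren (reference~[dTLi21]), with no internal proof to compare against. Evaluated on its own merits, your proposal is essentially the argument used in that reference and is correct. The key structural ingredients are all present and sound: the second-order Taylor expansion of $\psi(y) = \varphi(x+y)-\varphi(x)$; the Taylor expansion of $F(t)=|t|^{p-2}t$ about $t=r(a\cdot z)$, whose linear-in-$r$ leading term is odd and integrates to zero over $B_1$; the rotation to align $a$ with $e_1$ together with reflection and rotational symmetries to kill the off-diagonal terms and equalize the transverse ones; and the two scalar integrals $I_1$, $I_2$ whose relation $I_1=(p-1)I_2$ via the divergence theorem (and the polar evaluation $I_1=\tfrac{d}{p+d}\fint_{\partial B_1}|y_1|^p\,\dd\sigma$) produce exactly the $\Delta_p$ structure with the correct constant $\kappa_{d,p}$. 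Your treatment of the degenerate point $\nabla\varphi(x)=0$ (the raw bound $|\mathcal{L}_r[\varphi](x)|\lesssim r^{p-2}\to 0$ for $p>2$) and of the low-regularity range $p\in(2,3)$ via the pointwise remainder estimate $|F(u+v)-F(u)-F'(u)v|\leq C_p\min\{|v|^2(|u|+|v|)^{p-3},|v|^{p-1}\}$ are the right remedies, and the resulting error is indeed $o(r^p)$ in every regime. Two very minor remarks: first, the bound $r^{p-2}$ at a critical point does not tend to zero when $p=2$, but as you note the case $p=2$ is linear and has no degenerate case, so this is harmless; second, the viscosity consequence requires the $o(r^p)$ error to be locally uniform in $x$ for $C^2$ test functions (which is automatic here because the Taylor remainders of a fixed $C^2$ function are locally uniform), a point worth making explicit if this sketch were turned into a complete proof.
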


We observe that \Cref{thm:dTLi21} provides a framework to approximate the $p$-Laplacian and produce approximations in the form of dynamic programming principles (see \cite{dTLi21}). Nevertheless, the inability to express $\mathcal{L}_r[\varphi](x)$ in the form $A_{r}[\varphi](x) - \varphi(x)$, where $A_r$ is a monotone averaging operator, does not allow to provide a probabilistic interpretation of the $p$-Laplacian $\Delta_p$ derived from this asymptotic expansion.

The \textbf{main goal of this paper} is to obtain direct asymptotic expansions for the $p$-Laplacian having the following properties:
\begin{enumerate}[(i)]
\item They allow to characterize solutions of non-homogeneous problems related to the $p$-Laplacian through asymptotic mean value formulas.

\medskip

\item They have an associated Dynamic Programming Principle with a probabilistic game-theoretical interpretation.
\end{enumerate}

In order to explain the underlying ideas of the formulas and properties that will be displayed throughout the paper, let us introduce them in a simplified scenario and operate in a very informal way. 
Consider the problem $\plap u = 1$ for $p = 3$, that is
\begin{align}\label{eq:toyproblem}
\Delta_3 u(x) \coloneqq |\nabla u(x)| \Delta_3^{\textup{N}} u(x) = 1.
\end{align}
Note that we necessarily have $|\nabla u(x)| > 0$ and $\Delta_3^{\textup{N}} u(x) > 0$. It is easy to check (see Lemma \ref{lem:ident-crucial}) that, given $a,b \geq 0$, we have $a^{\frac{1}{2}}b^{\frac{1}{2}} = \inf_{c > 0}\left\{ \frac{c}{2} a + \frac{1}{2c}b \right\}$.
Thus, \Cref{eq:toyproblem} can be equivalently expressed as
\begin{equation}\label{eq:toyproblembis}
    \inf_{c > 0}\left\{ \frac{c}{2} |\nabla u(x)| + \frac{1}{2c}\Delta_3^{\textup{N}} u(x) \right\} = 1.
\end{equation}
Now, we use that for $\rho$ small we have 
$
|\nabla u(x)| \sim \frac{1}{\rho} (\sup_{B_{\rho} (x)}u - u(x))$
and that, from Theorem \ref{thm:MaPaRo10} with $r$ small, we obtain 
$\Delta_3^{\textup{N}} u(x) \sim \frac{1}{r^2} \left(\mathcal{M}_{r}[u](x)-u(x)\right) $
 to get
\begin{align*}
    \inf_{c > 0}\left\{ \frac12 \frac{c}{\rho} \left(\sup_{B_{\rho} (x)}u - u(x) \right)\! + \! \frac{1}{2} \frac{1}{cr^2} \left(\mathcal{M}_{r}[u](x)-u(x)\right)  
    \right\} \sim 1.
\end{align*}
In order to write the left-hand side of this asymptotic formula in the form of 
$A_{\veps}[u](x) - u(x)$, we just choose $\rho$ and $r$ depending on $c$ and a small 
parameter $\varepsilon>0$. More specifically, we take $\rho = c \varepsilon^2$ and $r = \varepsilon c^{-1/2}$ to
formally 
obtain the asymptotic expansion
$$
    A_{\veps}[u](x) \sim u(x) + \varepsilon^2 \quad \textup{with} \quad   A_{\veps}[u](x)\coloneqq\inf_{c > 0}\left\{ \frac{1}{2} \sup_{B_{\varepsilon^2 c}(x)} u + \frac{1}{2} \mathcal{M}_{\varepsilon c^{-1/2}}[u](x) \right\},
$$
and, hence, we arrive to  the desired asymptotic mean value formula
 \begin{equation} \label{0-4}
 A_{\veps}[u](x)= u(x) + \varepsilon^2 +o (\varepsilon^2)
\end{equation}
that formally characterizes solutions to \eqref{eq:toyproblem}.
The associated dynamic programming principle (DPP) for this asymptotic mean value formula just drops the error term and
reads as
\begin{equation} \label{0-5}
A_{\veps}[u](x)= u(x)+ \varepsilon^2.
\end{equation}

Now, let us describe two main properties of 
the asymptotic mean value formula \eqref{0-4} that allow us to
achieve the proposed goals. 
The left-hand side of the mean value formula 
\eqref{0-4} is monotone increasing with respect to $u$
and, in addition, the coefficients add up to 1, so they can play the role of conditional probabilities
 (this fact will be crucial to show that 
there exists a solution to the DPP for the game that we describe in the next section).

\subsection*{Main difficulties.} Let us discuss the main difficulties that arise when we 
want to make this argument rigorous.

\noindent $\bullet$  For a general $p>2$, and assuming that $f> 0$, we write the problem $\Delta_p u (x) = f(x)$ as
\begin{align*}
|\nabla u(x)|^{\frac{p-2}{p-1}} \left(\Delta_p^{\textup{N}} u(x)\right)^{\frac{1}{p-1}} = (f(x))^{\frac{1}{p-1}}.
\end{align*}
We can then reproduce the 
previous idea using the identity $a^{\alpha}b^{1-\alpha}=\inf_{c>0}\left\{\alpha c^{1-\alpha} a + (1-\alpha)c^{-\alpha} b\right\}$ (see \Cref{lem:ident-crucial}),  that is valid for $a,b \geq 0$ and $\alpha\in(0,1)$, by taking $\alpha=(p-2)/(p-1)$. However, when $p\in(1,2)$ we have that $(p-2)/(p-1)<0$ and this methodology does not work.

\noindent $\bullet$ We are performing approximations for $\rho = c \varepsilon^2$ and $r = \varepsilon c^{-1/2}$ small, but in the infimum we consider every $c > 0$. To tackle this,
we restrict $c$ and compute the infimum for $c\in [m(\varepsilon),  M(\varepsilon)]$ with
$m(\varepsilon) \to 0^+$ and $M(\varepsilon) \to + \infty$ as $\veps\to0^+$ (in order to cover the whole set
$(0,+\infty)$ in the limit as $\varepsilon\to 0^+$), and
$\varepsilon (m(\varepsilon))^{-1/2} \to 0^+$ and $\varepsilon^2 M(\varepsilon) \to 0^+$ as $\veps\to0^+$
(to make rigorous the asymptotic expansions). A suitable choice
for $m(\varepsilon)$ and $M(\varepsilon)$ is given by
formula \eqref{as:trunc} below. 
Observe that, when we compute the infimum for $m(\varepsilon) \leq c \leq M(\varepsilon)$ with
$\varepsilon (m(\varepsilon))^{-1/2} \to 0^+$ and $\varepsilon^2 M(\varepsilon) \to 0$,
we have that formulas \eqref{0-4} and \eqref{0-5} are \emph{local}. This holds since, as $\varepsilon \to 0^+$,
the radii of the involved balls where we consider values of $u$ goes to zero uniformly.

\noindent $\bullet$  Moreover, an extra difficulty comes from the fact that we want to
deal with solutions to 
$\Delta_3 u = |\nabla u| \Delta_3^{\textup{N}} u = f$ without assuming a sign condition on $f$. Indeed, the very first step in the previous formal computation writes the product
of two quantities as an infimum, $a^{\frac{1}{2}}b^{\frac{1}{2}} = \inf_{c > 0}\left\{ \frac{c}{2} a + \frac{1}{2c}b \right\}$, but this can not be used when $a$ or $b$ are negative
(the right-hand side is $-\infty$ and the left-hand side does not make sense). 
Hence, to handle this issue we need to introduce two different averaging operators 
$\mathcal{A}_\veps^+ [u]$ and $\mathcal{A}_\veps^- [u]$ (see \Cref{sec:mainpmayor2}).
Heuristically speaking, for a solution to the PDE, $|\nabla u(x)|$ is always nonnegative and $ \Delta_3^{\textup{N}} u(x)$
has the same sign of $f(x)$. Then, when $f(x)<0$ we will use the formula that holds for $-u$ (that is a solution
to the same equation with a change of sign). This change turns infimums into suppremums and viceversa in the previous
formula and give rise to two different mean value formulas
according to the sign of $f(x)$. 

\noindent $\bullet$  On top of these technicalities, solutions to $\Delta_p u = f$ are not in general $C^2$ but $C^{1,\alpha}$ with $\alpha<1$ (\cite{ATU1,ATU2}), and then
we cannot perform second order expansions of the solution. Hence, we need to
interpret the mean value formula and the corresponding PDE in the viscosity sense
(this fact already appeared in previous works \cite{BChMR,dTLi21,MaPaRo10}).

\noindent $\bullet$ The fact that we have two different formulas, $\mathcal{A}_\veps^+[u](x)$ and $\mathcal{A}_\veps^-[u](x)$, depending on the sign of $f(x)$ creates several difficulties when dealing with 
rigorous proofs. The main difficulty appears at points where $f(x)=0$, since at those points
we are forced to choose one of the two formulas, $\mathcal{A}_\veps^+[u](x)$ or $\mathcal{A}_\veps^-[u](x)$. 
The technical problem is tackled restricting ourselves to consider only test functions with non-vanishing gradient
at those points. 

\noindent $\bullet$ In addition, difficulties also arise when we prove the convergence of solutions to the DPP \eqref{0-5}
to solutions to the Dirichlet problem for the $p$-Laplacian. In this case we have to
remark that the DPP \eqref{0-5} is not consistent in the sense of the work \cite{BarSou} by Barles and Souganidis. This fact does not allow us 
to apply their classical  convergence results for viscosity solutions since we need to look carefully which test functions are admissible for our problem.

\noindent $\bullet$ Finally, let us point out that, when we aim to show the equivalence between
being a solution to the PDE $\Delta_p u = f$  and the mean value formula (and also for the convergence of the associated DPP),
we will need some control on the error term in the asymptotic expansion for a 
smooth function at points where $f=0$. 
Hence, we work with $C^3$ test functions, and prove a quantitative version of our previous results. We verify the equivalence
of being a smooth ($C^3$) sub or supersolution to the PDE with being a smooth
sub or supersolution to an asymptotic mean value formula. After this
we obtain the equivalence of being a viscosity solution to the PDE
with verifying an appropriate asymptotic mean value formula also in the viscosity sense.

\subsection*{Description of the main novelties.} The main novelties in this paper can be summarized as follows:

\noindent $\bullet$ For the first time in the literature, we obtain a
mean value formula that characterizes solutions to the non-homogeneous  $p$-Laplace problem and that is well suited to design a game whose value functions verify a Dynamic Programming Principle that is given by the
mean value formula dropping the error term. This task involves an operator that is nonlinear 
and homogeneous of degree $p-1$ (remark that the obtained mean value formula
is homogeneous of degree $1$ in $u$). 
In addition, associated with the
mean value formula, we can design a game whose
value functions converge uniformly to the unique solution to the Dirichlet problem
for the PDE $\Delta_p u = f$. Therefore, here we introduce a new approach to
find game theoretical mean value properties that is flexible enough to handle
operators that are not $1-$homogeneous.

\noindent $\bullet$ Partial differential equations and probability are closely related areas of mathematics. 
This relation goes back to the fact that harmonic functions and martingales both satisfy
mean value properties. 
The asymptotic mean value formulas that we found are of the form 
$$u(x) = \mathcal{A}_\varepsilon [u](x) - \varepsilon^2 J_p (f(x))+o(\varepsilon^2),$$
see \eqref{0-4} and Corollary \ref{teo-f>0-intro-bis} below. Here, $\mathcal{A}_\varepsilon$
is an averaging operator that is monotone increasing in $u$
and, moreover, the coefficients add up to 1 and $J_p$ is the signed $\frac{1}{p-1}$ power, $J_p(\xi):=
\textup{sign}(\xi) (\textup{sign}(\xi) \xi )^{\frac{1}{p-1}}$. Then, we can consider solutions to 
the equation
$$u_\varepsilon(x) = \mathcal{A}_\varepsilon [u_\varepsilon](x) - \varepsilon^2 J_p (f(x))$$
inside a domain $\Omega$ with a  fixed outter datum $u_\varepsilon (x) = g(x)$ for $x\in \mathbb{R}^d \setminus 
\Omega$. This equation is the dynamic programming principle DPP associated to a game
(whose rules can be intuitively deduced from the form of the operator $\mathcal{A}_\varepsilon$).
Here we also decribe this game and show that its value function converges as $\varepsilon \to 0^+$
to the solution to $\Delta_p u(x) = f(x)$ in $\Omega$, with $u (x) = g(x)$ on $\partial 
\Omega$. This is the first probabilistic game-theoretical interpretation for non-homogeneous $p$-Laplace problems.

\noindent $\bullet$ The obtained mean value formulas reproduce 
the diffusive character of the $p$-Laplacian. In fact, the diffusion coefficient in the $p$-Laplacian, $\mbox{div} (|\nabla u|^{p-2} \nabla u)$,
is given by $|\nabla u|^{p-2}$, and then the diffusion is large or small according to the size of the gradient. 
The mean value formula and the dynamic programming principle found here reproduce this property. In fact, if we go back to the formula
that we found for $\Delta_3 u = 1$, \eqref{0-4}, we observe that,
when 
the gradient of the function $u$ is large, we have to choose the constant $c$ 
small in order to make the term $\sup_{B_{\varepsilon^2 c}(x)} u -u(x)$ small (recall that we 
aim for an infimum in $c$ in \eqref{0-4} and in \eqref{0-5}). This forces that the radii of the balls where we
play tug-of-war with noise is large (since it is given by $\varepsilon c^{-1/2}$ with $c$ small). 
That is, we have that, the bigger the gradient is, the larger the balls are when
playing tug-of-war with noise. Conversely, when the gradient of $u$ is small, it will be better to choose 
$c$ large and hence the balls in the diffusive part of the mean value formula will be small.

\subsection*{Comments on related literature}
Now, let us briefly comment on previous results concerning mean value formulas. 
It is well known that there exists a mean value formula that characterizes harmonic functions. Specifically, a function $u$ is harmonic 
(i.e., a solution to $\Delta u= \nabla\cdot(\nabla u) = 0$) 
if and only if $u(x)$ equals the mean value of $u$ inside every 
ball $B_\varepsilon(x)$ centered at $x$. A weaker statement, known as the asymptotic mean value property, 
dates back to \cite{Blaschke,Privaloff}. This property states that $u$ is harmonic if and only if 
\[
u(x) = \fint_{B_\varepsilon(x)} u(y)\, \dd y + o(\varepsilon^2).
\]
Extensions of these ideas to the classical Poisson equation $-\Delta u = f$ yield the formula
\[
u(x) = \fint_{B_\varepsilon(x)} u(y)\, \dd y + \frac{\varepsilon^2}{2(n+2)} f(x) + o(\varepsilon^2).
\]
Mean value formulas for operators other than the Laplacian have also been studied. For instance, \cite{Littman.et.al1963} establishes a mean value theorem for linear divergence form operators with bounded measurable coefficients. A simpler statement, expressed in terms of mean value sets, was obtained in \cite{BH2015,Caffarelli.Roquejoffre.2007}. 
For results concerning mean value properties in the sub-Riemannian setting, see \cite{BoLan}.
An extension to general mean value formulas in non-Euclidean spaces can be found in 
\cite{cur}.

The extension of the linear theory to nonlinear operators is relatively recent. As mentioned earlier, a nonlinear mean value property for $p$-harmonic functions was first introduced in \cite{MaPaRo10,PSSW,PS}. For further references and more general equations, see also \cite{TPSS,KAWOHL2012173,Arroyo,BH2015,LiuSchikorra2015,dTMP,HansHartikainen,Angel.Arroyo.Tesis,I, ArroyoHeinoParviainen2017,DoMaRiSt,Lewicka2017,MiRo24,Gonzalvez2024} and the recently published book \cite{BlancandRossi2019}. 
For mean value properties related to the $p$-Laplacian in the Heisenberg group, refer to \cite{LMan,DoMaRiSt22}, while for the standard variational $p$-Laplacian, see the previously quoted work \cite{dTLi21}. For mean value formulas concerning solutions to more general elliptic problems, including Monge-Ampère and $k$-Hessian equations, we refer to \cite{BChMR,BChMR2}. Finally, this topic has also been addressed in the nonlocal literature (see \cite{BuSq22,dTEnLe22,Lew22,dTMeOc24}).

Concerning the interplay between game theory and partial differential equations there is a large literature
dealing with linear problems. Recently, motivated by \cite{PSSW} where the authors
introduce and analyze the tug-of-war game related to the infinity Laplacian, a number of
papers deal with more general nonlinear equations and different
problems including, for example, the 
obstacle problem. We refer to 
\cite{TPSS,Chandra,dTMP,LeMa,PSSW,PS} and the books \cite{BlancandRossi2019} and \cite{Lew20}. 
We highlight that none of these references deal with the non-homogeneous $p$-Laplace problem.

\medskip

\subsection*{Organization of the paper.} The paper is organized as follows: In Section \ref{sect-main}
we describe and state precisely our main results; in Section \ref{sect-asymp} we analyze asymptotic expansions
for the $p$-Laplacian and prove the characterization of viscosity
solutions to the PDE in terms of the asymptotic mean value formula; in Section \ref{sect-dynamic} we deal
with existence, uniqueness and convergence as $\varepsilon \to 0^+$ of solutions 
to the Dynamic Programming Principle (DPP); in Section \ref{sec:probabilitythings} we analyze the game associated with
our mean value formula and show that the game has a value that coincides with the 
unique solution to the DPP; finally, we include two appendixes, in the first one, Appendix \ref{ApA}, we include proofs
of some arithmetic-geometric identities and in the second one, Appendix \ref{app:viscositydef}, we discuss the definition of being a viscosity solution both to the PDE and to the
asymptotic mean value formula. 

\subsection*{Notations.} Given $p \in (2,\infty)$
and $d\in \mathbb{N}$, we introduce the constants
$$\beta=(p-2)/(p+d), \qquad \gamma=\sqrt{2(p+d)} \qquad \mbox{and} \qquad \alpha=(p-2)/(p-1).$$
We will also use $J_p$ to denote the signed $\frac{1}{p-1}$ power, that is, 
\[
 J_p(\xi)\coloneqq\left\{
 \begin{array}{ll}
 \xi^{\frac{1}{p-1}} \quad &\textup{if} \quad \xi\geq0,\\
 -(-\xi)^{\frac{1}{p-1}} \quad  &\textup{if} \quad \xi\leq0.
 \end{array}
 \right.
 \]

\section{Main results} \label{sect-main}

In this section we include the precise statements of our results.

\subsection{Asymptotic expansions and mean value properties} \label{sec:mainpmayor2}

Our first set of main results regards asymptotic expansions and mean value properties for the $p$-Laplacian in the range $p\in(2,\infty)$. Let us carefully introduce all the ingredients before stating the results. As we have mentioned, to make all the computations of this paper rigorous, we need to reduce the set for the infimum in \eqref{eq:toyproblembis} from $\{c>0 \}$ to a compact set. Let us define the following functions:
\begin{equation}\label{as:trunc}\tag{$\textup{M-m}$}
    \textup{Let $m,M:\R_+ \to \R_+$ be defined by $m(\veps)=\veps^{\frac{2}{2+\alpha}}$ and $M(\veps)=\veps^{-\frac{2}{2-\alpha}}$}.
\end{equation}
Let us also consider the following averaging operators, defined for $\veps>0$, $x\in \R^d$, and a bounded Borel function $\varphi:B_R(x)\to \R$ for some $R>0$ large enough: 
\begin{align}\label{def:A+}
    \mathcal{A}_\veps^+[\varphi](x)\coloneqq \inf_{c\in [m(\veps),M(\veps)]}\left\{\alpha \sup_{B_{\veps^2c^{1-\alpha}}(x)} \varphi + (1-\alpha) \mathcal{M}_{\veps c^{-\frac{\alpha}{2}}}[\varphi](x) \right\} 
\end{align}
and
\begin{align}\label{def:A-}
    \mathcal{A}_\veps^-[\varphi](x)\coloneqq\sup_{c\in [m(\veps),M(\veps)]}\left\{\alpha \inf_{B_{\veps^2c^{1-\alpha}}(x)} \varphi + (1-\alpha) \mathcal{M}_{\veps c^{-\frac{\alpha}{2}}}[\varphi](x) \right\}. 
\end{align}

We are now ready to state our first main result. It provides an asymptotic expansion of the $p$-Laplacian both in an asymptotic form for $C^2$ functions and in a quantitative way for $C^3$ functions. Moreover, while the asymptotic formula may fail for $C^2$ functions when the $p$-Laplacian vanishes
 (due to the lack of control of the error term), this problem is avoided when applied to $C^3$ functions.

\begin{theorem}\label{thm:main-p-greater2}
    Let $d\in \mathbb{N}$, $p>2$, $\veps_0\in (0,1)$, $x\in \R^d$, and $\varphi\in C^2(B_{\veps_0}(x))$ such that $\nabla\varphi(x)\not=0$. Assume \eqref{as:trunc}. The following assertions hold: 
    \begin{enumerate}[\rm (a)]
    \item\label{thm:main-p-greater2-item1}  There exists $\overline{\veps}\in(0,\veps_0)$ such that, for all $\veps\in(0,\overline{\veps})$, the following quantity is well defined:
    \[
 \mathcal{A}_\veps[\varphi](x)\coloneqq\left\{\begin{aligned}
 \mathcal{A}_\veps^+[\varphi](x) \quad \textup{if} \quad \Delta_p\varphi(x)\geq0,\\
  \mathcal{A}_\veps^-[\varphi](x) \quad \textup{if} \quad \Delta_p\varphi(x)<0.
 \end{aligned}\right.
\]
\item\label{thm:main-p-greater2-item2} If $\Delta_p\varphi(x)\not=0$, then we have the following asymptotic expansion of the $p$-Laplacian:
\[
\mathcal{A}_\veps[\varphi](x)=\varphi(x)+\veps^2 J_p(\Delta_p\varphi(x)) +o(\veps^2) \quad \textup{as} \quad \veps\to0^+.
\]
\item\label{thm:main-p-greater2-item3} If, additionally, $\varphi\in C^3(B_{\veps_0}(x))$, then, for all $\veps\in(0,\overline{\veps})$, we have that
\[
\mathcal{A}_\veps[\varphi](x)=\varphi(x)+\veps^2 J_p(\Delta_p\varphi(x)) + \veps^2 E(\varphi,\veps),
\]
with \begin{align*}
|E(\varphi,\veps)|\leq&  K_1 \|D^2\varphi\|_{L^\infty(B_{\overline{\veps}}(x))} \veps^{\frac{2(p-2)}{p}} + K_2\left(\|\nabla\varphi\|_{L^\infty(B_{\overline{\veps}}(x))}+\|D^{3}\varphi\|_{L^\infty(B_{\overline{\veps}}(x)) }+ \frac{|D^2\varphi(x)|^2}{|\nabla\varphi(x)|}\right) \veps^{\frac{2}{3p-4}}.
\end{align*}
where $K_1$ and $K_2$ are positive constants depending only on $p$ and $d$.
    \end{enumerate}
\end{theorem}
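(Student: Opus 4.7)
The overall approach combines the variational identity
$$a^\alpha b^{1-\alpha} = \inf_{c>0}\bigl\{\alpha c^{1-\alpha} a + (1-\alpha)c^{-\alpha} b\bigr\},\qquad a,b\geq 0,\ \alpha\in(0,1),$$
from \Cref{lem:ident-crucial} with the Manfredi--Parviainen--Rossi expansion of $\mathcal{M}_r$ (\Cref{thm:MaPaRo10}) and the first-order Taylor expansion of $\sup_{B_\rho}\varphi$. With $\alpha=(p-2)/(p-1)$ and $\Delta_p\varphi=|\nabla\varphi|^{p-2}\Delta_p^{\textup{N}}\varphi$ one has the key algebraic identity $J_p(\Delta_p\varphi(x))=|\nabla\varphi(x)|^\alpha J_p(\Delta_p^{\textup{N}}\varphi(x))$, and the substitutions $\rho=\veps^2 c^{1-\alpha}$, $r=\veps c^{-\alpha/2}$ are engineered precisely so that, after minimising in $c$, the identity reconstructs $\veps^2 J_p(\Delta_p\varphi(x))$ from the two averages.

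For (a), since $1-\alpha>0$ the involved radii satisfy
$$\sup_{c\in[m(\veps),M(\veps)]}\veps^2 c^{1-\alpha}=\veps^{2/(2-\alpha)}\to 0,\qquad \sup_{c\in[m(\veps),M(\veps)]}\veps\, c^{-\alpha/2}=\veps^{2/(2+\alpha)}\to 0,$$
so for $\veps<\overline\veps$ every ball appearing in \eqref{def:A+}--\eqref{def:A-} sits inside a neighborhood of $x$ where $\nabla\varphi$ stays bounded away from zero by continuity; hence both $\mathcal{A}_\veps^\pm[\varphi](x)$ are finite and the sign of $\Delta_p\varphi(x)$ unambiguously selects $\mathcal{A}_\veps[\varphi](x)$. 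For (b), assume first $\Delta_p\varphi(x)>0$, so $\Delta_p^{\textup{N}}\varphi(x)>0$. Taylor's theorem on the critical-point-free ball, together with \Cref{thm:MaPaRo10}, yields
$$\alpha\!\!\sup_{B_{\veps^2 c^{1-\alpha}}(x)}\!\!\varphi + (1-\alpha)\mathcal{M}_{\veps c^{-\alpha/2}}[\varphi](x) = \varphi(x) + \veps^2\bigl[\alpha c^{1-\alpha}|\nabla\varphi(x)| + (1-\alpha) c^{-\alpha}\Delta_p^{\textup{N}}\varphi(x)\bigr] + o(\veps^2).$$
The unconstrained minimizer is the fixed positive constant $c^\ast=\Delta_p^{\textup{N}}\varphi(x)/|\nabla\varphi(x)|$, which sits inside $[m(\veps),M(\veps)]$ for $\veps$ small (as $m(\veps)\to 0^+$, $M(\veps)\to +\infty$); hence the constrained infimum agrees with the unconstrained one and the identity yields the desired expansion. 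The case $\Delta_p\varphi(x)<0$ follows from the reflection $\mathcal{A}_\veps^-[\varphi]=-\mathcal{A}_\veps^+[-\varphi]$ (since $\mathcal{M}_r$ is odd and $\sup(-\varphi)=-\inf\varphi$).

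For the quantitative statement (c), I would promote each expansion to its $C^3$-quantitative form. Taylor's theorem supplies an $O(\rho^2\|D^2\varphi\|_\infty)+O(\rho^3(\|D^3\varphi\|_\infty+|D^2\varphi(x)|^2/|\nabla\varphi(x)|))$ remainder in the expansion of $\sup_{B_\rho}\varphi$, the $|D^2\varphi(x)|^2/|\nabla\varphi(x)|$ term arising because the argmax on $\partial B_\rho(x)$ is displaced from $x+\rho\nabla\varphi(x)/|\nabla\varphi(x)|$ by an amount of order $\rho^2|D^2\varphi(x)|/|\nabla\varphi(x)|$ (implicit function theorem). The quantitative MPR expansion adds an $O(r^3\|D^3\varphi\|_\infty)$ remainder, and the constrained infimum on $[m(\veps),M(\veps)]$ differs from the unconstrained one by a $\|\nabla\varphi\|_\infty$-controlled boundary contribution whenever $c^\ast$ escapes the interval. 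The two worst-case evaluations
$$\veps^4 M(\veps)^{2(1-\alpha)}\|D^2\varphi\|_\infty= \veps^{2+2(p-2)/p}\|D^2\varphi\|_\infty,\qquad \veps^3 m(\veps)^{-3\alpha/2}\|D^3\varphi\|_\infty = \veps^{2+2/(3p-4)}\|D^3\varphi\|_\infty,$$
reproduce after division by $\veps^2$ the two exponents in the stated bound, the remaining terms being of the same or lower order; the explicit cutoffs \eqref{as:trunc} are chosen precisely to keep all errors simultaneously balanced.

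The main obstacle I anticipate is the uniformity of the remainders in $c\in[m(\veps),M(\veps)]$, together with the requirement that the estimates not degenerate when $|\nabla\varphi(x)|$ is small. The term $|D^2\varphi(x)|^2/|\nabla\varphi(x)|$ in the final bound is the trace of this last issue: locating the argmax of $\varphi$ on $\partial B_\rho(x)$ requires inverting a map whose Lipschitz constant is controlled by $|\nabla\varphi|^{-1}$, and this is precisely why both the $C^3$ hypothesis and the careful choice \eqref{as:trunc} of $m,M$ are essential to the argument.
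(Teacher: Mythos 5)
Your proposal is correct and follows essentially the same route as the paper: rewrite $J_p(\Delta_p\varphi(x))$ via the arithmetic--geometric-mean identity of \Cref{lem:ident-crucial} (and its truncated form \Cref{lem:gm-am-aprox}), substitute the asymptotic expansions of $\sup_{B_\rho}\varphi$ and of $\mathcal{M}_r[\varphi]$ with $\rho=\veps^2 c^{1-\alpha}$, $r=\veps c^{-\alpha/2}$, and track the worst-case errors over $c\in[m(\veps),M(\veps)]$. Two small corrections to the narrative are worth noting. First, the well-definedness in part (a) does not invoke $\nabla\varphi(x)\neq 0$ at all: once the radii $\veps^2 M(\veps)^{1-\alpha}=\veps^{2/(2-\alpha)}$ and $\gamma\veps\, m(\veps)^{-\alpha/2}=\gamma\veps^{2/(2+\alpha)}$ fit inside $B_{\veps_0}(x)$, the averages are finite simply because $\varphi$ is bounded and Borel there; the non-vanishing gradient is only needed for the expansions in (b) and (c). Second, the $|D^2\varphi(x)|^2/|\nabla\varphi(x)|$ contribution in (c) does not come from the displaced argmax in the $\sup$ expansion --- the paper's quantitative gradient estimate (\Cref{lem:asexp-gradmod}) has error $\lesssim \|D^2\varphi\|\, r$ with no $|\nabla\varphi|^{-1}$ dependence --- but from the quantitative MPR expansion of $\mathcal{M}_r[\varphi]$ (\Cref{lem:asexp-plapnorm}, taken from Lewicka's book). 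Your exponent bookkeeping, $\veps^2 M(\veps)^{2(1-\alpha)}=\veps^{2(p-2)/p}$ and $\veps\, m(\veps)^{-3\alpha/2}=\veps^{2/(3p-4)}$, and the observation that \Cref{lem:gm-am-aprox} is exactly what controls the truncation when the optimal $c^\ast$ escapes $[m(\veps),M(\veps)]$ (which can happen in (c) when $\nplap\varphi(x)=0$), match the paper's Proposition 3.5.
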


\begin{remark}
    We can make the choice $\mathcal{A}_\veps[\varphi](x)=\mathcal{A}_\veps^-[\varphi](x)$ if $\Delta_p\varphi(x)=0$ and the above result remains true.
\end{remark}

 Let us note that the above asymptotic expansion holds only when $\nabla\varphi(x)\not=0$ (since \Cref{thm:MaPaRo10} cannot be used to approximate $\nplap\varphi(x)$). This is due to the fact that  $\nplap\varphi(x)$ is not well defined when $\nabla\varphi(x)=0$.  Even with this limitation,  the above results still provide  asymptotic mean value characterizations of viscosity solutions to the non-homogeneous $p$-Laplace equation when $p>2$. Such result follows directly from the following characterization of smooth sub and supersolutions.

\begin{theorem}\label{thm:subsupersmooth-bis}
      Let $d\in \mathbb{N}$, $p>2$, $x\in \R^d$, $\varphi\in C^3(B_{R}(x))$ for some $R>0$, and $f\in C(B_{R}(x))$ such that $f(x)\geq0$. Additionally, assume that both $\nabla\varphi(x)\not=0$ and $\Delta_p\varphi(x)\not=0$ if $f(x)=0$.  Assume also that $\veps>0$ and that \eqref{as:trunc} holds.
Then, 
\begin{align*}
    \begin{aligned}
        &\Delta_p \varphi(x)\leq f(x) \quad  (\textup{resp. } \Delta_p \varphi(x)\geq f(x) )
    \end{aligned}
\end{align*}
if and only if 
\begin{align*}
    \begin{aligned}
        &\mathcal{A}_\veps^+[\varphi](x)\leq \varphi(x)+\veps^2 J_p(f(x)) +o(\veps^2)   \quad 
        (\textup{resp. }\mathcal{A}^+_\veps[\varphi](x)\geq \varphi(x)+\veps^2 J_p(f(x)) +o(\veps^2) ) \quad \textup{as} \quad \veps\to0^+.
    \end{aligned}
\end{align*}
The same result holds if $f(x)\leq 0$  replacing $\mathcal{A}_\veps^+$ by $\mathcal{A}_\veps^-$.
\end{theorem}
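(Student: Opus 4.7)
The plan is to prove the biconditional by a case analysis on the sign of $\Delta_p\varphi(x)$, invoking the asymptotic expansion of \Cref{thm:main-p-greater2} whenever it applies and supplying a direct Taylor-type upper bound in the remaining case. I will treat the statement with $\mathcal{A}_\veps^+$ and $f(x)\geq 0$; the dual version with $\mathcal{A}_\veps^-$ and $f(x)\leq 0$ follows by replacing $\varphi$ with $-\varphi$ and using that $J_p$ is odd.

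For the forward direction (assuming $\Delta_p\varphi(x)\leq f(x)$), the case $\Delta_p\varphi(x)>0$ is immediate: $\mathcal{A}_\veps^+[\varphi](x)$ then coincides with the operator $\mathcal{A}_\veps[\varphi](x)$ of \Cref{thm:main-p-greater2}, so part \eqref{thm:main-p-greater2-item2} gives
\[
\mathcal{A}_\veps^+[\varphi](x) = \varphi(x) + \veps^2 J_p(\Delta_p\varphi(x)) + o(\veps^2),
\]
and the monotonicity of $J_p$ closes the argument. The case $\Delta_p\varphi(x)=0$---which by the standing hypothesis forces $f(x)>0$---is handled by part \eqref{thm:main-p-greater2-item3}: together with $J_p(0)=0$ and the fact that the bound on $E(\varphi,\veps)$ tends to zero, it yields $\mathcal{A}_\veps^+[\varphi](x)=\varphi(x)+o(\veps^2)$, strictly below the target $\varphi(x)+\veps^2 J_p(f(x))+o(\veps^2)$. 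The delicate case is $\Delta_p\varphi(x)<0$, which is not covered by \Cref{thm:main-p-greater2} because $\mathcal{A}_\veps$ there equals $\mathcal{A}_\veps^-$, not $\mathcal{A}_\veps^+$, on this sign. Here I would evaluate the infimum in the definition of $\mathcal{A}_\veps^+$ at the boundary value $c=m(\veps)=\veps^{2/(2+\alpha)}$ and insert the $C^3$ expansions $\sup_{B_r(x)}\varphi=\varphi(x)+r|\nabla\varphi(x)|+O(r^2)$ and $\mathcal{M}_{r}[\varphi](x)=\varphi(x)+r^2\nplap\varphi(x)+O(r^3)$ provided by \Cref{thm:MaPaRo10}. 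A direct computation then yields
\[
\mathcal{A}_\veps^+[\varphi](x) - \varphi(x) \leq (1-\alpha)\,\veps^{4/(2+\alpha)}\nplap\varphi(x) + o\bigl(\veps^{4/(2+\alpha)}\bigr);
\]
since $\nplap\varphi(x)$ inherits the negative sign of $\Delta_p\varphi(x)$ and $4/(2+\alpha)<2$, the leading correction dominates the nonnegative target $\veps^2 J_p(f(x))$ and the desired inequality holds with considerable slack.

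The reverse implication is proved by contraposition using the same case split, and the corresponding supersolution equivalence (the $\geq$ direction) follows by the same scheme: part \eqref{thm:main-p-greater2-item2} handles $\Delta_p\varphi(x)>0$, part \eqref{thm:main-p-greater2-item3} rules out $\Delta_p\varphi(x)=0$ (via $\mathcal{A}_\veps^+[\varphi](x)=\varphi(x)+o(\veps^2)$), and the direct bound above rules out $\Delta_p\varphi(x)<0$. The main obstacle is precisely this last case: because $\mathcal{A}_\veps^+$ is not the ``natural'' operator for the sign $\Delta_p\varphi(x)<0$, \Cref{thm:main-p-greater2} provides no direct expansion, and one must exploit the specific truncation $m(\veps)$ in \eqref{as:trunc}, chosen so that the boundary evaluation produces a correction of magnitude $\veps^{4/(2+\alpha)}\gg \veps^2$, more than enough to absorb the target. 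A secondary technical point is the case $\Delta_p\varphi(x)=0$: here the asymptotic part \eqref{thm:main-p-greater2-item2} is not available, and the quantitative estimate \eqref{thm:main-p-greater2-item3}---which requires the $C^3$ hypothesis---is essential for controlling the error against $\veps^2 J_p(f(x))>0$.
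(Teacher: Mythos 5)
Your overall scheme is sound and closely parallels the paper's (both split on the sign of $\Delta_p\varphi(x)$, both dispatch $\Delta_p\varphi(x)>0$ via part~\eqref{thm:main-p-greater2-item2}, both evaluate the infimum at $c=m(\veps)$ to handle $\Delta_p\varphi(x)<0$, and both dismiss $\Delta_p\varphi(x)=0$ using quantitative control; the paper's boundary evaluation at $c=m(\veps)$ produces exactly the $m(\veps)^{-\alpha}=\veps^{-2\alpha/(2+\alpha)}$ rate you compute). Your verification that the sup-term is $o(\veps^{4/(2+\alpha)})$ and that $4/(2+\alpha)<2$ forces the $\nplap$-term to dominate is correct and is the same mechanism the paper exploits.

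However, there is a genuine gap in your treatment of the case $\Delta_p\varphi(x)=0$. You invoke part~\eqref{thm:main-p-greater2-item3} to conclude $\mathcal{A}_\veps^+[\varphi](x)=\varphi(x)+o(\veps^2)$, but that quantitative estimate is proved only under the hypothesis $\nabla\varphi(x)\not=0$ — indeed its error bound contains the factor $|D^2\varphi(x)|^2/|\nabla\varphi(x)|$, which is meaningless when the gradient vanishes. For $p>2$ the implication runs the wrong way: $\nabla\varphi(x)=0$ forces $\Delta_p\varphi(x)=0$, not conversely, so the sub-case $\Delta_p\varphi(x)=0$ with $\nabla\varphi(x)=0$ is genuinely possible (it forces $f(x)>0$ by the standing hypothesis, but nothing more). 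Your proposal never closes this sub-case, either in the forward direction for the subsolution inequality or in the contraposition for the supersolution one. The paper handles it by a direct elementary estimate: evaluating the infimum in $\mathcal{A}_\veps^+$ with the crude bounds $\sup_{B_r(x)}\varphi-\varphi(x)\leq\delta r$ (possible for any $\delta>0$ when $\nabla\varphi(x)=0$) and $|\mathcal{M}_r[\varphi](x)-\varphi(x)|\leq K r^2$, one gets
\[
\frac{\mathcal{A}_\veps^+[\varphi](x)-\varphi(x)}{\veps^2}\ \leq\ \inf_{c\in[m(\veps),M(\veps)]}\bigl\{\alpha c^{1-\alpha}\delta+(1-\alpha)c^{-\alpha}K\bigr\}\ =\ \delta^{\alpha}K^{1-\alpha}+o_\veps(1),
\]
and since $\delta$ is arbitrary the left-hand side is $\leq o(1)$, which both gives the subsolution inequality with slack and contradicts the supersolution inequality once $J_p(f(x))>0$. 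You would need to add this elementary computation (or an equivalent one) to make the argument complete.
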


Let us finally fix the notation
 \[
 \overline{\mathcal{A}}_\veps[\varphi;f](x)\coloneqq\left\{\begin{aligned}
 \mathcal{A}_\veps^+[\varphi](x) \quad \textup{if} \quad f(x)\geq0,\\
  \mathcal{A}_\veps^-[\varphi](x) \quad \textup{if} \quad f(x)<0,
 \end{aligned}\right. \quad \textup{and} \quad   \underline{\mathcal{A}}_\veps[\varphi;f](x)\coloneqq\left\{\begin{aligned}
 \mathcal{A}_\veps^+[\varphi](x) \quad \textup{if} \quad f(x)>0,\\
  \mathcal{A}_\veps^-[\varphi](x) \quad \textup{if} \quad f(x)\leq 0.
 \end{aligned}\right.
\]

We are now ready to formulate the second main result: We provide two asymptotic mean value characterizations of viscosity solutions to non-homogeneous $p$-Laplace problems. 

\begin{corollary}[Asymptotic mean value property] \label{teo-f>0-intro-bis} 
       Let $d\in \mathbb{N}$, $p>2$, $\Omega\subset\R^d$ be an open set, and $f\in C(\Omega)$.  Assume also that $\veps>0$ and that \eqref{as:trunc} holds. Let $\mathcal{A}_\veps[\varphi;f]$ be defined by either $\overline{\mathcal{A}}_\veps[\varphi;f]$ or $\underline{\mathcal{A}}_\veps[\varphi;f]$. 
Then, $u$ is a viscosity solution to 
\[
\Delta_p u(x)=f(x) \quad \textup{for} \quad x\in \Omega,
\]
if and only $u$ is a viscosity solution to
\begin{align*}
\begin{aligned}
&u(x) =\mathcal{A}_\veps[u;f](x) 
- \veps^2 J_p(f(x)) +o(\veps^2)  \quad \textup{for} \quad x\in \Omega, \quad \textup{as} \quad \veps\to0^+.
\end{aligned}
\end{align*}
\end{corollary}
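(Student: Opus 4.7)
\textbf{Proof plan for \Cref{teo-f>0-intro-bis}.} The plan is to derive the corollary as an essentially immediate transfer of \Cref{thm:subsupersmooth-bis} from smooth functions to the viscosity level. The skeleton of the argument is the one familiar from \cite{MaPaRo10,dTLi21}: the smooth equivalence, applied to each $C^3$ test function that touches $u$ from above or below, yields the mean value inequality for that very test function, which is precisely the content of the viscosity definition for the asymptotic formula (as spelled out in \Cref{app:viscositydef}).

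First, I would recall the viscosity notion used on both sides. A function $u$ is a viscosity subsolution (resp.\ supersolution) of $\Delta_p u = f$ at $x_0\in\Omega$ if, whenever $\varphi\in C^3$ touches $u$ from above (resp.\ below) at $x_0$, one has $\Delta_p\varphi(x_0)\le f(x_0)$ (resp.\ $\ge$), where only those $\varphi$ with $\nabla\varphi(x_0)\neq 0$ and with $\Delta_p\varphi(x_0)\neq 0$ whenever $f(x_0)=0$ are admitted as test functions. The viscosity notion for the asymptotic formula is analogous: for every such admissible $\varphi$ touching $u$ from the appropriate side at $x_0$, the corresponding one-sided asymptotic inequality
\[
\mathcal{A}_\veps[\varphi;f](x_0)\le \varphi(x_0)+\veps^2 J_p(f(x_0))+o(\veps^2)\qquad(\veps\to 0^+)
\]
(resp.\ $\ge$) holds. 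These admissibility restrictions are exactly the standing assumptions of \Cref{thm:subsupersmooth-bis}.

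Second, I would run the equivalence one implication at a time. For the direction PDE\,$\Rightarrow$\,MVP, suppose $u$ is a viscosity subsolution of $\Delta_p u=f$ and pick an admissible $\varphi$ touching $u$ from above at $x_0$. Then $\Delta_p\varphi(x_0)\le f(x_0)$, and depending on the sign of $f(x_0)$, applying \Cref{thm:subsupersmooth-bis} with $\mathcal{A}_\veps^+$ (if $f(x_0)\ge 0$) or $\mathcal{A}_\veps^-$ (if $f(x_0)\le 0$) yields exactly the sub-sided asymptotic inequality for $\varphi$. Since the viscosity definition of the MVP is phrased pointwise on the admissible test function $\varphi$ itself, this is the conclusion. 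The opposite direction, and the symmetric supersolution statements, are handled identically by using the converse direction in \Cref{thm:subsupersmooth-bis}.

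Third, I would verify that the choice between $\overline{\mathcal{A}}_\veps[\,\cdot\,;f]$ and $\underline{\mathcal{A}}_\veps[\,\cdot\,;f]$ is immaterial. The two operators differ only at points where $f(x_0)=0$, but at such points admissibility forces $\Delta_p\varphi(x_0)\neq 0$, and \Cref{thm:subsupersmooth-bis} then supplies both the $\mathcal{A}_\veps^+$ and the $\mathcal{A}_\veps^-$ versions of the asymptotic inequality (since $f(x_0)=0$ is contained in both the $f\ge 0$ and $f\le 0$ cases of that theorem). Consequently, both viscosity notions describe exactly the same class of solutions.

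The only real obstacle in this plan is bookkeeping of the admissibility class at points where $f$ vanishes together with $\Delta_p\varphi$ or with $\nabla\varphi$: the operators $\mathcal{A}_\veps^\pm$ are not well defined for arbitrary $\varphi$ there, and the asymptotic expansion in \Cref{thm:main-p-greater2} breaks down without the non-degeneracy assumption. This is why the viscosity definition in \Cref{app:viscositydef} must restrict to test functions with $\nabla\varphi(x_0)\neq 0$, and, when $f(x_0)=0$, with $\Delta_p\varphi(x_0)\neq 0$; with that restriction in place, the corollary is a one-line consequence of \Cref{thm:subsupersmooth-bis}.
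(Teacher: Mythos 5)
Your plan is correct and coincides with the paper's own argument: apply \Cref{thm:subsupersmooth-bis} (i.e.\ Proposition~\ref{prop:subsupersmooth}) test function by test function, matching its hypotheses to the admissibility restrictions built into Definitions~\ref{def:viscsol-p-Laplace} and~\ref{def:asMVPviscosity}. One small caveat: you have flipped the sign convention in your recollection of the viscosity definitions. In the paper's Definition~\ref{def:viscsol-p-Laplace}, a supersolution corresponds to test functions touching from below with conclusion $\Delta_p\varphi(x_0)\le f(x_0)$, and a subsolution to test functions touching from above with conclusion $\Delta_p\varphi(x_0)\ge f(x_0)$ (and analogously for the MVP in Definition~\ref{def:asMVPviscosity}); you have stated the reverse. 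Since the flip is applied consistently to both sides, the logical structure of your transfer argument is unaffected, but the labels should be corrected to agree with the paper's conventions.
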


The precise definitions of viscosity solutions for both the $p$-Laplace equation and the asymptotic mean value property can be found in \Cref{app:viscositydef}.

Observe that in the previous result we are not assuming any a priori regularity for $u$ besides
continuity. Also notice that we have two different mean value operators,  $\overline{\mathcal{A}}_\veps[\varphi;f]$ and $\underline{\mathcal{A}}_\veps[\varphi;f]$, such that the associated mean value formulas characterize viscosity solutions to 
$\Delta_p u(x)=f(x)$. 

\begin{remark}Let us point out that classical weak solutions for the $p$-Laplacian that are obtained minimizing
the energy 
$$
E(u) = \frac1p \int_\Omega |\nabla u(x)|^p  \dd x - \int_\Omega f(x) u(x)  \dd x
$$
coincide with solutions in the viscosity sense, \cite{Is,JuJu,JuLiMan}. Hence, our main result 
also provides a characterization in terms of an asymptotic mean value formula
for weak solutions. 
\end{remark}

\subsection{Dynamic Programming Principle and game theoretical interpretation}
The previous asymptotic expansion naturally gives a dynamic programming principle for the $p$-Laplace Poisson problem
    \begin{align}\label{eq:ppoisson}
       \left\{
        \begin{aligned}
           \Delta_p u(x)&= f(x) &\textup{if}& \quad x\in \Omega,\\
            u(x)&= g(x) &\textup{if}& \quad x\in \partial\Omega.
        \end{aligned}
        \right.
    \end{align}

We will need the following regularity assumption on the domain $\Omega$:
\begin{equation}\label{as:regudom} 
\tag{$\textup{A}_{\Omega}$}
\begin{split}
\textup{$\Omega\subset \R^d$  is a bounded open domain with the uniform exterior ball condition: there exists }\\ \textup{$R>0$ such that for all $x_0\in \partial \Omega$ there exists $z_0\in \R^d\setminus \Omega$ such that $\overline{B_{R}(z_0)}\cap \overline{\Omega}=\{x_0\}$. }
\end{split}
\end{equation}

In the following result, Theorem \ref{thm:DPPexsandconv}, we present the well-posedness of the associated dynamic programming principle as well as the uniform convergence of its solution to the unique solution of \eqref{eq:ppoisson}.

\begin{theorem} \label{thm:DPPexsandconv}
    Let $d\in \mathbb{N}$, $p>2$, $\Omega$ satisfying \eqref{as:regudom},  $f\in C(\overline{\Omega})$ and $g\in C_{\textup{b}}(\R^d\setminus \Omega)$. Assume \eqref{as:trunc}. For every $\veps>0$, the following dynamic programming principle has a unique bounded Borel solution $u_\veps$:
    \begin{align} \label{DPP}
        \left\{
        \begin{aligned}
            u_\veps(x)&= \mathcal{A}_\veps[u_\veps;f](x) - \veps^2 J_p(f(x)) &\textup{if}& \quad x\in \Omega,\\
            u_\veps(x)&= g(x) &\textup{if}& \quad x\in \R^d\setminus\Omega,
        \end{aligned}
        \right.
    \end{align}
where $\mathcal{A}_\veps[\varphi;f]$ is defined by either $\overline{\mathcal{A}}_\veps[\varphi;f]$ or $\underline{\mathcal{A}}_\veps[\varphi;f]$.

Moreover,
    $$u_\veps\to u  \textup{ as $\veps\to0^+$ uniformly in $\overline{\Omega}$ }$$
    where $u$ is the unique viscosity solution of the $p$-Laplace Poisson problem \eqref{eq:ppoisson}.
\end{theorem}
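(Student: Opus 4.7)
The argument splits into two independent parts: for each fixed $\veps>0$, well-posedness of the DPP \eqref{DPP}; and the limit $\veps\to 0^+$.

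\emph{Fixed-$\veps$ analysis.} The operators $\overline{\mathcal{A}}_\veps[\,\cdot\,;f]$ and $\underline{\mathcal{A}}_\veps[\,\cdot\,;f]$ are monotone increasing on bounded Borel functions and preserve constants (the coefficients $\alpha$ and $1-\alpha$, together with the weights defining $\mathcal{M}_r$, add up to $1$). Explicit barriers $\psi^\pm$ built from $\|g\|_{L^\infty}$, $\|f\|_{L^\infty}$ and $\operatorname{diam}(\Omega)$ provide $L^\infty$ bounds for any putative solution. Existence then follows from Perron's method: the pointwise supremum over the nonempty, bounded family of bounded Borel subsolutions is itself a solution, thanks to the monotonicity and constant-preservation of $\mathcal{A}_\veps$. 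For uniqueness, if $u_\veps,v_\veps$ are two bounded Borel solutions agreeing on $\R^d\setminus\Omega$, monotonicity together with the elementary inequalities $\sup F-\sup G\leq \sup(F-G)$ and $\inf F-\inf G\leq \sup(F-G)$ give
\[
w(x)\coloneqq u_\veps(x)-v_\veps(x)\leq \sup_{|y-x|\leq \rho(\veps)} w(y)\qquad \text{for every } x\in\Omega,
\]
for some $\veps$-dependent radius $\rho(\veps)>0$. Iterating and using that, for fixed $\veps$, the reachable stencil covers $\Omega$ in finitely many steps out to $\R^d\setminus\Omega$ (where $w\equiv 0$) forces $w\leq 0$; symmetry yields $u_\veps\equiv v_\veps$.

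\emph{Limit $\veps\to 0^+$.} From \eqref{as:regudom} one constructs $\veps$-independent barriers (radial profiles based on the distance to the exterior ball and on the explicit $p$-Laplacian of $|x-z_0|^{p/(p-1)}$) that are strict super/subsolutions of the DPP near $\partial\Omega$; these yield uniform $L^\infty$ bounds and boundary equicontinuity for $\{u_\veps\}_\veps$. Define the half-relaxed limits
\[
\overline{u}(x)\coloneqq \limsup_{\substack{\veps\to 0^+\\y\to x}}u_\veps(y),\qquad \underline{u}(x)\coloneqq\liminf_{\substack{\veps\to 0^+\\y\to x}}u_\veps(y).
\]
The aim is to show that $\overline{u}$ is a viscosity subsolution and $\underline{u}$ a viscosity supersolution of $\Delta_p u=f$ in $\Omega$, with $\overline{u}\leq g\leq \underline{u}$ on $\partial\Omega$; the comparison principle for \eqref{eq:ppoisson} then forces $\overline{u}=\underline{u}=u$, which is equivalent to uniform convergence on $\overline{\Omega}$. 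For the subsolution property at $x_0\in\Omega$, let $\phi\in C^3(B_R(x_0))$ with $\nabla\phi(x_0)\neq 0$ strictly touch $\overline{u}$ from above at $x_0$, pick $\veps_n\to 0^+$ and $x_n\to x_0$ with $u_{\veps_n}(x_n)-\phi(x_n)$ a local maximum converging to $0$, and test the DPP at $x_n$ against $\phi+[u_{\veps_n}(x_n)-\phi(x_n)]$. Monotonicity of $\mathcal{A}_\veps$ together with the quantitative expansion in Theorem~\ref{thm:main-p-greater2}\ref{thm:main-p-greater2-item3}, whose error is controlled uniformly by the $C^3$-norm of $\phi$, yield $\Delta_p\phi(x_0)\leq f(x_0)$ in the limit; the supersolution case is symmetric, and the boundary conditions follow from the barriers.

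\textbf{Main obstacle.} The principal difficulty, already flagged in the introduction, is that the DPP is not consistent in the sense of Barles--Souganidis: the admissibility in Theorem~\ref{thm:subsupersmooth-bis} requires $\nabla\phi(x_0)\neq 0$ (and, at points where $f(x_0)=0$, also $\Delta_p\phi(x_0)\neq 0$), so one cannot directly invoke the Barles--Souganidis stability machinery. The workaround is to use the restricted class of admissible test functions discussed in Appendix~\ref{app:viscositydef} and verify, by a perturbation argument (small linear plus quadratic corrections making generic test functions admissible), that this restricted class still characterizes viscosity sub/supersolutions of $\Delta_p u=f$. A second subtlety is the discontinuous choice of branch between $\overline{\mathcal{A}}_\veps$ and $\underline{\mathcal{A}}_\veps$ near the zero set of $f$; this is absorbed by the equivalence of the two branches established in Corollary~\ref{teo-f>0-intro-bis} and by the requirement $\Delta_p\phi(x_0)\neq 0$ at zeros of $f$, which eliminates ambiguity when passing to the limit.
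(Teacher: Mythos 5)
Your overall architecture (fixed-$\veps$ well-posedness via comparison and barriers, then $\veps\to 0^+$ via half-relaxed limits and the quantitative $C^3$-expansion, working around Barles--Souganidis with the restricted test-function class) matches the paper, as does the barrier construction from \eqref{as:regudom} and the use of Theorem~\ref{thm:main-p-greater2}\ref{thm:main-p-greater2-item3} in the consistency step. Two of your departures are harmless: the paper builds existence by monotone iteration ($u_{k+1}=\mathcal{A}_\veps[u_k;f]-\veps^2J_p(f)$, proving the sequence is nondecreasing and bounded by a supersolution, then upgrading pointwise to uniform convergence) rather than an envelope/Perron argument; and the paper's explicit barrier is $W(x)=|x|^{-\alpha}$ with $\alpha=\tfrac{p+d-2}{p-1}$ rather than the power $|x-z_0|^{p/(p-1)}$. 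Iteration is the cleaner choice here since a pointwise supremum over an uncountable family of bounded Borel subsolutions is not obviously Borel; if you insist on Perron you must address that measurability issue.

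Your uniqueness argument, however, does not close as written. The inequality $w(x)\le \sup_{|y-x|\le\rho(\veps)}w(y)$ for $x\in\Omega$ together with $w\equiv 0$ on $\R^d\setminus\Omega$ does \emph{not} force $w\le 0$: any continuous nonnegative bump supported in $\Omega$ satisfies it, because the right-hand side is attained at $x$ itself. The crude $\sup F-\sup G\le\sup(F-G)$ bound throws away precisely the information that makes the argument work, namely that $\mathcal{A}_\veps$ contains the solid average $\fint_{B_{\gamma\veps c^{-\alpha/2}}}$ with a strictly positive coefficient $(1-\alpha)(1-\beta)$. At a maximum point $x_0$ of $w$ one uses this averaging term to get $\fint_{B}w\ge w(x_0)=\max w$, forcing $w\equiv\max w$ a.e.\ on a ball of radius bounded below by a positive $\veps$-dependent quantity, and only then can one propagate to $\R^d\setminus\Omega$ (this is what Lemma~\ref{lem:comparisonDPP} does). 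A similar, smaller gap appears in the convergence step: you restrict to test functions $\phi$ with $\nabla\phi(x_0)\ne 0$ uniformly, but Definition~\ref{def:viscsol-p-Laplace} only excludes vanishing gradient when $f(x_0)=0$; at points with $f(x_0)>0$ you must still rule out test functions with $\nabla\phi(x_0)=0$ touching $\overline u$ from above (since $\Delta_p\phi(x_0)=0<f(x_0)$ would then violate the subsolution inequality), which is Case~3 of the paper's proof and requires a separate estimate on $\mathcal{A}^+_\veps$ near degenerate points. The remaining cases, including the case $\Delta_p\phi(x_0)=0$ with $\nabla\phi(x_0)\ne 0$ where the quantitative error bound of Proposition~\ref{pro:asexp-greater2-quantitative} is essential, also deserve to be spelled out rather than absorbed into a one-line ``monotonicity plus expansion'' claim.
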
 

\subsection*{The $p$-Laplacian gambling house.} The previous result naturally provides us with a game-theoretical approach to solutions of the $p$-Laplace Poisson problem \eqref{eq:ppoisson}. More precisely, the Dynamic Programming Principle \eqref{DPP} corresponds to the
problem satisfied by the value function of the following two-players, zero-sum game, 
that can be called {\it the $p$-Laplacian gambling house}, whose rules are described below:

\begin{enumerate}[\noindent \rm g(i)]
    \item\label{item1} Fix a parameter $\veps>0$, an open bounded domain $\Omega$, a starting point $x_0\in \Omega$, a payoff function $g$ defined in $\R^d\setminus\Omega$, and a running payoff function $f$ defined in $\Omega$.
    
    \medskip
    
\item\label{item2} Each turn, given the actual position $x\in \Omega$, if $f(x)\geq0$, then, the second player (who wants to minimize the final payoff)
 chooses a constant $c \in [m(\veps),M(\veps)]$. Then they toss a biased coin with probabilities $\alpha$ for heads and $1-\alpha$ for tails. 
   \begin{enumerate}[$\bullet$]
   \item If the result is heads, the first player (who wants to maximize) chooses the next position at any point in the ball $B_{\veps^2c^{1-\alpha}}(x)$.
   \item If the result is tails, they play a round of tug-of-war with noise in the ball $B_{\gamma \veps c^{-\frac{\alpha}{2}}}(x)$ with probabilities $\beta$ and $1-\beta$ (see description in \Cref{sec:intro}).
   \end{enumerate}
   
   \medskip
   
   \item\label{item3} If $f(x)<0$ the roles of the players are reversed. More precisely, the first player chooses $c$ and the second player chooses the next position in the ball $B_{\veps^2c^{1-\alpha}}(x)$ if the result of the first coin toss is heads.
   
   \medskip
   
   \item\label{item4} The process described in \rm{g}\eqref{item2} and \rm{g}\eqref{item3} is repeated, with a running payoff at each movement of amount $-\veps^2 J_p(f(x))$. The game continues until the position of the game lies outside $\Omega$ for the first time (this position is denoted by $x_\tau$). When this happens, the second player pays the amount ~$g(x_\tau)$ to the first player.
\end{enumerate}

    Note that, in this game, the players are somehow ``gambling" by choosing the value of $c$. If one player chooses $c$ small, he lets, with probability $\alpha$, the other player to move freely in a small ball (of radius $\veps^2c^{1-\alpha}$ that becomes small if $c$ is small), and they both play tug-of-war with noise with probability $1-\alpha$ in a large ball
    (of radius $\gamma \veps c^{-\frac{\alpha}{2}}$ that becomes large when $c$ is small). On the other hand, if the player chooses $c$ large, the other player moves freely, with probability $\alpha$, in a large ball, and they both play tug-of-war with noise with probability $1-\alpha$ in a smaller ball. This kind of dynamics allows a much richer set of strategies than the usual tug-of-war with noise.

Also notice that the rules of the game are intuitively reflected in the form of the DPP.
In fact, take a point $x$ with $f(x) >0$, at this point the DPP reads as,
$$
u_\veps(x)=  
\inf_{c\in [m(\veps),M(\veps)]}\left\{\alpha \sup_{B_{\veps^2c^{1-\alpha}}(x)} u_\veps + (1-\alpha) \mathcal{M}_{\veps c^{-\frac{\alpha}{2}}}[u_\veps](x) \right\}  - \veps^2 J_p(f(x)).
$$
Now, one thinks about $u_\veps$ as the expected amount that the players will get starting the game at $x$
(this is the meaning of the value function of the game). This equation says that the value function at $x$
equals the result after one play. 
This is determined by  the infimum among constants \( c \). Recall that, when \( f(x) > 0 \), the player aiming to minimize the payoff chooses \( c \). The calculation of the infimum involves taking the weighted average, where the weights are determined by the probabilities of heads (\( \alpha \)) and tails (\( 1-\alpha \)) in a coin toss. This average includes the expected outcomes of selecting the next position. In this context, the player choosing the position seeks to maximize the outcome, which is where the supremum appears. Subsequently, the game involves a ``tug-of-war'' with noise in balls of radius related to \( c \) and \( \varepsilon \). Finally, the running payoff is subtracted, which in this case is \( \varepsilon^2 J_p(f(x)) \).
A similar interpretation holds at points $x$ with $f(x) <0$
since in this case the DPP is
$$
u_\veps(x)=  
\sup_{c\in [m(\veps),M(\veps)]}\left\{\alpha \inf_{B_{\veps^2c^{1-\alpha}}(x)} u_\veps + (1-\alpha) \mathcal{M}_{\veps c^{-\frac{\alpha}{2}}}[u_\veps](x) \right\}  - \veps^2 J_p(f(x)).
$$

The following result states the connection between this game and the solution of the dynamic programming principle.

\begin{theorem}
    Let the assumptions of \Cref{thm:DPPexsandconv} hold. Consider the game described in {\rm{g}\eqref{item1}-\rm{g}\eqref{item2}-\rm{g}\eqref{item3}-\rm{g}\eqref{item4}}. Then, the game has a value, and it is given by the unique solution $u_\veps$ of the dynamic programming principle \eqref{DPP}.
\end{theorem}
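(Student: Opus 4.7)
The plan is to define the upper and lower game values and show that both satisfy the dynamic programming principle \eqref{DPP}; by the uniqueness of bounded Borel solutions (established in \Cref{thm:DPPexsandconv}), they must coincide and equal $u_\veps$.

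First I would formalize the game as a stochastic process on histories: a Borel--measurable strategy $\sigma_1$ of the maximizing player, and $\sigma_2$ of the minimizing player, prescribe, at each step with current position $x$, the selection of the constant $c\in[m(\veps),M(\veps)]$ by whichever player chooses it (depending on the sign of $f(x)$), the next position in the free-move ball $B_{\veps^2 c^{1-\alpha}}(x)$ if the first coin lands heads, and the tug-of-war move in $B_{\gamma\veps c^{-\alpha/2}}(x)$ if the second coin lands heads. The Ionescu--Tulcea extension then produces a probability measure $\mathbb{P}_{x,\sigma_1,\sigma_2}$ on the path space, under which the exit time $\tau$ and the total payoff
\begin{equation*}
H \coloneqq g(x_\tau)-\sum_{n=0}^{\tau-1}\veps^2 J_p(f(x_n))
\end{equation*}
are well defined. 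Almost sure finiteness of $\tau$, with $\mathbb{E}[\tau]$ bounded uniformly over strategies, follows from the fact that for fixed $\veps>0$ the minimum available noise radius $\gamma\veps M(\veps)^{-\alpha/2}$ is strictly positive; a standard supermartingale/barrier argument using the function $x\mapsto|x-x_0|^2$ with $x_0\notin\overline{\Omega}$ then gives a uniform geometric tail for $\tau$.

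Next I would set
\begin{equation*}
\underline{u}_\veps(x)\coloneqq\sup_{\sigma_1}\inf_{\sigma_2}\mathbb{E}_{x,\sigma_1,\sigma_2}[H], \qquad \overline{u}_\veps(x)\coloneqq\inf_{\sigma_2}\sup_{\sigma_1}\mathbb{E}_{x,\sigma_1,\sigma_2}[H],
\end{equation*}
which are Borel on $\R^d$ with $\underline{u}_\veps\leq\overline{u}_\veps$, and verify that each satisfies the DPP \eqref{DPP}. Conditioning on the outcome of the first turn and distinguishing the cases $f(x)\geq 0$ and $f(x)<0$ so that the correct averaging operator $\mathcal{A}_\veps^{\pm}$ appears, the two inequalities comprising the DPP are obtained by the usual concatenation argument: an $\eta$-optimal choice of $c$, an $\eta$-optimal free-move position, and an $\eta$-optimal tug-of-war move are glued to $\eta$-optimal continuation strategies from the resulting next position, and then $\eta\to 0^+$. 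The uniqueness part of \Cref{thm:DPPexsandconv} then forces $\underline{u}_\veps=\overline{u}_\veps=u_\veps$, which is the claim.

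The hardest step will be this DPP verification, because the extrema defining $\mathcal{A}_\veps^{\pm}$ (the inf/sup over $c$ and the sup/inf of the current candidate value function over balls) need not be attained by measurable selectors, and because the role assignment flips at the zero set of $f$. I would handle the measurability issue by applying the Kuratowski--Ryll-Nardzewski selection theorem on the compact parameter set $[m(\veps),M(\veps)]$, exploiting continuity in $c$ of the single-step expectations when the current value function is bounded and Borel. The termination bound on $\tau$ guarantees that the running-payoff series converges absolutely, that all interchanges of expectation and supremum in the concatenation arguments are justified, and that the cumulative $\eta$-error is $O(\eta\,\mathbb{E}[\tau])$ and hence vanishes in the limit.
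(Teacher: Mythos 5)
Your proposal takes a genuinely different route from the paper. You propose to show that both game values $\underline{u}_\veps$ (lower, $\sup\inf$) and $\overline{u}_\veps$ (upper, $\inf\sup$) are bounded Borel solutions of \eqref{DPP} by a conditioning/concatenation argument, and then invoke the uniqueness established in \Cref{thm:DPPexsandconv} to force them to coincide with $u_\veps$. The paper goes in the opposite direction: it starts from the known solution $u_\veps$ of \eqref{DPP}, builds $\delta$-optimal strategies $S_I^*$ (resp.\ $S_{II}^*$) for Player I (resp.\ II) that track $u_\veps$ up to errors $\delta/2^{k+1}$, shows that $M_k = u_\veps(x_k) - \veps^2\sum_{i<k} J_p(f(x_i)) - \delta/2^k$ is a submartingale (and the analogous $N_k$ a supermartingale), and applies the Optional Stopping Theorem to obtain $u_\veps \le u_I^\veps$ and $u_{II}^\veps \le u_\veps$; combined with the trivial $u_I^\veps \le u_{II}^\veps$, this immediately yields equality. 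The paper's route is the cleaner one here precisely because it never needs the game values to a priori satisfy \eqref{DPP}: the quasi-optimal strategies are read off from $u_\veps$ itself, so the only measurable objects that appear are $u_\veps$ and its explicit $\delta$-shifts, and the DPP equation for $u_\veps$ is used only algebraically inside the conditional-expectation computation.

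This matters because the hardest step you flagged --- proving that $\underline{u}_\veps$ and $\overline{u}_\veps$ are Borel and solve \eqref{DPP} --- is precisely where your outline remains a sketch rather than a proof. Measurability of value functions defined by $\sup_{\sigma_1}\inf_{\sigma_2}$ over history-dependent strategies is a genuine issue in tug-of-war games: a priori these are only universally/lower-analytically measurable, and Kuratowski--Ryll-Nardzewski gives a measurable selector for the one-step optimization in $c$ (and in the moves) only once you already know the candidate value function is Borel, which is the thing you are trying to prove. One would need an additional argument (e.g., monotone iteration from barriers, as is done in \cite{Lew20} and implicitly in the existence part of the paper's Section 4, which constructs $u_\veps$ as a monotone limit of Borel iterates) to justify Borel measurability before your uniqueness step can be invoked. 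Once that is in place your concatenation argument would close the proof, and your explicit termination estimate (geometric tail for $\tau$ via a quadratic barrier with $x_0\notin\overline\Omega$) is a correct and more detailed version of the paper's briefer statement that the game ends almost surely; the paper also implicitly needs this to apply Optional Stopping and to pass to the limit $k\to\infty$. In short: correct overall strategy and a valid alternative, but the central DPP-verification step is left at the level of a plan, whereas the paper's martingale argument bypasses it entirely.
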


A more rigorous statement of the above result will be given in \Cref{sec:probabilitythings} after we introduce all the probability machinery that is required. 

Notice that this result combined with Theorem \ref{thm:DPPexsandconv} gives that 
the value functions of the game approximate the solution to the $p$-Laplace Poisson problem \eqref{eq:ppoisson}
when the parameter that controls the size of the steps in the game, $\varepsilon$, goes to zero.

 \subsection{The limit cases and possible extensions} Here we collect possible extensions of our results and gather some final comments.  Here, for simplicity, we assume $f>0$.

 \noindent $\bullet$ {\bf The limit case $p=+\infty$.} For $p\to+\infty$, we have $ \alpha \to 1$ and $(f(x))^{\frac{1}{p-1}} \to1$, and then the mean value formula reads as
 $$
 \sup_{B_{\veps^2}(x)} u = u(x)  + \varepsilon^2 +o(\varepsilon^2). 
 $$
The corresponding DPP is given by
 \begin{align*}
         \left\{
        \begin{aligned}
             u_\varepsilon (x) & =  \sup_{B_{\veps^2}(x)} u_\varepsilon - \varepsilon^2 
 &\textup{if}& \quad x\in \Omega,\\
            u_\veps(x)&= g(x) &\textup{if}& \quad x\in \R^d\setminus\Omega,
        \end{aligned}
        \right.
    \end{align*}
whose solutions converge to solutions to the Eikonal equation      
 \begin{align} \label{eq:ppoisson-Eik}
        \left\{
        \begin{aligned}
           |\nabla u(x) |&= 1 &\textup{if}& \quad x\in \Omega,\\
            u(x)&= g(x) &\textup{if}& \quad x\in \partial\Omega.
        \end{aligned}
        \right.
    \end{align}

Notice that the limit as $p\to +\infty$ for solutions to the $p$-Laplace Poisson problem 
\eqref{eq:ppoisson} is the solution to \eqref{eq:ppoisson-Eik}, see \cite{BBM}. Therefore, taking $p=+\infty$
in our mean value formulas (or in the corresponding dynamic programming principles) is
compatible with taking the limit as $p \to + \infty$ in the $p$-Laplace Poisson problem.

 \noindent $\bullet$ {\bf The limit case $p=2$.}  For $p=2$, we get $\beta=0$, $\alpha=0$ and $\gamma=\sqrt{2(2+d)}$. Hence, as $(f(x))^{\frac{1}{p-1}}= f(x)$,
 the mean value formula is given by 
 $$
  \fint_{ B_{\gamma \varepsilon}(x)} u (y) \, \mathrm{d}y =    u(x)
 + \varepsilon^2 f(x) +o(\varepsilon^2),
 $$
 that is a well-known formula for solutions to $\Delta u(x) = f(x)$, see \cite{Blaschke,Privaloff}.
The corresponding DPP is given by
 \begin{align*} \label{DPP-infty-2}
        \left\{
        \begin{aligned}
             u_\varepsilon (x) & =   \fint_{ B_{\gamma\varepsilon}(x)} u_\varepsilon (y) \, \mathrm{d}y 
 - \varepsilon^2 f(x)  
 &\textup{if}& \quad x\in \Omega,\\
            u_\veps(x)&= g(x) &\textup{if}& \quad x\in \R^d\setminus\Omega,
        \end{aligned}
        \right.
    \end{align*}
whose solutions converge to solutions to the Poisson problem for the usual Laplacian. 
Hence, also in this case, our asymptotic expensions are compatible with the classical ones. 

 \noindent $\bullet$ {\bf Mean value properties for more general equations.} Using similar techniques as the ones used to prove our results, we can provide asymptotic expansions, asymptotic mean value characterizations of viscosity solutions, dynamic programming principles, and game theoretical interpretations for a large class of related equations. 
 
In general, if one has an asymptotic mean value formula of the form
$$
u(x) = \mathcal{B}_r[u] (x) + o(r^2)
$$
that characterizes solutions to $F(D^2u(x), \nabla u(x), x) =0$,
one can expect that
$$
u(x) = \inf_{c\in [m(\veps),M(\veps)]}
\left\{\alpha \sup_{B_{\veps^2c^{1-\alpha}}(x)} u + (1-\alpha) \mathcal{B}_{\veps c^{-\frac{\alpha}{2}}}[u](x) \right\} - \varepsilon^2 f(x) + o(\varepsilon^2)
$$
charaterizes solutions to
$$
|\nabla u(x)|^{\alpha } (F(D^2u(x), \nabla u(x), x))^{1-\alpha} = f(x),
$$
assuming that $f>0$ (and a similar formula changing inf by sup and modifying $\mathcal{B}$
holds when $f<0$). Nonlocal versions of $F$ can also be treated similarly. For a reference on this kind of problems we refer to \cite{IS13}.

In particular, for $a_1,a_2\geq0$ and $a_3>0$, we can deal with equations of the form
 \begin{equation*}
 |\nabla u (x) |^{a_1} \left(a_2 \left\langle D^2u (x)  \frac{\nabla u}{|\nabla u|} (x) , \frac{\nabla u}{|\nabla u|} (x) \right\rangle  + a_3\Delta u (x) \right)=f(x),
 \end{equation*}
and also with Pucci type operators
 \begin{equation*}
 |\nabla u (x) |^{a_1} \left( \inf_{\lambda I \leq A \leq \Lambda I} \mbox{trace} \left( A \,  D^2 u (x) \right) \right)=f(x),
 \end{equation*}
 as long as $f(x)>0$.

\section{Asymptotic expansions and mean value properties for the $p$-Laplacian} \label{sect-asymp}

The goal of this section is to prove the asymptotic expansion and mean value property results for the $p$-Laplacian in the range $p\in(2,\infty)$, as stated in \Cref{thm:main-p-greater2}, \Cref{thm:subsupersmooth-bis} and \Cref{teo-f>0-intro-bis}.

\subsection{Asymptotic expansions for $C^2$ functions}

Let us start by recalling the result of Theorem \ref{thm:MaPaRo10} related to the average operator 
 \begin{align*}
        \mathcal{M}_{r}[\varphi](x) \coloneqq \beta\left(\frac{1}{2} \sup_{ B_{\gamma r}(x)}\varphi + \frac{1}{2} \inf_{ B_{\gamma r}(x)}\varphi \right) + (1-\beta) \fint_{ B_{\gamma r}(x)} \varphi(y) \, \mathrm{d}y,
    \end{align*}
where $\beta = (p-2)/(p+d)$ and $\gamma =\sqrt{2(p+d)}$. It states that for $p>2$ and $\varphi \in C^2(B_R(x))$ for some $R>0$ and such that $\nabla\varphi(x)\not=0$, we have the following asymptotic expansion for the normalized $p$-Laplacian:
\begin{equation}\label{eq:plapnasexp_littleo}
\mathcal{M}_r[\varphi](x)-\varphi(x) =  r^2\Delta_p^{\textup{N}} \varphi(x) + o(r^2) \quad \textup{as} \quad r\to0^+.
\end{equation} 
Additionally, we recall that the following asymptotic expansion for the modulus of the gradient holds:
\begin{equation}\label{eq:gradasexpsup_littleo}
\sup_{B_r(x)} \varphi-\varphi(x)  = r\left|\nabla \varphi(x)\right| + o(r)  \quad \textup{as} \quad r\to0^+.
\end{equation}
We also introduce the following assumption that can be seen as a weakened version of \eqref{as:trunc}:
\begin{equation}\label{as:trunc2}\tag{$\textup{H}_{T}$}
    \left\{\begin{aligned}
    &\textup{Let } m,M:\R_+ \to \R_+ \textup{ be non-decreasing and non-increasing functions respectively, and such}\\
&\textup{that }m(\veps)\to0^+, \quad M(\veps)\to+\infty, \quad \veps m(\veps)^{-\frac{\alpha}{2}}\to0^+ \quad \textup{and} \quad \veps^2M(\veps)^{1-\alpha}\to0^+ \quad \textup{as} \quad \veps\to0^+.
    \end{aligned}\right.
\end{equation}
\begin{remark}
    It is standard to check that, if \eqref{as:trunc} holds, then \eqref{as:trunc2} holds.
\end{remark}
Finally, we recall the definitions of $\mathcal{A}^+_\veps$ and $\mathcal{A}^-_\veps$ given in \Cref{sec:mainpmayor2},
\begin{align}\label{def:A+bis}
    \mathcal{A}_\veps^+[\varphi](x)\coloneqq\inf_{c\in [m(\veps),M(\veps)]}\left\{\alpha \sup_{B_{\veps^2c^{1-\alpha}}(x)} \varphi + (1-\alpha) \mathcal{M}_{\veps c^{-\frac{\alpha}{2}}}[\varphi](x) \right\} 
\end{align}
and
\begin{align}\label{def:A-bis}
    \mathcal{A}_\veps^-[\varphi](x)\coloneqq\sup_{c\in [m(\veps),M(\veps)]}\left\{\alpha \inf_{B_{\veps^2c^{1-\alpha}}(x)} \varphi + (1-\alpha) \mathcal{M}_{\veps c^{-\frac{\alpha}{2}}}[\varphi](x) \right\}. 
\end{align}

Let us start by proving that  \eqref{def:A+bis} and \eqref{def:A-bis} are well defined under very mild assumptions. This will actually prove a more general version of Theorem \ref{thm:main-p-greater2}\eqref{thm:main-p-greater2-item1}.

\begin{lemma}
    Let $d\in \mathbb{N}$, $p>2$, $\veps_0\in(0,1)$, $x\in \R^d$ and $\varphi:B_{\veps_0}(x)\to\R$ be a bounded Borel function. Assume \eqref{as:trunc2}.
 There exists $\overline{\veps}\in(0,\veps_0)$ such that, for all $\veps\in(0,\overline{\veps})$, the quantities \eqref{def:A+bis} and \eqref{def:A-bis} are well defined.
\end{lemma}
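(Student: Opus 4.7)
The plan is to verify that for $\veps$ small enough, every ball appearing implicitly in the definitions of $\mathcal{A}^+_\veps[\varphi](x)$ and $\mathcal{A}^-_\veps[\varphi](x)$ is contained in $B_{\veps_0}(x)$, so that each expression makes sense, and then to observe that the outer infimum/supremum over the compact interval $[m(\veps),M(\veps)]$ of a bounded quantity is automatically finite.

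Unfolding the definitions, for each $c\in[m(\veps),M(\veps)]$ we need $\varphi$ to be evaluated on two balls centered at $x$: the ball $B_{\veps^2c^{1-\alpha}}(x)$ appearing in the sup/inf term, and the ball $B_{\gamma\veps c^{-\alpha/2}}(x)$ appearing inside $\mathcal{M}_{\veps c^{-\alpha/2}}[\varphi](x)$. Since $p>2$ gives $\alpha=(p-2)/(p-1)\in(0,1)$, the exponent $1-\alpha$ is positive and $-\alpha/2$ is negative. Hence the radius $\veps^2c^{1-\alpha}$ is maximized on $[m(\veps),M(\veps)]$ at $c=M(\veps)$, while $\gamma\veps c^{-\alpha/2}$ is maximized at $c=m(\veps)$. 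Therefore it suffices to guarantee
\[
\veps^2 M(\veps)^{1-\alpha}<\veps_0 \qquad \text{and} \qquad \gamma\veps\, m(\veps)^{-\alpha/2}<\veps_0.
\]
Both of these are exactly the content of the limits in \eqref{as:trunc2}: $\veps^2 M(\veps)^{1-\alpha}\to0^+$ and $\veps m(\veps)^{-\alpha/2}\to0^+$ as $\veps\to0^+$. Consequently one can pick $\overline{\veps}\in(0,\veps_0)$ such that the two inequalities above hold for every $\veps\in(0,\overline{\veps})$.

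For such $\veps$, the quantities $\sup_{B_{\veps^2 c^{1-\alpha}}(x)}\varphi$, $\inf_{B_{\veps^2 c^{1-\alpha}}(x)}\varphi$, and $\mathcal{M}_{\veps c^{-\alpha/2}}[\varphi](x)$ are all well defined for every $c\in[m(\veps),M(\veps)]$, since $\varphi$ is a bounded Borel function on $B_{\veps_0}(x)$ and the integrals/essential suprema/infima of a bounded Borel function on a nondegenerate ball are finite. Moreover, the resulting expression as a function of $c$ takes values in a bounded interval (by the boundedness of $\varphi$), so its infimum (respectively supremum) over the compact set $[m(\veps),M(\veps)]$ is a finite real number. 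This proves that both $\mathcal{A}^+_\veps[\varphi](x)$ and $\mathcal{A}^-_\veps[\varphi](x)$ are well defined for every $\veps\in(0,\overline{\veps})$.

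There is no real obstacle here; the step that actually uses hypothesis \eqref{as:trunc2} (as opposed to the stronger \eqref{as:trunc}) is the containment of the balls in $B_{\veps_0}(x)$, and the mild monotonicity assumed on $m$ and $M$ is used only to identify the extremal radii at the endpoints of $[m(\veps),M(\veps)]$. Measurability of the map $c\mapsto \mathcal{M}_{\veps c^{-\alpha/2}}[\varphi](x)$ is not needed, as we only take infima and suprema of pointwise values, which produce well-defined extended real numbers that are in fact finite under the bound on $\varphi$.
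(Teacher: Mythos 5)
Your proof is correct and follows essentially the same route as the paper: bound the radii of all balls in the definitions by their extremal values over $c\in[m(\veps),M(\veps)]$, invoke the decay conditions $\veps^2 M(\veps)^{1-\alpha}\to0^+$ and $\veps\, m(\veps)^{-\alpha/2}\to0^+$ from \eqref{as:trunc2} to get containment in $B_{\veps_0}(x)$ for small $\veps$, and conclude from boundedness and Borel measurability of $\varphi$. One minor inaccuracy in your closing remark: the monotonicity of $m,M$ \emph{in $\veps$} assumed in \eqref{as:trunc2} plays no role in locating the extremal $c$ in $[m(\veps),M(\veps)]$ — that follows from the signs of the exponents $1-\alpha>0$ and $-\alpha/2<0$, as you correctly argued earlier — and indeed this monotonicity hypothesis is not needed in this lemma at all.
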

\begin{proof}
Since $c\in [m(\veps),M(\veps)]$, then, for $\veps>0$ small enough, we have that 
\[
B_{\veps^2 c^{1-\alpha}}(x) \subset B_{\veps^2 M(\veps)^{1-\alpha}}(x)\subset B_{\veps_0}(x) \quad \textup{and} \quad B_{\gamma \veps c^{-\frac{\alpha}{2}}}(x) \subset B_{\gamma \veps m(\veps)^{-\frac{\alpha}{2}}}(x)\subset B_{\veps_0}(x),
\]
where the last inclusions follow from assumption \eqref{as:trunc2}. Since the function $\varphi$ is bounded in $B_{\veps_0}$, then all the $\inf$ and $\sup$ involved in \eqref{def:A+bis} and \eqref{def:A-bis} are well-defined. Finally, since $\varphi$ is additionally a Borel function, then the integrals in \eqref{def:A+bis} and \eqref{def:A-bis} are also well-defined.
\end{proof}

We are now ready prove Theorem \ref{thm:main-p-greater2}\eqref{thm:main-p-greater2-item2}. We restate it here for convenience.

\begin{proposition}\label{pro:asexp-greater2} Let $d\in \mathbb{N}$, $p>2$, $\veps_0\in(0,1)$, $x\in \R^d$, and $\varphi\in C^2(B_{\veps_0}(x))$ such that $\nabla\varphi(x)\not=0$ and $\Delta_p\varphi(x)\not=0$. Assume \eqref{as:trunc2} and define, for  $\veps<\veps_0$ small enough, the average
     \[
 \mathcal{A}_\veps[\varphi](x)\coloneqq\left\{\begin{aligned}
 \mathcal{A}_\veps^+[\varphi](x) \quad \textup{if} \quad \Delta_p\varphi(x)\geq 0,\\
  \mathcal{A}_\veps^-[\varphi](x) \quad \textup{if} \quad \Delta_p\varphi(x)<0.
 \end{aligned}\right.
\]
Then,
\begin{align}\label{eq:asexp-proofs}
\mathcal{A}_\veps[\varphi](x)=\varphi(x)+\veps^2 J_p(\Delta_p\varphi(x)) +o(\veps^2) \quad \textup{as} \quad \veps\to0^+.
\end{align}
\end{proposition}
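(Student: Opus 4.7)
The plan is to reduce Proposition \ref{pro:asexp-greater2} to the arithmetic-geometric identity in \Cref{lem:ident-crucial}. First, I reduce the case $\Delta_p\varphi(x)<0$ to the case $\Delta_p\varphi(x)>0$: a direct computation shows that $\mathcal{A}_\veps^-[\varphi](x)=-\mathcal{A}_\veps^+[-\varphi](x)$, and combined with $\Delta_p(-\varphi)=-\Delta_p\varphi$ and the oddness of $J_p$, it suffices to treat $\Delta_p\varphi(x)>0$. Since $\nabla\varphi(x)\neq 0$, this also forces $\Delta_p^{\textup{N}}\varphi(x)>0$. Applying the identity of \Cref{lem:ident-crucial} with $a=|\nabla\varphi(x)|$, $b=\Delta_p^{\textup{N}}\varphi(x)$ and $\alpha=(p-2)/(p-1)$, and using $|\nabla\varphi(x)|^{\alpha}(\Delta_p^{\textup{N}}\varphi(x))^{1-\alpha}=(|\nabla\varphi(x)|^{p-2}\Delta_p^{\textup{N}}\varphi(x))^{1/(p-1)}=J_p(\Delta_p\varphi(x))$, I rewrite the target as
\[
J_p(\Delta_p\varphi(x)) = \inf_{c>0} F(c), \qquad F(c)\coloneqq \alpha c^{1-\alpha}|\nabla\varphi(x)|+(1-\alpha)c^{-\alpha}\Delta_p^{\textup{N}}\varphi(x),
\]
with the infimum attained at the unique point $c^*\coloneqq \Delta_p^{\textup{N}}\varphi(x)/|\nabla\varphi(x)|\in(0,+\infty)$.

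Next, I substitute the asymptotic expansions \eqref{eq:gradasexpsup_littleo} and \eqref{eq:plapnasexp_littleo} with the scalings $r_1(c,\veps)\coloneqq \veps^2 c^{1-\alpha}$ and $r_2(c,\veps)\coloneqq \veps c^{-\alpha/2}$. Writing the remainders as $\sup_{B_{r_1}(x)}\varphi-\varphi(x)=r_1|\nabla\varphi(x)|+r_1\omega_1(r_1)$ and $\mathcal{M}_{r_2}[\varphi](x)-\varphi(x)=r_2^2\Delta_p^{\textup{N}}\varphi(x)+r_2^2\omega_2(r_2)$, with $\omega_i(r)\to 0$ as $r\to 0^+$, a direct computation gives, for every $c\in[m(\veps),M(\veps)]$,
\[
\alpha\sup_{B_{r_1}(x)}\varphi+(1-\alpha)\mathcal{M}_{r_2}[\varphi](x)-\varphi(x)=\veps^2\bigl(F(c)+R_\veps(c)\bigr),
\]
with the remainder $R_\veps(c)\coloneqq \alpha c^{1-\alpha}\omega_1(r_1)+(1-\alpha)c^{-\alpha}\omega_2(r_2)$. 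Passing to the infimum, \eqref{eq:asexp-proofs} is equivalent to
\[
\inf_{c\in[m(\veps),M(\veps)]}\bigl(F(c)+R_\veps(c)\bigr)\longrightarrow F(c^*) \quad\textup{as}\quad \veps\to 0^+.
\]

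The heart of the proof is to control $R_\veps$ uniformly in $c\in[m(\veps),M(\veps)]$; this is the main obstacle, because the interval expands and the factors $c^{1-\alpha}$ and $c^{-\alpha}$ need not be bounded. The key observation is that $|R_\veps(c)|\leq \theta(\veps) F(c)$ with $\theta(\veps)\to 0$. Indeed, assumption \eqref{as:trunc2} ensures that $r_1\leq \veps^2 M(\veps)^{1-\alpha}\to 0$ and $r_2\leq \veps m(\veps)^{-\alpha/2}\to 0$ uniformly in $c$, so the moduli $\delta_i(\veps)\coloneqq \sup\{|\omega_i(s)|: 0<s\leq \rho_i(\veps)\}$ tend to $0$, where $\rho_1(\veps)=\veps^2 M(\veps)^{1-\alpha}$ and $\rho_2(\veps)=\veps m(\veps)^{-\alpha/2}$. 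Combining with the elementary bounds $\alpha c^{1-\alpha}\leq F(c)/|\nabla\varphi(x)|$ and $(1-\alpha)c^{-\alpha}\leq F(c)/\Delta_p^{\textup{N}}\varphi(x)$, which rely crucially on the strict positivity of both $|\nabla\varphi(x)|$ and $\Delta_p^{\textup{N}}\varphi(x)$, yields $|R_\veps(c)|\leq \theta(\veps)F(c)$ with $\theta(\veps)\coloneqq \delta_1(\veps)/|\nabla\varphi(x)|+\delta_2(\veps)/\Delta_p^{\textup{N}}\varphi(x)\to 0$. The lower bound $\inf_{c\in[m(\veps),M(\veps)]}(F(c)+R_\veps(c))\geq (1-\theta(\veps))F(c^*)$ then follows from $F(c)\geq F(c^*)$ pointwise. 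For the upper bound, since $c^*$ is a fixed positive number and $m(\veps)\to 0^+$, $M(\veps)\to+\infty$, we have $c^*\in[m(\veps),M(\veps)]$ for $\veps$ small, so evaluating at $c=c^*$ gives $\inf_{c\in[m(\veps),M(\veps)]}(F(c)+R_\veps(c))\leq F(c^*)+R_\veps(c^*)\leq (1+\theta(\veps))F(c^*)$. Recalling that $F(c^*)=J_p(\Delta_p\varphi(x))$, the asymptotic expansion \eqref{eq:asexp-proofs} follows.
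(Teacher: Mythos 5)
Your proof is correct. The reduction of $\Delta_p\varphi(x)<0$ to $\Delta_p\varphi(x)>0$ via $\mathcal{A}_\veps^-[\varphi]=-\mathcal{A}_\veps^+[-\varphi]$ is exactly the paper's Step 2, and the setup with the two scalings $r_1,r_2$ is the same. Where you genuinely diverge is in how the error in the truncated infimum is handled. The paper fixes a small additive slack $\delta>0$, absorbs the $o(\cdot)$-terms inside $|\nabla\varphi(x)|\pm\delta$ and $\Delta_p^{\textup{N}}\varphi(x)\pm\delta$, and then needs \Cref{lem:gm-am-aprox} to compare the infimum over $[m(\veps),M(\veps)]$ with the infimum over $(0,\infty)$ in the upper bound. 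You instead establish the \emph{multiplicative} bound $|R_\veps(c)|\leq\theta(\veps)F(c)$, valid uniformly in $c$ on the whole truncated interval, using the elementary observation that each summand of $F(c)$ is bounded by $F(c)$ itself; this requires both $|\nabla\varphi(x)|>0$ and $\Delta_p^{\textup{N}}\varphi(x)>0$, precisely the same hypotheses the paper needs. Your upper bound then comes for free by evaluating at the fixed minimizer $c^*=\Delta_p^{\textup{N}}\varphi(x)/|\nabla\varphi(x)|$, which lies in $[m(\veps),M(\veps)]$ for $\veps$ small, so you never invoke the truncation lemma \Cref{lem:gm-am-aprox} at all. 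The price is that the argument is tailored to the qualitative $o(\veps^2)$ statement: the paper's route, reusing \Cref{lem:gm-am-aprox}, dovetails more naturally with the quantitative $C^3$ estimate in \Cref{pro:asexp-greater2-quantitative}, where an explicit truncation-error bound is needed. For the present proposition, though, your streamlined argument is a clean and valid alternative.
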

\begin{proof}
    \textbf{Step 1: }Let us begin by assuming that $\Delta_p\varphi(x)>0$. In this case, $\mathcal{A}_\veps[\varphi](x)=\mathcal{A}_\veps^+[\varphi](x)$ and $J_p(\Delta_p\varphi(x))=(\Delta_p\varphi(x))^\frac{1}{p-1}=|\nabla\varphi(x)|^{\alpha}(\nplap\varphi(x))^{1-\alpha}$. Note also that $|\nabla\varphi(x)|>0$ and $\nplap \varphi(x)>0$. Then, by \eqref{eq:plapnasexp_littleo} and \eqref{eq:gradasexpsup_littleo}, we have that
    \begin{align*}\label{eq:asexpid}
 \frac{\mathcal{A}^+_\veps[\varphi](x)-\varphi(x)}{\veps^2}&=\!\!\!\inf_{c\in [m(\veps),M(\veps)]}\left\{\alpha \left(\frac{\displaystyle \sup_{B_{\veps^2c^{1-\alpha}}(x)} \varphi-\varphi(x)}{\veps^2}\right) + (1-\alpha)\left( \frac{\mathcal{M}_{\veps c^{-\frac{\alpha}{2}}}[\varphi](x)-\varphi(x)}{\veps^2} \right)\right\}\\
 &=\!\!\!\inf_{c\in [m(\veps),M(\veps)]}\left\{\alpha \left(c^{1-\alpha}\left|\nabla \varphi(x)\right| + \frac{o(\veps^2c^{1-\alpha})}{\veps^2} \right) + (1-\alpha) \left(c^{-\alpha}\Delta_p^{\textup{N}} \varphi(x) + \frac{o(\veps^2 c^{-\alpha})}{\veps^2}\right)\right\}\\
  &= \!\!\!\inf_{c\in [m(\veps),M(\veps)]}\left\{\alpha c^{1-\alpha} \left(\left|\nabla \varphi(x)\right| + \frac{o(\veps^2c^{1-\alpha})}{\veps^2c^{1-\alpha}} \right) + (1-\alpha) c^{-\alpha} \left( \Delta_p^{\textup{N}} \varphi(x) + \frac{o(\veps^2 c^{-\alpha})}{\veps^2 c^{-\alpha}}\right)\right\}.
\end{align*}
Now let us fix $\delta>0$ small enough such that $|\nabla\varphi(x)|-\delta>0$ and $\Delta_p^{\textup{N}} \varphi(x)-\delta >0$. By \eqref{as:trunc2} we can also choose $\hat{\veps}$ small enough in such a way that $\left|\frac{o(\veps^2c^{1-\alpha})}{\veps^2c^{1-\alpha}}\right|\leq \delta$ and $\left|\frac{o(\veps^2 c^{-\alpha})}{\veps^2 c^{-\alpha}}\right|\leq \delta$ for all $c\in[m(\veps),M(\veps)]$ and all $\veps<\hat{\veps}$. Thus, by \eqref{eq:gm-am}, the above identity implies
\begin{align*}
\frac{\mathcal{A}^+_\veps[\varphi](x)-\varphi(x)}{\veps^2}&\geq  \inf_{c\in [m(\veps),M(\veps)]}\Big\{\alpha c^{1-\alpha} \left(\left|\nabla \varphi(x)\right| -\delta \Big) + (1-\alpha) c^{-\alpha} \Big( \Delta_p^{\textup{N}} \varphi(x) -\delta \right)\Big\}\\
&\geq\inf_{c>0}\Big\{\alpha c^{1-\alpha} \left(\left|\nabla \varphi(x)\right| -\delta \right) + (1-\alpha) c^{-\alpha} \left( \Delta_p^{\textup{N}} \varphi(x) -\delta \right)\Big\}\\
&= \left(\left|\nabla \varphi(x)\right| -\delta \right)^{\alpha} \left( \Delta_p^{\textup{N}} \varphi(x) -\delta \right)^{1-\alpha}\\
&=  \left|\nabla \varphi(x) \right|^{\alpha} \left( \Delta_p^{\textup{N}} \varphi(x) \right)^{1-\alpha} + o_\delta(1).
\end{align*}
Since $\delta>0$ was arbitrary, we have proved one of the inequalities of identity \eqref{eq:asexp-proofs}. To prove the reverse inequality, we proceed in a similar way to obtain 
\begin{align*}
\frac{\mathcal{A}^+_\veps[\varphi](x)-\varphi(x)}{\veps^2}&\leq  \inf_{c\in [m(\veps),M(\veps)]}\Big\{\alpha c^{1-\alpha} \left(\left|\nabla \varphi(x)\right| +\delta \right) + (1-\alpha) c^{-\alpha} \left( \Delta_p^{\textup{N}} \varphi(x) +\delta \right)\Big\}.
\end{align*}
We can now use \Cref{lem:gm-am-aprox} together with \eqref{as:trunc2} to get
\begin{align*}
\frac{\mathcal{A}^+_\veps[\varphi](x)-\varphi(x)}{\veps^2}\leq& \left(\left|\nabla \varphi(x)\right| +\delta \right)^{\alpha} \left( \Delta_p^{\textup{N}} \varphi(x) +\delta \right)^{1-\alpha}\\
&+ \alpha \left(\left|\nabla \varphi(x)\right| +\delta \right) m(\veps)^{1-\alpha} + (1-\alpha) \left( \Delta_p^{\textup{N}} \varphi(x) +\delta \right) M(\veps)^{-\alpha}\\
=&  \left|\nabla \varphi(x) \right|^{\alpha} \left( \Delta_p^{\textup{N}} \varphi(x) \right)^{1-\alpha} + o_\delta(1) + o_\veps(1).
\end{align*}
The arbitrariness of $\delta>0$ completes the proof of the remaining inequality in \eqref{eq:asexp-proofs}. With this, we have completed the proof when $\Delta_p\varphi(x)>0$. 

\textbf{Step 2: }Let us assume now that $\Delta_p\varphi(x)<0$. We define $\psi=-\varphi$ and note that $$\Delta_p\psi(x)=-\Delta_p\varphi(x)>0.$$ Thus, we can apply the previous result to get
\begin{align*}
\mathcal{A}_\veps^+[\psi](x)=\psi(x)+\veps^2 J_p(\Delta_p\psi(x))+ o(\veps^2).
\end{align*}
Let us rewrite the above identity in terms of $\varphi$. On one hand,
\[
J_p(\Delta_p\psi(x))=J_p(-\Delta_p\varphi(x))=-J_p(\Delta_p\varphi(x)).
\]
On the other hand
\begin{align*}
    \mathcal{A}^+_\veps[\psi](x)&= \mathcal{A}^+_\veps[-\varphi](x) = \inf_{c\in [m(\veps),M(\veps)]}\left\{\alpha \sup_{B_{\veps^2c^{1-\alpha}}(x)} \{-\varphi\} + (1-\alpha) \mathcal{M}_{\veps c^{-\frac{\alpha}{2}}}[-\varphi](x) \right\}\\
    &= \inf_{c\in [m(\veps),M(\veps)]}\left\{-\alpha \inf_{B_{\veps^2c^{1-\alpha}}(x)} \varphi - (1-\alpha) \mathcal{M}_{\veps c^{-\frac{\alpha}{2}}}[\varphi](x) \right\}\\
    &=-\sup_{c\in [m(\veps),M(\veps)]}\left\{\alpha \inf_{B_{\veps^2c^{1-\alpha}}(x)} \varphi + (1-\alpha) \mathcal{M}_{\veps c^{-\frac{\alpha}{2}}}[\varphi](x) \right\}=- \mathcal{A}^-_\veps[\varphi](x),
\end{align*}
that is,
\begin{align*}
-\mathcal{A}_\veps^-[\varphi](x)=-\varphi(x)-\veps^2 J_p(\Delta_p\varphi(x))+ o(\veps^2),
\end{align*}
which is precisely what we wanted to prove.
\end{proof}

\subsection{Asymptotic expansions with quantitative error estimates for $C^3$ functions} We rely now on the following quantitative version of Theorem \ref{thm:MaPaRo10} that can be found in Exercise 3.7 in \cite{Lew20}. 

 \begin{lemma}\label{lem:asexp-plapnorm}
 Let $d \in \mathbb{N}$, $p \geq 2$, $x\in \R^d$, and $\varphi \in C^3(B_R(x))$ for some $R > 0$, such that $\nabla \varphi(x) \neq 0$.
Then, there exists $\overline{R}$ small enough such that for all $r\in(0,\overline{R})$, we have that
\begin{align*}
    \mathcal{M}_r[\varphi](x) - \varphi(x)= r^2\Delta_p^{\textup{N}} \varphi(x) + r^2E_1(\varphi,r),
\end{align*}
with
\[
|E_1(\varphi,r)|\leq K \left(\|D^{3}\varphi\|_{L^\infty(B_{\overline{R}}(x)) }+ \frac{|D^2\varphi(x)|^2}{|\nabla\varphi(x)|}\right) r,
\]
where $K$ is a positive constant depending only on $p$ and $d$.
  \end{lemma}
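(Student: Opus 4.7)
The plan is a quantitative third-order Taylor expansion of $\varphi$ around $x$, treating the three ingredients of $\mathcal{M}_r$ separately and then combining them with the explicit constants $\beta,\gamma$. Fix $\overline{R}$ small enough that $B_{\gamma\overline{R}}(x)\subset B_R(x)$, write $\nu = \nabla\varphi(x)/|\nabla\varphi(x)|$, and for $y\in B_{\gamma r}(x)$ use
\[
\varphi(y) = \varphi(x) + \nabla\varphi(x)\cdot(y-x) + \tfrac{1}{2}(y-x)^\top D^2\varphi(x)(y-x) + R_3(y),
\]
with $|R_3(y)|\leq \tfrac{1}{6}\|D^3\varphi\|_{L^\infty(B_{\overline{R}}(x))}|y-x|^3$.

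For the mean-integral piece, the odd-order terms vanish by the symmetry of the ball, and the standard identity $\fint_{B_{\gamma r}(x)}(y-x)_i(y-x)_j\,\dd y = \frac{(\gamma r)^2}{d+2}\delta_{ij}$ gives
\[
\fint_{B_{\gamma r}(x)}\varphi(y)\,\dd y - \varphi(x) = \frac{(\gamma r)^2}{2(d+2)}\Delta\varphi(x) + O\bigl(\|D^3\varphi\|_{L^\infty(B_{\overline{R}}(x))}r^3\bigr).
\]
For the sup/inf piece, since $\nabla\varphi(x)\neq 0$ the extremum on $\overline{B_{\gamma r}(x)}$ is attained on the sphere for $r$ small; writing the extremizer as $y^{*}=x+\gamma r\nu^{*}$ with $|\nu^{*}|=1$, the first-order optimality condition $\nabla\varphi(y^{*})=\lambda\nu^{*}$ together with the Taylor expansion $\nabla\varphi(y^{*})=\nabla\varphi(x)+\gamma r D^2\varphi(x)\nu^{*}+O(\|D^3\varphi\|_\infty r^2)$ yields by a perturbation argument
\[
\nu^{*} = \pm\nu + \frac{r\gamma}{|\nabla\varphi(x)|}\bigl(I-\nu\nu^\top\bigr)D^2\varphi(x)(\pm\nu) + O\!\left(\frac{|D^2\varphi(x)|^2}{|\nabla\varphi(x)|^2}\,r^2\right).
\]
Plugging $y^{*}$ into the Taylor expansion and using $\nabla\varphi(x)\cdot\nu^{*}=\pm|\nabla\varphi(x)|(1-O(r^2|D^2\varphi|^2/|\nabla\varphi|^2))$ (the correction is orthogonal to $\nu$ to leading order) gives
\[
\sup_{B_{\gamma r}(x)}\!\varphi - \varphi(x) = \gamma r|\nabla\varphi(x)| + \frac{(\gamma r)^2}{2}\langle D^2\varphi(x)\nu,\nu\rangle + r^3\,O\!\left(\|D^3\varphi\|_{L^\infty} + \frac{|D^2\varphi(x)|^2}{|\nabla\varphi(x)|}\right),
\]
and an analogous identity for the infimum with the opposite sign on the linear term. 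Averaging kills the linear $|\nabla\varphi(x)|$ contribution and leaves $\tfrac{(\gamma r)^2}{2}\langle D^2\varphi(x)\nu,\nu\rangle$ plus the same error.

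Combining the three pieces with their weights in $\mathcal{M}_r$, the constants line up exactly: $\beta\gamma^2/2 = (p-2)$ and $(1-\beta)\gamma^2/(2(d+2))=1$, so the second-order part reads $r^2\bigl((p-2)\langle D^2\varphi(x)\nu,\nu\rangle + \Delta\varphi(x)\bigr)=r^2\Delta_p^{\mathrm{N}}\varphi(x)$, and the total error is bounded by $K\bigl(\|D^3\varphi\|_{L^\infty(B_{\overline{R}}(x))} + |D^2\varphi(x)|^2/|\nabla\varphi(x)|\bigr)r^3$, matching the claim after factoring out $r^2$. The main technical obstacle is the sharp perturbative identification of $\nu^{*}$: one must track that the tangential correction to the extremal direction produces a \emph{quadratic} loss in the linear term (hence the $r^3$ error), and that the coefficient of this loss is exactly $|D^2\varphi(x)|^2/|\nabla\varphi(x)|$; this is the only place where the non-standard term in the bound arises, the rest being routine Taylor remainder estimates.
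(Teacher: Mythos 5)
Your argument is sound and your error tracking is careful. The paper does not actually supply its own proof of this lemma; it is quoted as a quantitative refinement of Theorem~\ref{thm:MaPaRo10} and attributed to Exercise~3.7 in~\cite{Lew20}, so there is no in-paper proof to compare against. Your derivation is the standard one that that exercise intends: third-order Taylor, exact second-moment computation for the solid-ball average, and a Lagrange-multiplier perturbation of the extremizing direction on the sphere. The one place that genuinely needs care — and you handle it correctly — is quantifying the displacement $\delta=\nu^*\mp\nu$ of the extremal direction: the tangential component of $\delta$ is $O(r|D^2\varphi|/|\nabla\varphi|)$, its normal component is $O(|\delta|^2)$ by the unit-sphere constraint, and both feed a loss of order $r^3|D^2\varphi|^2/|\nabla\varphi|$ into the linear and quadratic terms, which is precisely where the nonstandard $|D^2\varphi(x)|^2/|\nabla\varphi(x)|$ factor in $E_1$ comes from. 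The constant bookkeeping $\beta\gamma^2/2=p-2$ and $(1-\beta)\gamma^2/(2(d+2))=1$ also checks out. In short: correct, complete, and matching the approach the paper implicitly relies on.
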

We will also use the following quantitative estimate for the approximation of the modulus of the gradient. 
\begin{lemma}\label{lem:asexp-gradmod}
Let $d\in \mathbb{N}$, $R>0$ and $\varphi\in C^3(B_{R}(x))$. Then, for all $r\in(0,R/2)$, we have that
\begin{align*}
   \sup_{B_{r}(x)}\varphi - \varphi(x) = r|\nabla \varphi(x)| + r E_2(\varphi,r)
\end{align*}
with
\[
|E_2(\varphi,r)|\leq K \|D^{2}\varphi\|_{L^\infty(B_{R/2}(x)) } r
\]
where $K$ is a positive constant depending only on $p$ and $d$.
\end{lemma}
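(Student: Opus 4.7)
The plan is to obtain matching upper and lower bounds for $\sup_{B_r(x)}\varphi - \varphi(x)$ by applying a second-order Taylor expansion with uniform control on the remainder. Although the statement requires $\varphi\in C^3$, all that is actually needed is $C^2$ regularity together with a bound on $\|D^2\varphi\|_{L^\infty(B_{R/2}(x))}$.

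For any $y\in \R^d$ with $|y|\leq r<R/2$, the segment $[x,x+y]$ lies in $B_{R/2}(x)$, so Taylor's theorem in Lagrange form gives
\[
\varphi(x+y) = \varphi(x) + \nabla\varphi(x)\cdot y + \tfrac{1}{2}\, y^{T}D^2\varphi(\xi_y)\, y,
\]
for some $\xi_y$ on that segment. Using the gradient bound $\nabla\varphi(x)\cdot y\leq |\nabla\varphi(x)|\,|y|$ and the operator-norm bound on $D^2\varphi$, one obtains
\[
\varphi(x+y) - \varphi(x) \leq r|\nabla\varphi(x)| + \tfrac{1}{2}\|D^2\varphi\|_{L^\infty(B_{R/2}(x))}\, r^2.
\]
Taking the supremum over $y\in B_r$ gives the upper bound of the form $r|\nabla\varphi(x)| + r\cdot E_2^{+}(\varphi,r)$ with $|E_2^+(\varphi,r)|\leq \tfrac{1}{2}\|D^2\varphi\|_{L^\infty(B_{R/2}(x))}\, r$.

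For the matching lower bound, I would split on whether $\nabla\varphi(x)$ vanishes. If $\nabla\varphi(x)\neq 0$, plug in the specific direction $y^{*} \coloneqq r\,\nabla\varphi(x)/|\nabla\varphi(x)|$, which lies on $\partial B_r(x-x)$ and satisfies $\nabla\varphi(x)\cdot y^{*} = r|\nabla\varphi(x)|$. Taylor's formula then yields
\[
\sup_{B_r(x)}\varphi - \varphi(x) \;\geq\; \varphi(x+y^*) - \varphi(x) \;\geq\; r|\nabla\varphi(x)| - \tfrac{1}{2}\|D^2\varphi\|_{L^\infty(B_{R/2}(x))}\,r^2.
\]
If $\nabla\varphi(x)=0$, the identity to prove reduces to showing that $\sup_{B_r(x)}\varphi-\varphi(x)$ is bounded by a multiple of $\|D^2\varphi\|_{L^\infty(B_{R/2}(x))}\, r^2$, which follows directly from Taylor applied at each $y$. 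In both cases, combining with the upper bound and rearranging gives
\[
\bigl|\sup_{B_r(x)}\varphi - \varphi(x) - r|\nabla\varphi(x)|\bigr| \leq \tfrac{1}{2}\|D^2\varphi\|_{L^\infty(B_{R/2}(x))}\, r^2,
\]
which is exactly the claim with $K=\tfrac{1}{2}$.

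There is no substantive obstacle here; the only mild subtlety is ensuring that the sup-approximation is sharp enough to match the remainder scale $r^2$, which is handled by the explicit choice $y^*$ aligned with $\nabla\varphi(x)$. The $C^3$ hypothesis stated in the lemma is stronger than needed and is presumably inherited from the surrounding framework (e.g.\ to pair cleanly with \Cref{lem:asexp-plapnorm}) rather than being essential to this particular statement.
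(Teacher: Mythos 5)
Your proof is correct. The paper in fact omits the proof entirely, remarking only that it is standard; your second-order Taylor expansion with the sharp lower bound obtained by testing against $y^* = r\,\nabla\varphi(x)/|\nabla\varphi(x)|$ is precisely the standard argument the authors have in mind, and your observation that only $C^2$ regularity (with a $D^2$ bound) is genuinely needed is also accurate.
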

The proof of the above result is standard and we omit it. We are now ready to prove \Cref{thm:main-p-greater2}\eqref{thm:main-p-greater2-item3}, that we restate here for convenience.

\begin{proposition}\label{pro:asexp-greater2-quantitative}
Let $d\in \mathbb{N}$, $p>2$, $\veps_0\in(0,1)$, $x\in \R^d$, and $\varphi\in C^3(B_{\veps_0}(x))$ such that $\nabla\varphi(x)\not=0$. Assume \eqref{as:trunc} and define, for  $\veps<\veps_0$ small enough, the average
     \[
 \mathcal{A}_\veps[\varphi](x)\coloneqq\left\{\begin{aligned}
 \mathcal{A}_\veps^+[\varphi](x) \quad \textup{if} \quad \Delta_p\varphi(x)\geq0,\\
  \mathcal{A}_\veps^-[\varphi](x) \quad \textup{if} \quad \Delta_p\varphi(x)<0.
 \end{aligned}\right.
\]
Then, there exists $\overline{\veps}$ small enough such that for all $\veps\in(0,\overline{\veps})$, we have that
\begin{align*}
 \mathcal{A}_\veps[\varphi](x)-\varphi(x)=\veps^2 J_p(\Delta_p\varphi(x)) +\veps^2E(\varphi,\veps),
\end{align*}
with 
\begin{align}\label{eq:quanterror}
|E(\varphi,\veps)|\leq&  K_1 \|D^2\varphi\|_{L^\infty(B_{\overline{\veps}}(x))} \veps^{\frac{2(p-2)}{p}} + K_2\left(\|\nabla\varphi\|_{L^\infty(B_{\overline{\veps}}(x))}+\|D^{3}\varphi\|_{L^\infty(B_{\overline{\veps}}(x)) }+ \frac{|D^2\varphi(x)|^2}{|\nabla\varphi(x)|}\right) \veps^{\frac{2}{3p-4}},
\end{align}
where $K_1$ and $K_2$ are positive constants depending only on $p$ and $d$.
\end{proposition}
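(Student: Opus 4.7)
The plan is to mirror the two-step structure of the proof of \Cref{pro:asexp-greater2} (sign flip plus substitution of asymptotic expansions into the definition of $\mathcal{A}_\veps^{\pm}$), but now replacing the $o$-type remainders of \eqref{eq:plapnasexp_littleo} and \eqref{eq:gradasexpsup_littleo} by the quantitative estimates of \Cref{lem:asexp-plapnorm} and \Cref{lem:asexp-gradmod}, and tracking the $c$-dependence uniformly over the compact interval $[m(\veps),M(\veps)]$. As a first step I would reduce to the case $\Delta_p\varphi(x)\geq 0$ by replacing $\varphi$ with $\psi=-\varphi$: exactly as in Step 2 of the proof of \Cref{pro:asexp-greater2}, one has $\Delta_p\psi(x)=-\Delta_p\varphi(x)$, $\mathcal{A}_\veps^+[\psi](x)=-\mathcal{A}_\veps^-[\varphi](x)$, and the derivative norms of $\psi$ coincide with those of $\varphi$, so any quantitative bound proved for $\psi$ transfers verbatim to $\varphi$.

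Write $A=|\nabla\varphi(x)|>0$ and $B=\Delta_p^{\textup{N}}\varphi(x)\geq 0$. Substituting \Cref{lem:asexp-gradmod} (applied with $r=\veps^2 c^{1-\alpha}$) and \Cref{lem:asexp-plapnorm} (applied with $r=\veps c^{-\alpha/2}$) into the definition of $\mathcal{A}_\veps^+$ yields
\[
\frac{\mathcal{A}_\veps^+[\varphi](x)-\varphi(x)}{\veps^2}=\inf_{c\in[m(\veps),M(\veps)]}\Big\{\alpha c^{1-\alpha}A+(1-\alpha)c^{-\alpha}B+e(\varphi,\veps,c)\Big\},
\]
with $e(\varphi,\veps,c)=\alpha c^{1-\alpha}E_2(\varphi,\veps^2 c^{1-\alpha})+(1-\alpha)c^{-\alpha}E_1(\varphi,\veps c^{-\alpha/2})$. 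Using the elementary inequality $|\inf(h_1+h_2)-\inf h_1|\leq\sup|h_2|$, the problem then splits into (i) bounding $\sup_{c}|e(\varphi,\veps,c)|$, and (ii) computing $\inf_c\{\alpha c^{1-\alpha}A+(1-\alpha)c^{-\alpha}B\}$.

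For (i), the quantitative bound on $E_2$ gives $|\alpha c^{1-\alpha}E_2(\varphi,\veps^2 c^{1-\alpha})|\leq K\alpha\|D^2\varphi\|_{L^\infty}\veps^2 c^{2(1-\alpha)}$, whose supremum on $[m(\veps),M(\veps)]$ is attained at $c=M(\veps)=\veps^{-2/(2-\alpha)}$; a direct calculation with $\alpha=(p-2)/(p-1)$ produces the exponent $2\alpha/(2-\alpha)=2(p-2)/p$, matching the first term of \eqref{eq:quanterror}. Similarly, the bound on $E_1$ gives $|(1-\alpha)c^{-\alpha}E_1(\varphi,\veps c^{-\alpha/2})|\leq K(1-\alpha)\bigl(\|D^3\varphi\|_{L^\infty}+|D^2\varphi(x)|^2/|\nabla\varphi(x)|\bigr)\veps c^{-3\alpha/2}$, maximized at $c=m(\veps)=\veps^{2/(2+\alpha)}$, producing the exponent $(2-2\alpha)/(2+\alpha)=2/(3p-4)$, which matches (part of) the second term of \eqref{eq:quanterror}.

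For (ii) I distinguish two subcases. If $\Delta_p\varphi(x)>0$, then $B>0$ and the unconstrained minimizer of $c\mapsto\alpha c^{1-\alpha}A+(1-\alpha)c^{-\alpha}B$ is $c^{*}=B/A$ with value $A^\alpha B^{1-\alpha}=J_p(\Delta_p\varphi(x))$; choosing $\overline{\veps}$ small enough (depending on $A,B$) guarantees $c^{*}\in[m(\veps),M(\veps)]$, so no extra term is generated. If instead $\Delta_p\varphi(x)=0$, then $B=0$, $J_p(\Delta_p\varphi(x))=0$, and the main term reduces to the increasing function $c\mapsto\alpha c^{1-\alpha}A$, whose infimum on $[m(\veps),M(\veps)]$ is $\alpha A\,m(\veps)^{1-\alpha}=\alpha A\veps^{2/(3p-4)}$; this leftover is controlled by $K\|\nabla\varphi\|_{L^\infty}\veps^{2/(3p-4)}$, which is precisely why the $\|\nabla\varphi\|_{L^\infty}$ factor appears in \eqref{eq:quanterror}. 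The main technical obstacle is exactly this $\Delta_p\varphi(x)=0$ borderline case: there the unconstrained optimizer sits at $c=0$, so one must verify that truncating to $[m(\veps),M(\veps)]$ injects only a remainder compatible with \eqref{eq:quanterror}, and this hinges on the precise calibration of the exponents in the definition \eqref{as:trunc} of $m(\veps)$ and $M(\veps)$.
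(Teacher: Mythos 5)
Your proof is correct and follows the same two-stage strategy as the paper (sign-flip to reduce to $\Delta_p\varphi(x)\geq 0$; substitute the quantitative expansions from \Cref{lem:asexp-plapnorm} and \Cref{lem:asexp-gradmod}; track the error uniformly over $c\in[m(\veps),M(\veps)]$), and your exponent computations $2\alpha/(2-\alpha)=2(p-2)/p$ and $2(1-\alpha)/(2+\alpha)=2/(3p-4)$ are right. The one place you genuinely diverge from the paper is in handling the error coming from truncating the infimum to $[m(\veps),M(\veps)]$. The paper invokes \Cref{lem:gm-am-aprox}, which gives the universal bound $\alpha a\,m^{1-\alpha}+(1-\alpha)b\,M^{-\alpha}$ on the truncation discrepancy no matter where the unconstrained minimizer $c^*=b/a$ sits relative to $[m,M]$; this yields the $\|\nabla\varphi\|_{L^\infty}\veps^{2/(3p-4)}$ and $\|D^2\varphi\|_{L^\infty}\veps^{2(p-2)/p}$ contributions directly, and allows $\overline{\veps}$ to be chosen without reference to the value of $\Delta_p\varphi(x)$. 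You instead split the infimum into a main term and an error term, then handle the main term by a case analysis: if $B=\Delta_p^{\textup{N}}\varphi(x)>0$ you shrink $\overline{\veps}$ to force $c^*=B/A$ inside $[m(\veps),M(\veps)]$ (getting zero truncation error), and if $B=0$ you compute the leftover $\alpha A\,m(\veps)^{1-\alpha}$ explicitly. Both routes establish the stated bound. The cost of yours is that in the $B>0$ branch $\overline{\veps}$ picks up a dependence on the ratio $\Delta_p^{\textup{N}}\varphi(x)/|\nabla\varphi(x)|$, which is acceptable for the proposition as literally stated but is less robust than the paper's uniform bound: in the downstream convergence argument (Step 1, Case 4) the estimate is applied at points $y_n$ with $\Delta_p\varphi(y_n)\to 0$, precisely where the $B$-independence of $\overline{\veps}$ from \Cref{lem:gm-am-aprox} pays off. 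Replacing your part (ii) case analysis by a direct application of \Cref{lem:gm-am-aprox} would eliminate this dependence and make your proof coincide with the paper's.
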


\begin{proof}
    We prove only the case $\Delta_p\varphi(x)\geq0$ since the case $\Delta_p\varphi(x)<0$ follows by replacing $\varphi$ by $-\varphi$ as in the proof of \Cref{pro:asexp-greater2}. We observe first that, since $\nabla\varphi(x)\not=0$, then $\Delta_p \varphi(x)= |\nabla \varphi(x)|^{p-2} \Delta_p^{\textup{N}}\varphi(x)$ with $\Delta_p^{\textup{N}}\varphi(x)\geq0$. We can use now \Cref{lem:gm-am-aprox} to get
\begin{align*}
    J_p(\Delta_p\varphi(x))&=   |\nabla \varphi(x)|^{\alpha} (\Delta_p^{\textup{N}}\varphi(x))^{1-\alpha}\\
    &=\inf_{c\in[m(\veps),M(\veps)]}\{\alpha c^{1-\alpha} |\nabla \varphi(x)| + (1-\alpha) c^{-\alpha}\Delta_p^{\textup{N}}\varphi(x)\} + E_3(\varphi,\veps),
\end{align*}
where $E_3(\varphi,\veps)$ can be bounded using the choice $m(\veps)=\veps^{\frac{2}{2+\alpha}}$ and $M(\veps)=\veps^{-\frac{2}{2-\alpha}}$ given by \eqref{as:trunc} in the following way:
\[
|E_3(\varphi,\veps)|\leq |\nabla\varphi(x)|\veps^{\frac{2(1-\alpha)}{2+\alpha}} + \Delta_p^{\textup{N}}\varphi(x)\veps^{\frac{2\alpha}{2-\alpha}}  \leq \|\nabla\varphi\|_{L^\infty(B_{\overline{\veps}}(x))}\veps^{\frac{2}{3p-4}} + \|D^2\varphi\|_{L^\infty(B_{\overline{\veps}}(x))}\veps^{\frac{2(p-2)}{p}} .
\]
We use now Lemma \ref{lem:asexp-plapnorm} and Lemma \ref{lem:asexp-gradmod} to obtain,
\begin{align*}
    (\Delta_p\varphi(x))^{\frac{1}{p-1}}
    =&\inf_{c\in[m(\veps),M(\veps)]}\Bigg\{\alpha c^{1-\alpha} \left( \frac{\displaystyle\sup_{B_{\veps^2c^{1-\alpha}}(x)}\varphi - \varphi(x)}{\veps^2c^{1-\alpha}} - E_2(\varphi,\veps^2c^{1-\alpha})\right) \\
    &\qquad \qquad \qquad \, + (1-\alpha) c^{-\alpha}\left(\frac{\mathcal{M}_{\veps c^{-\frac{\alpha}{2}}}[\varphi](x) - \varphi(x)}{\veps^2c^{-\alpha}}-E_1(\varphi,\veps c^{-\frac{\alpha}{2}}) \right)\Bigg\} + E_3(\varphi,\veps),
\end{align*}
that is,
\begin{align*}
   \veps^2 (\Delta_p\varphi(x))^{\frac{1}{p-1}}
    =&\!\!\!\inf_{c\in[m(\veps),M(\veps)]}\Bigg\{\alpha \sup_{B_{\veps^2c^{1-\alpha}}(x)}\varphi +(1-\alpha) \mathcal{M}_{\veps c^{-\frac{\alpha}{2}}}[\varphi](x) \\
    &\qquad \quad \quad \,- \veps^2 \alpha c^{1-\alpha} E_2(\varphi,\veps^2c^{1-\alpha}) -\veps^2(1-\alpha) c^{-\alpha}E_1(\phi,\veps c^{-\frac{\alpha}{2}}) \Bigg\}-\varphi(x) + \veps^2E_3(\varphi,\veps)\\
    =&\!\!\!\inf_{c\in[m(\veps),M(\veps)]}\Bigg\{\alpha \sup_{B_{\veps^2c^{1-\alpha}}(x)}\varphi +(1-\alpha) \mathcal{M}_{\veps c^{-\frac{\alpha}{2}}}[\varphi](x)\Bigg\}-\varphi(x) + \veps^2E_4(\varphi,\veps)+  \veps^2E_3(\varphi,\veps).
\end{align*}
where
\begin{align*}
    |E_4(\varphi,\veps)| &\leq \!\!\! \sup_{c\in [m(\veps),M(\veps)]}\big\{\alpha c^{1-\alpha} |E_2(\varphi,\veps^2c^{1-\alpha})| +(1-\alpha) c^{-\alpha}|E_1(\phi,\veps c^{-\frac{\alpha}{2}})|\big\}\\
    &\leq C \!\!\! \sup_{c\in [m(\veps),M(\veps)]}\Bigg\{  \|D^{2}\varphi\|_{L^\infty(B_{\overline{\veps}}(x))} \veps^2c^{2(1-\alpha)} +\left(\|D^{3}\varphi\|_{L^\infty(B_{\overline{\veps}}(x)) }+ \frac{|D^2\varphi(x)|^2}{|\nabla\varphi(x)|}\right) \veps c^{-\frac{3\alpha}{2}}\Bigg\}\\
    & \leq C \! \left(\|D^{2}\varphi\|_{L^\infty(B_{\overline{\veps}}(x))} \veps^{2}M(\veps)^{2(1-\alpha)}+ \left(\|D^{3}\varphi\|_{L^\infty(B_{\overline{\veps}}(x)) }+ \frac{|D^2\varphi(x)|^2}{|\nabla\varphi(x)|}\right) \veps m(\veps)^{-\frac{3}{2}\alpha }   \right)\\
    &= C \! \left(\|D^{2}\varphi\|_{L^\infty(B_{\overline{\veps}}(x))} \veps^{\frac{2(p-2)}{p}}+ \left(\|D^{3}\varphi\|_{L^\infty(B_{\overline{\veps}}(x)) }+ \frac{|D^2\varphi(x)|^2}{|\nabla\varphi(x)|}\right) \veps^{\frac{2}{3p-4}}   \right),
\end{align*}
where $C$ is a positive constant depending only on $p$ and $d$. This concludes the proof.
\end{proof}

\subsection{Asymptotic mean value characterization of viscosity 
 solutions} The first goal of this section is to prove the characterization of smooth sub and supersolutions given by \Cref{thm:subsupersmooth-bis}. We only prove it in the case $f(x)\geq0$ since the result for $f(x)\leq 0$ follows by
 replacing $f$ by $-f$ and $\varphi$ by $-\varphi$ as in the proof of \Cref{pro:asexp-greater2}.  Again, we restate the result here for convenience.

\begin{proposition}\label{prop:subsupersmooth}
  Let $d\in \mathbb{N}$, $p>2$, $x\in \R^d$, $\varphi\in C^3(B_{R}(x))$ for some $R>0$, and $f\in C(B_{R}(x))$ such that $f(x)\geq0$. Additionally, assume that both $\nabla\varphi(x)\not=0$ and $\Delta_p\varphi(x)\not=0$ if $f(x)=0$.  Assume also that  $\veps>0$ and that \eqref{as:trunc} holds.
  
Then, 
\begin{eqnarray}
        &\Delta_p \varphi(x)\leq f(x) \label{eq:smoothsuper-plap}\\
        (\textup{resp. } &\Delta_p \varphi(x)\geq f(x) )\label{eq:smoothsub-plap}
\end{eqnarray}
if and only if 
\begin{eqnarray}
        \mathcal{A}_\veps^+[\varphi](x)\leq \varphi(x)+\veps^2 J_p(f(x)) +o(\veps^2) \quad \textup{as} \quad &\veps\to0^+ \label{eq:smoothsuper-MVP} \\
        (\textup{resp. } \mathcal{A}_\veps^+[\varphi](x)\geq \varphi(x)+\veps^2 J_p(f(x)) +o(\veps^2) \quad \textup{as} \quad &\veps\to0^+)  \label{eq:smoothsub-MVP} .
\end{eqnarray}
\end{proposition}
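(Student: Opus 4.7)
The plan is to prove all four implications of the biconditional by case analysis on the sign of $\Delta_p\varphi(x)$, using \Cref{pro:asexp-greater2} and its quantitative counterpart \Cref{pro:asexp-greater2-quantitative}, together with the strict monotonicity of $J_p$. A useful preliminary observation is that for $p>2$ and $\varphi\in C^3$, the identity $\Delta_p\varphi=|\nabla\varphi|^{p-2}\Delta_p^{\textup{N}}\varphi$ forces $\Delta_p\varphi(x)=0$ whenever $\nabla\varphi(x)=0$; hence $\Delta_p\varphi(x)\neq 0$ automatically entails $\nabla\varphi(x)\neq 0$, which provides the non-degeneracy required by both expansion results.

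For the forward implication \eqref{eq:smoothsuper-plap}$\Rightarrow$\eqref{eq:smoothsuper-MVP}, I would argue by a three-way split. If $\Delta_p\varphi(x)>0$, \Cref{pro:asexp-greater2} gives an equality and monotonicity of $J_p$ yields the bound. If $\Delta_p\varphi(x)=0$, the standing assumption forces $f(x)>0$; when $\nabla\varphi(x)\neq 0$ the quantitative \Cref{pro:asexp-greater2-quantitative} yields $\mathcal{A}_\veps^+[\varphi](x)=\varphi(x)+o(\veps^2)$, and when $\nabla\varphi(x)=0$ I would choose $c=M(\veps)$ in the infimum and use the crude $O(r^2)$ bounds for $\sup_{B_r(x)}\varphi-\varphi(x)$ and $\mathcal{M}_r[\varphi](x)-\varphi(x)$ (valid at critical points) together with \eqref{as:trunc} to obtain $\mathcal{A}_\veps^+[\varphi](x)-\varphi(x)=O(\veps^{4/(2-\alpha)})=o(\veps^2)$; in either sub-case the positivity of $J_p(f(x))$ gives \eqref{eq:smoothsuper-MVP}. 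If $\Delta_p\varphi(x)<0$, so that $\Delta_p^{\textup{N}}\varphi(x)<0$, I would instead choose $c=m(\veps)$; combined with \eqref{eq:plapnasexp_littleo}--\eqref{eq:gradasexpsup_littleo} this yields $\mathcal{A}_\veps^+[\varphi](x)-\varphi(x)\leq(1-\alpha)\veps^2 m(\veps)^{-\alpha}\Delta_p^{\textup{N}}\varphi(x)(1+o(1))$, whose normalized deviation tends to $-\infty$ and hence trivially satisfies \eqref{eq:smoothsuper-MVP}. The converse \eqref{eq:smoothsuper-MVP}$\Rightarrow$\eqref{eq:smoothsuper-plap} follows by contraposition: $\Delta_p\varphi(x)>f(x)\geq 0$ forces $\Delta_p\varphi(x)>0$, \Cref{pro:asexp-greater2} applies, and strict monotonicity of $J_p$ produces a positive constant gap that cannot be absorbed into the $o(1)$ error.

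The equivalence \eqref{eq:smoothsub-plap}$\Leftrightarrow$\eqref{eq:smoothsub-MVP} is handled analogously. Its forward direction is actually easier: $\Delta_p\varphi(x)\geq f(x)\geq 0$ combined with the standing hypothesis excludes $\Delta_p\varphi(x)=0$, placing us directly in the regime $\Delta_p\varphi(x)>0$ of \Cref{pro:asexp-greater2}. The converse is again by contraposition with $\Delta_p\varphi(x)<f(x)$, following the same three-case split and in each producing an upper bound on $\mathcal{A}_\veps^+[\varphi](x)-\varphi(x)$ strictly smaller than $\veps^2 J_p(f(x))+o(\veps^2)$. The main obstacle is the asymmetry between the operator $\mathcal{A}_\veps^+$ (whose natural asymptotic expansion targets the regime $\Delta_p\varphi(x)\geq 0$) and the possibility that $\Delta_p\varphi(x)<0$; the key technical maneuver is that the infimum over $c\in[m(\veps),M(\veps)]$ permits the specific choice $c=m(\veps)$, which activates the term $(1-\alpha)c^{-\alpha}\Delta_p^{\textup{N}}\varphi(x)$ with a large negative prefactor when $\Delta_p^{\textup{N}}\varphi(x)<0$, producing an upper estimate of order $-\veps^{4/(2+\alpha)}$ that is more negative than any $o(\veps^2)$ quantity and thereby both makes the $\leq$ inequality trivial and yields the desired contradiction in the supersolution converse.
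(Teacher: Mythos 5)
Your proof is correct and follows essentially the same route as the paper's: dispatch the regime $\Delta_p\varphi(x)>0$ via Proposition \ref{pro:asexp-greater2}, handle $\Delta_p\varphi(x)=0$ with the quantitative Proposition \ref{pro:asexp-greater2-quantitative} (or the crude $c=M(\veps)$ bound at critical points), and use the endpoint choice $c=m(\veps)$ to make the $\nplap\varphi(x)<0$ regime collapse to $-\infty$ at scale $\veps^2$. The only cosmetic differences are that the paper disposes of $\Delta_p\varphi(x)>0$ once at the outset and then treats all of $\Delta_p\varphi(x)\le 0$ uniformly with a single $\delta$-argument rather than your finer $\{=0\}$ versus $\{<0\}$ split, and it observes that the converse implications become vacuous in that restricted regime rather than arguing by contraposition as you do; both variants are valid and interchangeable. (One small inexactness in your writeup: with $c=m(\veps)$ the upper bound is $\alpha\veps^2 m(\veps)^{1-\alpha}|\nabla\varphi(x)|+(1-\alpha)\veps^2 m(\veps)^{-\alpha}\nplap\varphi(x)+o(\cdot)$, not simply the second term; the conclusion survives because the first term is $o(\veps^2)$ and the second term dominates negatively.)
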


\begin{proof} 
First, let us note that if $\Delta_p\varphi(x)>0$ (and thus, $\nabla\varphi(x)\not=0$), then the equivalence follows directly from \Cref{pro:asexp-greater2}. From now, let us assume that $\Delta_p\varphi(x)\leq0$.

    \textbf{Case 1:} Let us prove the equivalence \eqref{eq:smoothsuper-plap}$\iff$\eqref{eq:smoothsuper-MVP}.

\noindent \textbf{Step 1: Proof of \eqref{eq:smoothsuper-plap} $\implies$ \eqref{eq:smoothsuper-MVP}.} On one hand, if $\nabla\varphi(x)=0$, we have 
   \begin{align*}
 \frac{\mathcal{A}_\veps^+[\varphi](x)- \varphi(x)}{\veps^2}
 &\leq \inf_{c\in [m(\veps),M(\veps)]}\Big\{\alpha c^{1-\alpha} \delta + (1-\alpha)c^{-\alpha} K\Big\}= \delta^\alpha K^{1-\alpha} + o_\veps(1).
 \end{align*}
 Note that \eqref{eq:smoothsuper-MVP} follows since $\delta>0$ is arbitrary small and $f(x)\geq0$. On the other hand, if $\nabla\varphi(x)\not=0$,  then $\nplap \varphi(x)$ is well defined and $\nplap \varphi(x)\leq0$ since $\plap \varphi(x)\leq0$. Consequently, 
\begin{align*}
 \frac{\mathcal{A}_\veps^+[\varphi](x)- \varphi(x)}{\veps^2}
 &\leq \inf_{c\in [m(\veps),M(\veps)]}\Big\{\alpha c^{1-\alpha} K + (1-\alpha)c^{-\alpha} \delta \Big\}= K^\alpha \delta^{1-\alpha} + o_\veps(1),
 \end{align*}
 and \eqref{eq:smoothsuper-MVP} follows. 
 
\noindent \textbf{Step 2: Proof of \eqref{eq:smoothsuper-MVP} $\implies$ \eqref{eq:smoothsuper-plap}.} There is actually nothing to prove here since we are assuming that $\Delta_p \varphi(x)\leq 0$, so we trivially have $\Delta_p \varphi(x)\leq f(x)$ since $f(x)\geq0$.

 \textbf{Case 2:} Let us prove the equivalence \eqref{eq:smoothsub-plap}$\iff$\eqref{eq:smoothsub-MVP}.
 
 \noindent \textbf{Step 1: Proof of \eqref{eq:smoothsub-plap} $\implies$ \eqref{eq:smoothsub-MVP}.}
 If $f(x)>0$, then, by \eqref{eq:smoothsub-plap}, $\plap\varphi(x)\geq f(x)>0$, which is a contradiction with the fact that we only need to check the case $\plap\varphi(x)\leq0$. On the other hand, if $f(x)=0$, then, by assumption, we have that   $\Delta_p\varphi(x)\not=0$, and thus, $\plap\varphi(x)<0$. This is a contradiction with \eqref{eq:smoothsub-plap}, and the proof is completed.

 \noindent \textbf{Step 2: Proof of \eqref{eq:smoothsub-MVP} $\implies$ \eqref{eq:smoothsub-plap}.} We are already assuming that $\plap \varphi(x)\leq 0$. Let us assume first, by contradiction, that $ \plap \varphi(x) <0$ (and thus $\nabla\varphi(x)\not=0$ and $\nplap\varphi(x) <0$). Then,
    \begin{align*}
 \frac{\mathcal{A}_\veps^+[\varphi](x)- \varphi(x)}{\veps^2} &=\!\!\! \inf_{c\in [m(\veps),M(\veps)]}\left\{\alpha \left(\frac{\displaystyle \sup_{B_{\veps^2c^{1-\alpha}}(x)} \varphi-\varphi(x)}{\veps^2}\right) + (1-\alpha)\left( \frac{\mathcal{M}_{\veps c^{-\frac{\alpha}{2}}}[\varphi](x)-\varphi(x)}{\veps^2} \right)\right\}\\
 &= \!\!\!\inf_{c\in [m(\veps),M(\veps)]}\left\{\alpha c^{1-\alpha} \left(\left|\nabla \varphi(x)\right| + \frac{o(\veps^2c^{1-\alpha})}{\veps^2c^{1-\alpha}} \right) + (1-\alpha)c^{-\alpha} \left(\nplap \varphi(x) + \frac{o(\veps^2 c^{-\alpha})}{\veps^2c^{-\alpha}}\right)\right\}\\
 &\leq \!\!\!\inf_{c\in [m(\veps),M(\veps)]}\Big\{\alpha c^{1-\alpha} \left(2\left|\nabla \varphi(x)\right|\right) + (1-\alpha)c^{-\alpha} \left(\nplap \varphi(x)/2 \right)\Big\}\\
 &\leq \alpha m(\veps)^{1-\alpha} \left(2\left|\nabla \varphi(x)\right|\right) + (1-\alpha)m(\veps)^{-\alpha} \left(\nplap \varphi(x)/2 \right) \\
 &<-1.
 \end{align*}
 where the last inequality follows from the fact that $m(\veps)\to0^+$ as $\veps\to0^+$ and $\nplap\varphi(x)<0$. This is clearly a contradiction with \eqref{eq:smoothsub-MVP} since $f(x)\geq0$. Thus, we must have $\Delta_p\varphi(x)=0$. Let us assume now, by contradiction, that $\nabla\varphi(x)=0$. Then, by assumption, we have that  $f(x)>0$, which implies that
   \begin{align*}
 0<J_p(f(x))\leq \frac{\mathcal{A}_\veps^+[\varphi](x)- \varphi(x)}{\veps^2} + o_\veps(1)
 &\leq \inf_{c\in [m(\veps),M(\veps)]}\Big\{\alpha c^{1-\alpha} \delta + (1-\alpha)c^{-\alpha} K\Big\}= \delta^\alpha K^{1-\alpha} + o_\veps(1).
 \end{align*}
 From here, we reach a contradiction by letting $\veps\to0^+$ and using the arbitrariness of $\delta>0$. At this point, we only need to check the case $\nabla\varphi(x)\not=0$ and $\nplap \varphi(x)=0$. In this case, we can use the quantitative asymptotic expansion estimate given in \Cref{pro:asexp-greater2-quantitative}, that works also in the case when $\Delta_p\varphi(x)=0$ with $\nabla\varphi(x)\not=0$, to prove the desired result.
\end{proof}

Let us now comment on the proof of the mean value characterization of viscosity solutions given by \Cref{teo-f>0-intro-bis}. 

\begin{corollary}[Asymptotic mean value property] \label{teo-f>0-sect} 
       Let $d\in \mathbb{N}$, $p>2$, $\Omega\subset\R^d$ be an open set, and $f\in C(\Omega)$.  Assume also that  $\veps>0$ and that \eqref{as:trunc} holds. Let $\mathcal{A}_\veps[\varphi;f]$ be defined by either $\overline{\mathcal{A}}_\veps[\varphi;f]$ or $\underline{\mathcal{A}}_\veps[\varphi;f]$. 
Then, $u$ is a viscosity solution to 
\[
\Delta_p u(x)=f(x) \quad \textup{for} \quad x\in \Omega,
\]
if and only $u$ is a viscosity solution to
\begin{align*}
\begin{aligned}
&u(x) =\mathcal{A}_\veps[u;f](x) 
- \veps^2 J_p(f(x)) +o(\veps^2)  \quad \textup{for} \quad x\in \Omega, \quad \textup{as} \quad \veps\to0^+.
\end{aligned}
\end{align*}
\end{corollary}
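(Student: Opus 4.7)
The plan is to reduce this equivalence to the smooth pointwise characterization provided by Proposition \ref{prop:subsupersmooth} (in the case $f(x_0)\geq 0$, involving $\mathcal{A}_\veps^+$) together with its symmetric counterpart for $f(x_0)\leq 0$ (involving $\mathcal{A}_\veps^-$, obtained by applying Proposition \ref{prop:subsupersmooth} to $-\varphi$ and $-f$ as in Step~2 of the proof of Proposition \ref{pro:asexp-greater2}). Once the two viscosity definitions (the one for the PDE and the one for the asymptotic mean value formula, both in Appendix \ref{app:viscositydef}) are parsed, there is essentially nothing left to do: both are tested against the same class of admissible $C^3$ test functions and the pointwise equivalence of the two conditions on those test functions is exactly Proposition \ref{prop:subsupersmooth}.

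In detail: $u$ is a viscosity subsolution (resp.\ supersolution) of $\Delta_p u = f$ at $x_0$ iff for every admissible $\varphi\in C^3$ touching $u$ from above (resp.\ below) at $x_0$ one has $\Delta_p\varphi(x_0)\geq f(x_0)$ (resp.\ $\leq$). Analogously, $u$ is a viscosity sub/supersolution of the mean value equation at $x_0$ iff, for the same class of $\varphi$, the asymptotic inequality
\[
\mathcal{A}_\veps[\varphi;f](x_0) - \varphi(x_0) - \veps^2 J_p(f(x_0)) \geq o(\veps^2)
\quad\text{(resp.\ }\leq o(\veps^2)\text{)}
\]
holds as $\veps\to 0^+$. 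Admissibility (in both cases) requires $\nabla\varphi(x_0)\neq 0$ whenever $f(x_0)=0$, so the hypotheses of Proposition \ref{prop:subsupersmooth} are automatically met. When $f(x_0)>0$ both operators $\overline{\mathcal{A}}_\veps[\cdot;f]$ and $\underline{\mathcal{A}}_\veps[\cdot;f]$ coincide with $\mathcal{A}_\veps^+$ in a neighborhood of $x_0$, and Proposition \ref{prop:subsupersmooth} yields the equivalence directly. When $f(x_0)<0$, the symmetric statement of Proposition \ref{prop:subsupersmooth} gives the equivalence with $\mathcal{A}_\veps^-$, with which both $\overline{\mathcal{A}}_\veps[\cdot;f]$ and $\underline{\mathcal{A}}_\veps[\cdot;f]$ locally agree.

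The main obstacle is the degenerate set $\{f=0\}$, where $\overline{\mathcal{A}}_\veps[\cdot;f]$ and $\underline{\mathcal{A}}_\veps[\cdot;f]$ genuinely differ and one must argue that either definition produces the same viscosity class. This is precisely the point where the admissibility condition $\nabla\varphi(x_0)\neq 0$ is essential: it allows Proposition \ref{prop:subsupersmooth} to be invoked with $\mathcal{A}_\veps^+$ (viewing $f(x_0)=0$ as $f(x_0)\geq 0$) \emph{and} with $\mathcal{A}_\veps^-$ (viewing $f(x_0)=0$ as $f(x_0)\leq 0$). Hence, at such an $x_0$, the viscosity inequality for $\varphi$ against $\mathcal{A}_\veps^+$ is equivalent to $\Delta_p\varphi(x_0)\geq 0$ (or $\leq 0$), which is in turn equivalent to the analogous inequality against $\mathcal{A}_\veps^-$. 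The quantitative expansion of Proposition \ref{pro:asexp-greater2-quantitative} is the analytic input that makes this reconciliation work in the borderline case $\Delta_p\varphi(x_0)=0$, since there the asymptotic form of Proposition \ref{pro:asexp-greater2} is not available but Proposition \ref{pro:asexp-greater2-quantitative} still provides an $o(\veps^2)$ control. Combining these observations for both sub- and supersolutions gives the claimed equivalence for either choice of $\mathcal{A}_\veps[\cdot;f]$.
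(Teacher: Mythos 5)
Your proposal is correct and takes essentially the same route as the paper: reduce to the pointwise smooth characterization of Proposition \ref{prop:subsupersmooth}, match up the two viscosity definitions, and observe that at $\{f=0\}$ the admissibility restriction on test functions lets one invoke the proposition both as a ``$f\geq 0$'' and a ``$f\leq 0$'' statement, so that $\overline{\mathcal{A}}_\veps$ and $\underline{\mathcal{A}}_\veps$ yield the same viscosity class. The only small imprecision is your claim that both viscosity notions use literally the same class of admissible test functions: Definition \ref{def:viscsol-p-Laplace} uses $C^2$ tests with $\nabla\varphi(x_0)\neq 0$ at $\{f=0\}$, while Definition \ref{def:asMVPviscosity} uses $C^\infty$ tests with both $\nabla\varphi(x_0)\neq 0$ and $\Delta_p\varphi(x_0)\neq 0$ there; the paper reconciles these via Remarks \ref{rem:regulartest} and \ref{rem:novanplap}, which show the larger class in the PDE definition can be restricted without loss of generality.
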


\begin{proof}
The proof follows from the previous result, Proposition \ref{prop:subsupersmooth},
using the definitions of solution to both the PDE and the mean value property
stated in the Appendix \ref{app:viscositydef}. In fact, from Proposition \ref{prop:subsupersmooth}, we have that
$u$ is a viscosity supersolution to the PDE $\Delta_p u(x)=f(x)$ in the sense of Definition \ref{def:viscsol-p-Laplace} (every test function that touches $u$ from below verifies
$\Delta_p u(x)\leq f(x)$, notice that we restrict to test functions with non-vanishing gradient at points where $f(x)=0$) if and only if it is a viscosity supersolution to the mean value formula in the sense of Definition \ref{def:asMVPviscosity}. The same
holds for subsolutions proving the desired characterization of viscosity solutions.  
\end{proof}

\section{Dynamic programming principles for the $p$-Laplacian} \label{sect-dynamic}

The aim of this section is to prove \Cref{thm:DPPexsandconv}. Let us recall the framework. We consider the boundary value problem 
\begin{equation}\label{eq:DPP}\tag{DPP}
\left\{
\begin{aligned}
    u_\veps(x)&= \mathcal{A}_\veps[u_\veps;f](x)-\veps^2J_p(f(x)), &\textup{if}&\quad x\in  \Omega,\\
    u_\veps(x)&=g(x), &\textup{if}& \quad x\in \R^d\setminus\Omega,
\end{aligned}\right.
\end{equation}
with
\[
 \mathcal{A}_\veps[u_\veps;f](x):=\left\{\begin{aligned}
 \mathcal{A}_\veps^+[u_\veps](x) \quad \textup{if} \quad f(x)\geq0,\\
  \mathcal{A}_\veps^-[u_\veps](x) \quad \textup{if} \quad f(x)<0.
 \end{aligned}\right.
\]
\begin{remark}
    We will only prove \Cref{thm:DPPexsandconv} with $\mathcal{A}_\veps[u_\veps;f]$ as given above. The proof for \[
 \mathcal{A}_\veps[u_\veps;f](x):=\left\{\begin{aligned}
 \mathcal{A}_\veps^+[u_\veps](x) \quad \textup{if} \quad f(x)>0,\\
  \mathcal{A}_\veps^-[u_\veps](x) \quad \textup{if} \quad f(x)\leq 0,
 \end{aligned}\right.
\]
follows in a similar way.
\end{remark}

\subsection{Existence and uniqueness and properties of solutions}
We first prove a comparison principle that, in particular, implies uniqueness of solutions. 
\begin{lemma}\label{lem:comparisonDPP}
Let $d\in \N$, $p>2$, $\veps\in(0,1)$, $f,f_1,f_2\in C(\overline{\Omega})$ and $g_1,g_2\in C_{\textup{b}}(\R^d\setminus\Omega)$. Assume \eqref{as:trunc2}. Consider two bounded Borel functions $\overline{u},\underline{u} \colon \R^d \to \R$ such that 
\begin{equation*}
\left\{
\begin{aligned}
    \overline{u}(x)&\geq \mathcal{A}_\veps[\overline{u};f](x)-\veps^2J_p(f_1(x)), &\textup{if}&\quad x\in  \Omega,\\
    \overline{u}(x)&\geq g_1(x), &\textup{if}& \quad x\in \R^d\setminus\Omega,
\end{aligned}\right.
\end{equation*}
and
\begin{equation*}
    \left\{
\begin{aligned}
    \underline{u}(x)&\leq \mathcal{A}_\veps[\underline{u};f](x)-\veps^2J_p(f_2(x)), &\textup{if}&\quad x\in  \Omega,\\
    \underline{u}(x)&\leq g_2(x), &\textup{if}& \quad x\in \R^d\setminus\Omega.
\end{aligned}\right.
\end{equation*}
If $f_1\leq f_2$ and $g_1\geq g_2$, then $\overline{u}\geq \underline{u}$ in $\R^d$. In particular, there exists at most one bounded Borel function $u$ satisfying \eqref{eq:DPP}. 
\end{lemma}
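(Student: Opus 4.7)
The plan is to argue by contradiction via a strong minimum principle / propagation argument along chains of balls. Set $w \coloneqq \underline{u} - \overline{u}$ and $M \coloneqq \sup_{\R^d} w$, and suppose for contradiction that $M > 0$. Since $g_1 \geq g_2$, we have $w \leq 0 < M$ on $\R^d \setminus \Omega$, so $M$ is approximated by a sequence inside $\Omega$. Subtracting the two DPP inequalities at a point $x \in \Omega$ and using that $J_p$ is non-decreasing together with $f_1 \leq f_2$, we obtain
\begin{align*}
w(x) \leq \mathcal{A}_\veps[\underline{u};f](x) - \mathcal{A}_\veps[\overline{u};f](x),
\end{align*}
where the operator on the right equals $\mathcal{A}_\veps^+$ or $\mathcal{A}_\veps^-$ depending on $\sgn f(x)$.

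\textbf{Strict propagation inequality.} Applying $\inf_c A(c) - \inf_c B(c) \leq \sup_c(A-B)$ and $\sup_c A(c) - \sup_c B(c) \leq \sup_c(A-B)$ to handle the outer optimisation over $c$, together with the pointwise bounds $\sup_B \underline{u} - \sup_B \overline{u} \leq \sup_B w$, $\inf_B \underline{u} - \inf_B \overline{u} \leq \sup_B w$, and $\fint_B(\underline{u} - \overline{u}) = \fint_B w$, both cases ($f(x) \geq 0$ and $f(x) < 0$) reduce to the same estimate
\begin{align*}
w(x) \leq (1-\lambda)\sup_{B_{R(\veps)}(x)} w + \lambda \sup_{c \in [m(\veps),M(\veps)]} \fint_{B_{\gamma\veps c^{-\alpha/2}}(x)} w,
\end{align*}
with $\lambda \coloneqq (1-\alpha)(1-\beta) \in (0,1)$ for $p \in (2,\infty)$ and $R(\veps)$ the largest admissible ball radius arising in $\mathcal{A}_\veps$. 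The positivity of $\lambda$, coming from the strict integral average inside $\mathcal{M}_r$, is what drives the propagation.

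\textbf{Propagation to the boundary.} Fix small $\eta > 0$ and $x_0 \in \Omega$ with $w(x_0) > M - \eta$. Since both the supremum and the average above are $\leq M$, the inequality forces the existence of an admissible $c_0$ with $\fint_{B_{\gamma\veps c_0^{-\alpha/2}}(x_0)} w > M - \eta/\lambda$. A Markov-type estimate then produces $x_1$ in that ball with $w(x_1) > M - \sqrt{\eta/\lambda}$. Iterating builds a chain $x_0, x_1, \ldots, x_N$ in $\Omega$ with $|x_{n+1}-x_n| \leq \gamma\veps m(\veps)^{-\alpha/2}$, whose length $N = O(\textup{diam}(\Omega)/(\gamma\veps M(\veps)^{-\alpha/2}))$ is finite for fixed $\veps$. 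Since $\Omega$ is bounded and satisfies the uniform exterior ball condition \eqref{as:regudom}, the ball around $x_N$ must intersect $\R^d \setminus \Omega$ in a set of relative measure at least $\mu > 0$, independent of $\eta$; on this intersection $w \leq 0$. This forces $\sup_c \fint_{B(x_N)} w \leq (1-\mu)M$ and hence $w(x_N) \leq M - \mu\lambda M$, a contradiction once $\eta$ is chosen small enough.

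\textbf{Main obstacle and uniqueness.} The main obstacle is the quantitative bookkeeping along the chain, since the almost-maximum deficit evolves as $\eta \mapsto \sqrt{\eta/\lambda}$ at each step, so $\eta$ must be chosen small with respect to the finite (but possibly large) length $N(\veps)$ and the parameters $\mu$, $\lambda$, all determined by $\veps$, $\Omega$, $p$, $d$. A secondary subtlety is that $\sgn f(x_n)$ may change along the chain, but the propagation inequality holds uniformly for both operator forms $\mathcal{A}_\veps^{\pm}$. Uniqueness of bounded Borel solutions to \eqref{eq:DPP} is then an immediate corollary: apply the comparison to two solutions $u_1, u_2$ with $f_1 = f_2 = f$, $g_1 = g_2 = g$, and swap their roles.
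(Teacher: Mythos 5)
Your approach diverges from the paper's: the paper shifts $\underline u$ by the constant $S=\sup(\underline u-\overline u)$ to create an \emph{exact} contact point $x_0$, then shows that $\underline u-S=\overline u$ a.e.\ on a ball $B_\eta(x_0)$ whose radius $\eta$ depends only on $\veps$ (the contact set is ``open'' in this sense), and iterates this until the a.e.\ contact set escapes $\Omega$, yielding the contradiction with $g_1\ge g_2$. You instead work with \emph{almost}-maxima and try to propagate them by a Markov/Chebyshev estimate along a chain.

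The first part of your argument is fine: the monotonicity reduction to $w(x)\leq(1-\lambda)\sup_{B_{R(\veps)}(x)}w+\lambda\sup_c\fint_{B_{\gamma\veps c^{-\alpha/2}}(x)}w$ with $\lambda=(1-\alpha)(1-\beta)>0$ matches what the paper extracts, and the Markov step correctly produces a point $x_1$ in the ball with $w(x_1)$ close to $M$. But the ``propagation to the boundary'' step has a genuine gap. The Markov estimate gives you \emph{some} point $x_1$ in the averaging ball with $w(x_1)>M-\sqrt{\eta/\lambda}$; it gives no control at all on the \emph{location} of $x_1$ in that ball. In particular the chain $x_0,x_1,\dots$ has no preferred direction, could retrace its steps, and may never approach $\partial\Omega$. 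Your claim that the chain has length $N=O(\mathrm{diam}(\Omega)/(\gamma\veps M(\veps)^{-\alpha/2}))$ implicitly assumes each step advances by at least the minimal ball radius, which is not what Markov provides; likewise the subsequent claim that ``the ball around $x_N$ must intersect $\R^d\setminus\Omega$ in a set of relative measure at least $\mu$'' requires $x_N$ to be within one ball radius of $\partial\Omega$, which is exactly what the chain argument fails to guarantee.

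To make a quantitative almost-maximum argument go through you would need to show that the \emph{set} of almost-maxima of $w$ (not just a single chain point) propagates by whole balls of radius comparable to $\gamma\veps M(\veps)^{-\alpha/2}$, and then use boundedness of $\Omega$ together with a covering argument to force this set to reach $\R^d\setminus\Omega$. That is precisely the structure of the paper's proof: at an exact contact point $x_0$, the inequality $0\le \lambda\,\frac{1}{|B_{\gamma\veps m(\veps)^{-\alpha/2}}|}\int_{B_{\gamma\veps M(\veps)^{-\alpha/2}}(x_0)}(\underline v-\overline u)$ together with $\underline v-\overline u\le 0$ forces $\underline v=\overline u$ a.e.\ on a full ball of radius depending only on $\veps$; iterating over a.e.\ contact points and using boundedness of $\Omega$ then reaches $\R^d\setminus\Omega$ in finitely many steps. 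I recommend you adopt the shift-to-exact-contact strategy (or, if you keep almost-maxima to avoid attainment issues for Borel functions, replace the single chain by propagation of a positive-measure set of near-maxima and supply the covering argument explicitly).
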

\begin{proof}
Since $\overline{u}\geq \underline{u}$ in $\R^d\setminus \Omega$, we only need to show that $\overline{u}\geq \underline{u}$ in $\Omega$. Assume by contradiction that $\overline{u}(x)< \underline{u}(x)$ for some $x\in \Omega$, and let $x_0\in \Omega$ and $S>0$ be such that
\[
S=\underline{u}(x_0)-\overline{u}(x_0)= \sup_{x\in \R^d}\{\underline{u}(x)-\overline{u}(x)\}.
\]
Define $\underline{v}=\underline{u}-S$, so that $\underline{v}(x_0)=\overline{u}(x_0)$ and $\underline{v}\leq \overline{u}$ in $\R^d$. Moreover,
\begin{align*}
\underline{v}(x)&=\underline{u}(x)-S \\ & \leq \mathcal{A}_\veps[\underline{u};f](x)-S-\veps^2J_p(f_2(x)) \\ & = \mathcal{A}_\veps[\underline{u}-S;f](x)-\veps^2J_p(f_2(x))\\
&= \mathcal{A}_\veps[\underline{v};f](x)-\veps^2J_p(f_2(x)),
\end{align*}
that is,
\begin{equation*}
\left\{
\begin{aligned}
    \underline{v}(x)&\leq \mathcal{A}_\veps[\underline{v};f](x)-\veps^2J_p(f_2(x)), &\textup{if}&\quad x\in  \Omega,\\
    \underline{v}(x)&\leq g_2(x)-S, &\textup{if}& \quad x\in \R^d\setminus\Omega.
\end{aligned}\right.
\end{equation*}
In particular, we have that
\begin{align*}
    0&=\underline{v}(x_0)-\overline{u}(x_0) \\ 
    &\leq \mathcal{A}_\veps[\underline{v};f](x_0)-\mathcal{A}_\veps[\overline{u};f](x_0) - \veps^2 (J_p(f_2(x_0))-J_p(f_1(x_0)))\\
    &\leq \mathcal{A}_\veps[\underline{v};f](x_0)-\mathcal{A}_\veps[\overline{u};f](x_0).
\end{align*}
If $f(x_0)\geq0$, we have, from the above identity
\begin{align*}
    0\leq &\mathcal{A}_\veps^+[\underline{v}](x_0)-\mathcal{A}_\veps^+[\overline{u}](x_0)\\
    = &
    \inf_{c\in [m(\veps),M(\veps)]}\left\{\alpha \sup_{B_{\veps^2c^{1-\alpha}}(x_0)} \underline{v} + (1-\alpha) \mathcal{M}_{\veps c^{-\frac{\alpha}{2}}}[\underline{v}](x_0) \right\}\\
    &-\inf_{c\in [m(\veps),M(\veps)]}\left\{\alpha \sup_{B_{\veps^2c^{1-\alpha}}(x_0)} \overline{u} + (1-\alpha) \mathcal{M}_{\veps c^{-\frac{\alpha}{2}}}[\overline{u}](x_0) \right\}\\
    \leq&\sup_{c\in [m(\veps),M(\veps)]}\left\{\alpha \left(\sup_{B_{\veps^2c^{1-\alpha}}(x_0)} \underline{v}-\sup_{B_{\veps^2c^{1-\alpha}}(x_0)} \overline{u}\right)+ (1-\alpha) \left(\mathcal{M}_{\veps c^{-\frac{\alpha}{2}}}[\underline{v}](x_0)-\mathcal{M}_{\veps c^{-\frac{\alpha}{2}}}[\overline{u}](x_0) \right)\right\}\\
    \leq& (1-\alpha)(1-\beta)\sup_{c\in [m(\veps),M(\veps)]}\left\{ \fint_{B_{\gamma \veps c^{-\frac{\alpha}{2}} }(x_0)}(\underline{v}(y)-\overline{u}(y))\dd y\right\}\\
    \leq & \frac{(1-\alpha)(1-\beta)}{|B_{\gamma \veps m(\veps)^{-\frac{\alpha}{2}}}(x_0)|}\int_{B_{\gamma \veps M(\veps)^{-\frac{\alpha}{2}}}(x_0)}(\underline{v}(y)-\overline{u}(y))\dd y.
\end{align*}
This implies that  $\underline{v}= \overline{u}$ on $B_\eta(x_0)$ for some $\eta>0$  that depends only on $\veps$. A similar argument follows if $f(x_0)< 0$. Repeating this process iteratively at new contact points, we get that $\underline{v}(x)= \overline{u}(x)$ for some $x\in \R^d\setminus \Omega$, and thus, we have that
\[
\overline{u}(x)=\underline{v}(x) \leq g(x)-S \leq \overline{u}(x)-S,
\]
which is a contradiction since $S>0$. 

Uniqueness of solutions of \eqref{eq:DPP} follows by taking $f_1=f_2=f$ and $g_1=g_2$.
\end{proof}

Before proceeding to prove existence of solutions, we need to ensure the existence of a subsolution and a supersolution.

\begin{lemma}\label{lem:barrier}
Let $d\in \N$ $p>2$, $\veps\in(0,1)$, $f\in C(\overline{\Omega})$ and $g\in C_{\textup{b}}(\R^d\setminus\Omega)$. Assume \eqref{as:trunc2}. Then there exist bounded Borel functions $\underline{u}$ and $\overline{u}$ such that
\begin{align}\label{eq:expsub}
     \left\{\begin{aligned}&  \underline{u}(x)  \leq \mathcal{A}_\veps[\underline{u};f](x)-\veps^2 J_p(f(x))&\textup{if}& \quad x\in \Omega,\\
    & \underline{u}(x) \leq g(x)&\textup{if}&\quad  x\in\mathbb{R}^d\setminus \Omega,
    \end{aligned}\right.
\end{align}
and 
\begin{align}\label{eq:expsuper}
     \left\{\begin{aligned}&  \overline{u}(x)  \geq \mathcal{A}_\veps[\overline{u};f](x)-\veps^2 J_p(f(x))&\textup{if}& \quad x\in \Omega,\\
    & \overline{u}(x) \leq g(x)&\textup{if}&\quad  x\in\mathbb{R}^d\setminus \Omega.
    \end{aligned}\right.
\end{align}
\end{lemma}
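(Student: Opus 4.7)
The plan is to construct both barriers explicitly as scaled quadratic functions truncated outside a neighborhood of $\overline{\Omega}$, and to verify the DPP inequalities via the quantitative asymptotic expansion of \Cref{pro:asexp-greater2-quantitative}.

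Fix once and for all a point $x_0 \in \R^d \setminus \overline{\Omega}$ and set $\delta := \mathrm{dist}(x_0, \overline{\Omega}) > 0$. The key test function is $\Phi(x) := |x - x_0|^2$, for which $|\nabla \Phi(x)| = 2|x - x_0| \geq 2\delta$ on $\overline{\Omega}$, and an explicit computation gives
\[
\Delta_p \Phi(x) = 2^{p-1}(p+d-2)\,|x - x_0|^{p-2} \geq c_\Phi > 0 \quad \text{on } \overline{\Omega}.
\]
Since $D^3 \Phi \equiv 0$ and $|D^2\Phi(x)|^2/|\nabla \Phi(x)|$ is bounded on $\overline{\Omega}$, $\Phi$ meets the nondegeneracy and regularity hypotheses of \Cref{pro:asexp-greater2-quantitative}.

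For the subsolution I would pick, given $\veps$, an open bounded neighborhood $\Omega'$ of $\overline{\Omega}$ large enough that every ball arising in $\mathcal{A}_\veps[\cdot;f](x)$ for $x \in \Omega$ is contained in $\Omega'$ (it suffices that $\mathrm{dist}(\overline{\Omega}, \R^d\setminus\Omega') \geq \gamma \veps\, m(\veps)^{-\alpha/2} + \veps^2 M(\veps)^{1-\alpha}$). Define
\[
\underline{u}(x) := \begin{cases} A\,\Phi(x) - K, & x \in \Omega',\\ \min\{A\,\Phi(x) - K,\; g(x)\}, & x \in \R^d \setminus \Omega', \end{cases}
\]
with $A, K > 0$ to be chosen. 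Taking $K$ large ensures $\underline{u} \leq g$ on $\R^d \setminus \Omega$. Since only the smooth piece $A\Phi - K$ enters the evaluation of $\mathcal{A}_\veps[\underline{u};f](x)$ for $x \in \Omega$, \Cref{pro:asexp-greater2-quantitative} applies and gives
\[
\mathcal{A}_\veps[\underline{u};f](x) - \underline{u}(x) \;\geq\; \veps^2\bigl(c_1 A - c_2(\veps)\, A\bigr),
\]
where $c_1 > 0$ depends only on $p, d, \delta$ and comes from the bound $J_p(\Delta_p(A\Phi)) \geq c_1 A$, while $c_2(\veps)$ collects the error terms from \eqref{eq:quanterror} and satisfies $c_2(\veps) \to 0$ as $\veps \to 0^+$. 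For $\veps$ with $c_2(\veps) < c_1/2$, choosing $A \geq 2 \|f\|_{L^\infty(\Omega)}^{1/(p-1)}/c_1$ then yields the DPP subsolution inequality on $\Omega$. The supersolution $\overline{u}$ is built symmetrically with $-A\,\Phi + K$, exploiting the identity $\mathcal{A}_\veps^-[-\varphi] = -\mathcal{A}_\veps^+[\varphi]$ already used in the proof of \Cref{pro:asexp-greater2} and truncating so as to respect the corresponding boundary condition.

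The main obstacle is making the construction work uniformly for all $\veps \in (0,1)$ rather than only for small $\veps$, since both the main term and the error in \Cref{pro:asexp-greater2-quantitative} scale linearly in $A$ for the quadratic $A\Phi$. For $\veps$ bounded away from $0$ one would either sharpen the barrier by replacing $\Phi$ with a steeper power $|x - x_0|^\beta$ with $\beta > 2$ (for which $J_p(\Delta_p\cdot)$ grows like $A|x-x_0|^{\beta - p/(p-1)}$ and outpaces the relevant derivative norms entering the error) or split the parameter range and handle the large-$\veps$ case by a rescaling argument that reduces it to the small-$\veps$ regime on a dilated domain.
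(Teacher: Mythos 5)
The core of the construction (a smooth strictly $p$-subharmonic function truncated below $-\|g\|_\infty$ outside a large ball, with $\overline{u}=-\underline{u}$ by the sign-flip identity $\mathcal{A}_\veps^-[-\varphi]=-\mathcal{A}_\veps^+[\varphi]$) is the right idea, and matches the paper's strategy. However, there is a genuine gap that you identify but do not close: your argument relies on Proposition~\ref{pro:asexp-greater2-quantitative}, whose error term only becomes small as $\veps\to 0^+$, whereas the lemma must hold for every fixed $\veps\in(0,1)$. For the quadratic $A\Phi$ with $\Phi(x)=|x-x_0|^2$, both the main term $J_p(\Delta_p(A\Phi))=A\,J_p(\Delta_p\Phi)$ and the error bound in \eqref{eq:quanterror} (which involves $\|D^2(A\Phi)\|$, $\|\nabla(A\Phi)\|$, $|D^2(A\Phi)(x)|^2/|\nabla(A\Phi)(x)|$) are linear in $A$, so the amplitude $A$ cancels from the relevant ratio and cannot rescue you when $\veps$ is not small. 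Your two sketched remedies are not executed and are doubtful: replacing $\Phi$ by $|x-x_0|^\beta$ with $\beta>2$ changes only the $|x-x_0|$-powers (which are anyway bounded above and below on $\overline\Omega$), leaving the same linear-in-$A$ scaling on both sides; and a dilation argument is delicate because the two length scales $\veps^2 c^{1-\alpha}$ and $\veps c^{-\alpha/2}$ in $\mathcal{A}_\veps$ do not scale homogeneously under $x\mapsto\lambda x$, and the truncation window $[m(\veps),M(\veps)]$ would have to be re-matched.

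The paper's proof avoids all of this by \emph{not} using the asymptotic expansion at all. It takes the exponential barrier $\underline{u}(x)=e^{Lx_1}-T$ on a large ball (with the constant $-\|g\|_\infty$ outside), and bounds the discrete quantities directly from the one-dimensional convexity of $t\mapsto e^{Lt}$:
\begin{align*}
\inf_{B_{\veps^2 c^{1-\alpha}}(x)}\underline{u}-\underline{u}(x)\;\geq\;-\veps^2 c^{1-\alpha}\,L\,e^{Lx_1},
\qquad
\mathcal{M}_{\veps c^{-\alpha/2}}[\underline{u}](x)-\underline{u}(x)\;\geq\;\veps^2 c^{-\alpha}\,\tilde K_d\,L^2\,e^{Lx_1},
\end{align*}
valid for every $\veps$, $c$, $L$. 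The crucial structural point is that the loss from the $\sup/\inf$ term is $O(L)$ while the gain from the averaging term is $\Omega(L^2)$; taking $L$ large then makes
\[
-\alpha\,M(\veps)^{1-\alpha}L+(1-\alpha)\,M(\veps)^{-\alpha}\tilde K_d\,L^2-\|f\|_{L^\infty(\Omega)}^{1/(p-1)}\geq 0
\]
for any fixed $\veps$. Your quadratic barrier cannot reproduce this, because $D^2(A\Phi)$ and $\nabla(A\Phi)$ both scale linearly in $A$. To repair the proof, replace $\Phi$ with $e^{Lx_1}$ (or any convex function for which the Hessian can be made dominant over the gradient by tuning a parameter), and argue directly from the monotonicity and convexity properties of the discrete operator rather than invoking the asymptotic expansion.
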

\begin{proof}
    First let us observe, that given any function $\underline{u}$, we have
    \[
    \mathcal{A}_\veps[\underline{u};f](x) \geq  \inf_{c\in [m(\veps),M(\veps)]}\left\{\alpha \inf_{B_{\veps^2c^{1-\alpha}}(x)} \underline{u} + (1-\alpha) \mathcal{M}_{\veps c^{-\frac{\alpha}{2}}}[\underline{u}](x) \right\}.
    \]
Since $\Omega$ is a bounded set, we can choose $R>0$ large enough such that $\Omega \subset B_R(0)$ and 
\[
(B_{\veps^2 M(\veps)^{1-\alpha}}(0)+ \Omega)\cup (B_{ \veps m(\veps)^{-\frac{\alpha}{2}}\sqrt{2(p+d)}}(0)+ \Omega) \subset B_R(0).
\]
Given two generic constants $L,T>0$, to be chosen later, and using the convention $x=(x_1,\ldots,x_d)$, let us define
\begin{align*}
    \underline{u}(x)=\left\{
    \begin{aligned}
    &e^{L x_1}-T  &\textup{if}& \quad x\in B_R(0),\\
    &-\|g\|_{L^\infty(\R^d\setminus\Omega)} \quad &\textup{if}& \quad  x\in \mathbb{R}^d\setminus B_R(0),
    \end{aligned}
    \right.
\end{align*}
which is clearly a bounded Borel function. Note that, in $B_R(0)$, we have that $\underline{u}$ and all its derivatives are strictly increasing functions in its first variable $x_1$. More precisely, for all $n\in \mathbb{N}$ and all $x\in B_R(0)$, we have
\[
\partial_{x_1}^n \underline{u}(x) = L^n e^{L x_1} >0.
\]
Moreover, for every $x\in \Omega$, we have
\[
\inf_{B_{\veps^2c^{1-\alpha}}(x)} \underline{u}-\underline{u}(x)= -\veps^2 c^{1-\alpha} \partial_{x_1} \underline{u}(x) + \frac{(\veps^2 c^{1-\alpha})^2 }{2} \partial_{x_1}^2 \underline{u}(\xi)\geq -\veps^2 c^{1-\alpha} L e^{Lx_1},
\]
where $\xi\in (x_1-\veps^2c^{1-\alpha}, x_1)$. In a similar way,
\begin{align*}
    \frac{1}{2} \inf_{B_{\gamma \veps c^{-\frac{\alpha}{2}}}(x)}\underline{u}+  \frac{1}{2} \sup_{B_{\gamma\veps c^{-\frac{\alpha}{2}}}(x)}\underline{u}-2\underline{u}(x) \geq \veps^2c^{-\alpha}\partial^2_{x_1}\underline{u}(x) = \veps^2c^{-\alpha} L^2 e^{L x_1},
\end{align*}
and
\[
\fint_{B_{\gamma \veps c^{-\frac{\alpha}{2}}}(x)} \underline{u}(y) \dd  y - \underline{u}(x) \geq  \veps^2c^{-\alpha} K_{d} L^2 e^{Lx_1},
\]
where $K_d$ is a positive constant depending only on the dimension. Thus, there exists a constant $\tilde{K}_d$, depending only on the dimension, such that 
\[
\mathcal{M}_{\veps c^{-\frac{\alpha}{2}}}[\underline{u}](x)-\underline{u}(x) \geq \veps^2c^{-\alpha} \tilde{K}_{d} L^2 e^{Lx_1}.
\]

We are now ready to show that $\underline{u}$ satisfies \eqref{eq:expsub} for a suitable choice of the constants $L$ and $T$. By direct computations,
\begin{align*}
    &\frac{ \mathcal{A}_\veps[\underline{u};f](x)-\underline{u}(x)}{\veps^2} -J_p(f(x)) \\
    &\geq \inf_{c\in [m(\veps),M(\veps)]}\left\{\alpha \frac{\displaystyle \inf_{B_{\veps^2c^{1-\alpha}}(x)} \underline{u}-\underline{u}(x)}{\veps^2} + (1-\alpha) \frac{\mathcal{M}_{\veps c^{-\frac{\alpha}{2}}}[\underline{u}](x)-\underline{u}(x)}{\veps^2} \right\}- \|f\|_{L^\infty(\Omega)}^{\frac{1}{p-1}}\\
    &\geq e^{Lx_1} \inf_{c\in [m(\veps),M(\veps)]}\left\{\alpha c^{1-\alpha} (-L) + (1-\alpha) c^{-\alpha}\left(\tilde{K}_d L^2\right) \right\}- \|f\|_{L^\infty(\Omega)}^{\frac{1}{p-1}}\\
    &\geq e^{Lx_1} \left( - \alpha M(\veps)^{1-\alpha} L + (1-\alpha) M(\veps)^{-\alpha}\tilde{K}_d L^2- \|f\|_{L^\infty(\Omega)}^{\frac{1}{p-1}}\right)\\
    &\geq0
\end{align*}
where the last estimate follows by taking $L$ big enough, since the positive term depends quadratically on $L$ while the negative ones depend sub-quadratically on $L$. Thus, for all $x\in \Omega$, we have that
\[
\underline{u}(x) \leq\mathcal{A}_\veps[\underline{u};f](x) - \veps^2 J_p(f(x)),
\]
and this holds independently on the choice of $T$. Finally, we can choose $T$ large enough such that
\[
\underline{u}(x)  \leq -\|g\|_{L^\infty(\R^d\setminus\Omega)} 
\]
for all $x\in B_R(0)$. This implies that $\underline{u}\leq g$ in $\R^d\setminus \Omega$, which concludes the proof of the fact that $\underline{u}$ satisfies \eqref{eq:expsub}.

To prove the existence of $\overline{u}$ satisfying \eqref{eq:expsuper} it is enough to take $\overline{u}=-\underline{u}$ and observe that
\[
 \mathcal{A}_\veps[\overline{u};f](x) \leq  \sup_{c\in [m(\veps),M(\veps)]}\left\{\alpha \sup_{B_{\veps^2c^{1-\alpha}}(x)} \overline{u} + (1-\alpha) \mathcal{M}_{\veps c^{-\frac{\alpha}{2}}}[\overline{u}](x) \right\}.
\]
\end{proof}
We are now ready to establish existence of solutions of the dynamic programming principle \eqref{eq:DPP}.
\begin{lemma}
Let $d\in \N$, $p>2$, $\veps\in(0,1)$, $f\in C(\overline{\Omega})$ and $g\in C_{\textup{b}}(\R^d\setminus\Omega)$. Assume \eqref{as:trunc2}. Then there exists a unique bounded Borel function $u_\veps$ satisfying \eqref{eq:DPP}. 
\end{lemma}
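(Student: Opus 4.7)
Uniqueness follows immediately from the comparison principle (Lemma \ref{lem:comparisonDPP}) taking $f_1 = f_2 = f$ and $g_1 = g_2 = g$, so the plan concentrates on existence. I would work with the monotone operator $T$ on bounded Borel functions defined by $T(v)(x) = \mathcal{A}_\veps[v;f](x) - \veps^2 J_p(f(x))$ for $x \in \Omega$ and $T(v)(x) = g(x)$ for $x \notin \Omega$. Since $\mathcal{A}_\veps^\pm$ are compositions of suprema, infima and integral averages over balls, $T$ preserves the pointwise partial order.

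Equipped with the sub- and supersolutions $\underline u,\overline u$ from Lemma \ref{lem:barrier}, I would set $u_0 = \underline u$ and $u_{n+1} = T(u_n)$. Monotonicity together with $u_0 \leq T(u_0)$ inductively gives a non-decreasing sequence bounded above by $T^n(\overline u) \leq \overline u$; its pointwise limit $u_\veps$ is therefore a bounded Borel function satisfying $u_\veps = g$ outside $\Omega$ (since $u_n = g$ there for $n \geq 1$). Monotonicity of $T$ also yields $u_{n+1} = T(u_n) \leq T(u_\veps)$, and hence in the limit $u_\veps \leq T(u_\veps)$, so $u_\veps$ is a subsolution of the DPP. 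The analogous iteration from above starting at $v_0 = \overline u$ produces a decreasing limit $v_\veps$ with $v_\veps \geq T(v_\veps)$, and Lemma \ref{lem:comparisonDPP} then forces $u_\veps \leq v_\veps$.

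The main obstacle is closing the gap $u_\veps \leq v_\veps$ to equality, which is equivalent to showing $u_\veps = T(u_\veps)$. A direct passage to the limit inside $T$ is hindered by two incompatibilities with monotone convergence: the $\inf_{B_r(x)}$ term appearing in $\mathcal{M}_r$ and the outer $\inf_{c \in [m(\veps), M(\veps)]}$ are only semicontinuous (while $\fint$ and $\sup$ behave well under increasing limits). To bypass this, I would recast $u_\veps$ in a Perron fashion as the pointwise supremum over all bounded Borel subsolutions of the DPP dominated by $\overline u$. The class of such subsolutions is stable under binary maxima (because $\max(v_1,v_2) \leq \max(T(v_1),T(v_2)) \leq T(\max(v_1,v_2))$ by monotonicity), which permits a countable exhaustion yielding Borel measurability of $u_\veps$. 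Finally, I would use a local perturbation argument at any $x_0$ where $u_\veps(x_0) < T(u_\veps)(x_0)$: the continuity of $f$ and the uniform smallness of the radii $\veps^2 c^{1-\alpha}$ and $\veps c^{-\alpha/2}$ for $c$ in the compact parameter set allow one to bump $u_\veps$ upward on a neighborhood of $x_0$ while preserving the subsolution property elsewhere, contradicting maximality. Combined with $u_\veps \leq T(u_\veps)$, this delivers the unique fixed point.
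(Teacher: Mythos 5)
Your proposal follows the paper's route up to the monotone iteration (uniqueness from Lemma~\ref{lem:comparisonDPP}, existence via $u_{k+1}=T(u_k)$ sandwiched between $\underline u$ and $\overline u$), and you correctly identify the crux: the pointwise increasing limit $u$ of the iterates satisfies $u\leq T(u)$ for free, but the reverse inequality does not follow by simply passing the monotone limit through $T$, because the $\inf_{B_r}$ term inside $\mathcal M_r$ and the outer $\inf_c$ are only upper semicontinuous under increasing pointwise convergence. Where you diverge from the paper is in how to close this gap, and that is where the proposal has a genuine hole.

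The Perron-type step—take the pointwise supremum $u^*$ over all bounded Borel subsolutions and then perturb at a contact point—does yield the reverse inequality once $u^*$ is known to be a legitimate bounded Borel function, and the perturbation idea is sound (a single-point modification $\tilde u=u^*$ off $x_0$, $\tilde u(x_0)=T(u^*)(x_0)$, already does the job without any neighborhood bump or continuity of $f$, since $T$ is monotone and the one-point raise cannot drop $\mathcal A_\veps$). But the claim that closure under binary maxima ``permits a countable exhaustion yielding Borel measurability'' is unsubstantiated: the pointwise supremum of an uncountable family of Borel functions is not Borel in general, and stability under finite maxima does not by itself produce a countable subfamily achieving the sup pointwise (the subsolutions here need not be continuous, so density/separability arguments do not apply). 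Moreover, if instead of the true sup you use the iterate limit $u$ (which \emph{is} Borel), the perturbation no longer yields a contradiction, because $u$ is not defined by a maximality property.

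The paper closes the gap differently: it upgrades pointwise to uniform convergence of the iterates, after which commuting the limit with $T$ is immediate. The key structural fact is that a fixed fraction $(1-\alpha)(1-\beta)>0$ of the weight in $\mathcal A_\veps$ lands on an integral average, which yields the estimate
\[
\sup_{x}|u_{m+1}(x)-u_{n+1}(x)|\;\leq\;\eta\,\sup_{x}|u_{m}(x)-u_{n}(x)|\;+\;\frac{C_{d,p}}{|B_{\veps M(\veps)^{-\alpha/2}}|}\,\|u_m-u_n\|_{L^1(\Omega)},
\qquad \eta=\alpha+(1-\alpha)\beta<1.
\]
Letting $m\to\infty$ and then $n\to\infty$, the $L^1$ term vanishes by dominated (or monotone) convergence, and $\eta<1$ forces $\sup_x|u-u_n|\to0$. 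This sidesteps Perron and all measurability issues entirely, and in addition delivers uniform convergence of the iterates—a stronger conclusion than the proposal aims for. If you want to salvage the Perron route, you would need to establish Borel measurability of $u^*$ directly; absent that, the iteration-plus-uniform-convergence argument of the paper is the cleaner path.
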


\begin{proof}
Uniqueness is already established. Let us prove existence. By Lemma \ref{lem:barrier}, we can find $u_0$ to be a bounded Borel function such that
\begin{align*}
     \left\{\begin{aligned}&  u_{0}(x)  \leq \mathcal{A}_\veps[u_{0};f](x)-\veps^2 J_p(f(x))&\textup{if}& \quad x\in \Omega,\\
    & u_{0}(x) \leq g(x)&\textup{if}&\quad  x\in\mathbb{R}^d\setminus \Omega,
    \end{aligned}\right.
\end{align*}
and define $\{u_k\}_{k\in \mathbb{N}}$ by recursion as
\begin{align*}
    u_{k}(x)= \left\{\begin{aligned}&\mathcal{A}_\veps[u_{k-1};f](x)-\veps^2 J_p(f(x))&\textup{if}& \quad x\in \Omega,\\
    &g(x)&\textup{if}&\quad  x\in\mathbb{R}^d\setminus \Omega.
    \end{aligned}\right.
\end{align*}

\noindent\textbf{Step 1:} Let us prove that, for every $x\in \R^d$, the sequence $\{u_k(x)\}_{k\in \mathbb{N}}$ is a nondecreasing sequence in $k$.
Clearly, $u_0\leq u_1$ in $\mathbb{R}^d$. Assume, using an inductive argument, that $u_{k-1}\leq u_k$ in $\mathbb{R}^d$. Then, on one hand, if $x\in \R^d\setminus \Omega$, we have
\[
u_{k+1}(x)=g(x)=u_k(x).
\]
On the other hand, if $x\in \Omega$, we can use the monotonicity of $\mathcal{A}_\veps$ together with the inductive hypothesis, to get
\[
u_{k+1}(x)= \mathcal{A}_\veps[u_{k};f](x)-\veps^2 J_p(f(x)) \geq \mathcal{A}_\veps[u_{k-1};f](x)-\veps^2 J_p(f(x))= u_k(x).
\]

\noindent\textbf{Step 2:} We prove now that for all $k\in \mathbb{N}$ we have that
\begin{align*}
     \left\{\begin{aligned}&  u_{k}(x)  \leq \mathcal{A}_\veps[u_{k};f](x)-\veps^2 J_p(f(x))&\textup{if}& \quad x\in \Omega,\\
    & u_{k}(x) =g(x)&\textup{if}&\quad  x\in\mathbb{R}^d\setminus \Omega.
    \end{aligned}\right.
\end{align*}
By definition we have that $u_k=g$ in $\mathbb{R}^d\setminus \Omega$. Moreover, by Step 1, for every $x\in \Omega$ we have that
\[
u_{k}(x) = \mathcal{A}_\veps[u_{k-1};f](x)-\veps^2 J_p(f(x)) \leq \mathcal{A}_\veps[u_{k};f](x)-\veps^2 J_p(f(x)).
\]

\noindent\textbf{Step 3:} By Lemma \ref{lem:barrier}, there exists $\overline{u}$ such that
\begin{equation*}
\left\{
\begin{aligned}
    \overline{u}(x)&\geq \mathcal{A}_\veps[\overline{u};f](x)-\veps^2J_p(f(x)) &\textup{if}&\quad x\in  \Omega,\\
    \overline{u}(x)&\geq g(x) &\textup{if}& \quad x\in \R^d\setminus\Omega.
\end{aligned}\right.
\end{equation*}
We then have, by the comparison principle stated in Lemma \ref{lem:comparisonDPP}, that $u_k\leq \overline{u}$ in $\mathbb{R}^d$. This implies, by monotonicity of the sequence $u_k$, that there exists $u$ bounded Borel such that
\[
u_k\to u \quad \textup{as} \quad k\to+\infty \quad \textup{pointwise in } \mathbb{R}^d.
\]

\noindent\textbf{Step 4:} We will show now that $u_k\to u$ as 
$k\to+\infty$  uniformly in $\mathbb{R}^d$. We follow ideas from \cite{Lew20}. Note that $u=u_k=g$ in $\mathbb{R}^d\setminus\Omega$ for all $k\in \mathbb{N}$, so we only need to ensure uniform convergence in $\Omega$. First, given $u_m,u_n$ with $m>n$, we have the following estimate for all $x\in \Omega$:
\begin{align*}
|u_{m+1}(x)-u_{n+1}(x)|=& |\mathcal{A}[u_m;f](x)-\mathcal{A}[u_n;f](x)| \\
\leq& \sup_{c\in [m(\veps),M(\veps)]} \Big\{\alpha \sup_{x\in\R^d} |u_m(x)-u_n(x)|+(1-\alpha)\beta \sup_{x\in\R^d} |u_m(x)-u_n(x)| \\
&+  (1-\alpha)(1-\beta) \frac{1}{|B_{\gamma \veps c^{-\frac{\alpha}{2}}}|}\|u_m-u_n\|_{L^1(\Omega)}\Big\}\\
\leq& \eta \sup_{x\in\R^d} |u_m(x)-u_n(x)| + \frac{C_{d,p}}{|B_{\veps M(\veps)^{-\frac{\alpha}{2}}}|}\|u_m-u_n\|_{L^1(\Omega)}\\
\leq& \eta \sup_{x\in\R^d} |u(x)-u_n(x)| + \frac{C_{d,p}}{|B_{\veps M(\veps)^{-\frac{\alpha}{2}}}|}\|u-u_n\|_{L^1(\Omega)},
\end{align*}
where $\eta=\alpha+(1-\alpha)\beta<1$ and we have used in the last step that $u_k$ is a nondecreasing sequence. Passing to the limit as $m\to+\infty$ in the above estimate, we get
\[
|u(x)-u_{n+1}(x)|\leq \eta \sup_{x\in\R^d} |u(x)-u_n(x)| + \frac{C_{d,p}}{|B_{\veps M(\veps)^{-\frac{\alpha}{2}}}|}\|u-u_n\|_{L^1(\Omega)}
\]
which implies
\[
\sup_{x\in\R^d} |u(x)-u_{n+1}(x)|\leq \eta \sup_{x\in\R^d} |u(x)-u_n(x)| + \frac{C_{d,p}}{|B_{\veps M(\veps)^{-\frac{\alpha}{2}}}|}\|u-u_n\|_{L^1(\Omega)}.
\]
Let $$A_{n}:=\sup_{x\in\R^d} |u(x)-u_{n+1}(x)|.$$ Passing to the limit as $n\to+\infty$ in the above estimate, and using the Dominated Convergence Theorem (or the Monotone Convergence Theorem),
\[
\lim_{n\to+\infty} A_{n}=\lim_{n\to+\infty} A_{n+1} \leq \eta\lim_{n\to+\infty} A_{n}, 
\]
which implies that $$\lim_{n\to+\infty} A_{n}=0,$$ and completes the proof.

\noindent\textbf{Step 5:} Finally, we show that $u$ is a solution of \eqref{eq:DPP}. Clearly, $u=g$ in $\mathbb{R}^d\setminus \Omega$. Moreover, it holds that
\begin{align*}
u(x)&= \lim_{k\to+\infty} u_k(x) \\ & = \lim_{k\to +\infty}\mathcal{A}_\veps[u_{k-1};f](x)-\veps^2 J_p(f(x))
\\ &= \mathcal{A}_\veps[\lim_{k\to +\infty} u_{k-1};f](x)-\veps^2 J_p(f(x))\\
&=\mathcal{A}_\veps[u;f](x)-\veps^2 J_p(f(x)).
\end{align*}
where we have used the uniform convergence of Step 4 to interchange the limit and the average $\mathcal{A}_\veps$.
The proof is completed. 
\end{proof}

Let us finally prove that the solutions of \eqref{eq:DPP} are uniformly bounded independently of $\veps>0$. We also find boundary estimates.

\begin{lemma}\label{lem:unifBarr+boundaryesti}
Let $d\in \N$ $p>2$, $\veps\in(0,1)$, $\Omega\subset\R^d$ be an open bounded set, $f\in C(\overline{\Omega})$ and $g\in C_{\textup{b}}(\R^d\setminus\Omega)$. Assume \eqref{as:trunc2}. Let $u_\veps$ be the solution to \eqref{eq:DPP}. Then, there exist $\veps_0>0$ and $T>0$ such that for all $\veps\in(0,\veps_0)$ we have that
\[
\|u_\veps\|_{L^\infty(\R^d)} \leq T,
\]
where $T$ is a positive constant depending on $p$, $\Omega$, $f$ and $g$ (but not on $\veps$). 

Moreover, for all $x_0\in \partial \Omega$, it holds that
\begin{align*}
\limsup_{\veps\to0, y\to x_0} u_\veps (y) \leq g(x_0), \quad \liminf_{\veps\to0, y\to x_0} u_\veps (y)  \geq g(x_0).
\end{align*}
\end{lemma}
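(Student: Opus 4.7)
The plan is to exhibit $\veps$-independent super- and subsolutions of the DPP and invoke the comparison principle (\Cref{lem:comparisonDPP}); the boundary behavior then follows from a local barrier argument based on the uniform exterior ball condition \eqref{as:regudom} inherited from \Cref{thm:DPPexsandconv}.

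\textbf{Uniform bound.} Fix $z_0\in \R^d\setminus\overline{\Omega}$ and set $\Phi(x)\coloneqq A-B|x-z_0|^2$. A direct computation yields $\Delta_p \Phi(x)=-(d+p-2)(2B)^{p-1}|x-z_0|^{p-2}$, so for $B$ large we have $\Delta_p\Phi\leq f-1$ and $|\nabla\Phi|\geq c_0>0$ uniformly on $\overline{\Omega}$, while for $A$ large $\Phi\geq \|g\|_\infty+1$ on a fixed bounded neighborhood $N$ of $\overline{\Omega}$. Define $\overline{u}\coloneqq \Phi$ on $N$ and $\overline{u}\coloneqq \|g\|_\infty+1$ elsewhere; then $\overline{u}$ is bounded Borel with $\overline{u}\geq g$ on $\R^d\setminus\Omega$. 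By \eqref{as:trunc2} the averaging radii $\veps^2M(\veps)^{1-\alpha}$ and $\veps m(\veps)^{-\alpha/2}$ tend to $0$, so for $\veps$ small the balls appearing in $\mathcal{A}_\veps[\overline{u};f](x)$ with $x\in \Omega$ lie inside $N$, reducing the DPP supersolution inequality for $\overline{u}$ to the same inequality for the smooth $\Phi$. We then split on the sign of $f(x)$: when $f(x)\geq 0$, evaluating the infimum defining $\mathcal{A}^+_\veps[\Phi]$ at $c=m(\veps)$ together with $\nplap\Phi(x)<0$ yields $\mathcal{A}^+_\veps[\Phi](x)\leq \Phi(x)-c_1\veps^{4/(2+\alpha)}$, which absorbs $\veps^2 J_p(f(x))\geq 0$ because $4/(2+\alpha)<2$; when $f(x)<0$, \Cref{pro:asexp-greater2-quantitative} gives $\mathcal{A}^-_\veps[\Phi](x)=\Phi(x)+\veps^2 J_p(\Delta_p\Phi(x))+\veps^2 E(\Phi,\veps)$ with $|E|\to 0$ uniformly on $\overline{\Omega}$, and the strict margin $\Delta_p\Phi\leq f-1$ combined with the uniform continuity of $J_p$ on the bounded range of $f$ produces a uniform $\veps^2\eta$ that absorbs the error. \Cref{lem:comparisonDPP} then gives $u_\veps\leq \overline{u}$, and a symmetric construction based on $B|x-z_0|^2-A$ produces $\underline{u}\leq u_\veps$, yielding $\|u_\veps\|_{L^\infty(\R^d)}\leq T$.

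\textbf{Boundary estimates.} Fix $x_0\in\partial\Omega$ and $\eta>0$, and use continuity of $g$ to pick $\delta>0$ with $|g(x)-g(x_0)|\leq \eta$ on $B_\delta(x_0)\cap(\R^d\setminus\Omega)$. By the exterior ball at $x_0$, choose $z_0$ with $\overline{B_R(z_0)}\cap\overline{\Omega}=\{x_0\}$ and consider the radial barrier $\Psi(x)\coloneqq R^{-\gamma}-|x-z_0|^{-\gamma}$ with $\gamma>\max\{0,(d-p)/(p-1)\}$. A radial computation gives $\Psi(x_0)=0$, $\Psi>0$ and $\nabla\Psi\neq 0$ on $\overline{\Omega}$, and $\Delta_p\Psi\leq -L$ uniformly on $\overline{\Omega}$; a small correction in $|x-x_0|^2$ may be added to enforce $\Psi>0$ on $(\R^d\setminus\Omega)\setminus\{x_0\}$. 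Choose $K$ so large that $\Delta_p(K\Psi)\leq f-1$ on $\overline{\Omega}$ and $K\Psi\geq 2\|g\|_\infty$ on $(\R^d\setminus\Omega)\setminus B_\delta(x_0)$, and set $\overline{w}_{\textup{loc}}\coloneqq g(x_0)+\eta+K\Psi$ on $\{|x-z_0|\geq R\}$, extended by a large constant inside $\{|x-z_0|<R\}$. The argument used in the uniform bound shows $\overline{w}_{\textup{loc}}$ satisfies the DPP supersolution inequality on $\Omega$ for $\veps$ small. Then $\overline{w}\coloneqq \min(\overline{u},\overline{w}_{\textup{loc}})$ is a DPP supersolution (minimum of supersolutions, by monotonicity of $\mathcal{A}_\veps$) with $\overline{w}\geq g$ on $\R^d\setminus\Omega$ and $\overline{w}(x_0)=g(x_0)+\eta$. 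Comparison yields $u_\veps\leq \overline{w}$, continuity of $\overline{w}$ at $x_0$ gives $\limsup_{\veps\to0,\,y\to x_0} u_\veps(y)\leq g(x_0)+\eta$, and arbitrariness of $\eta$ yields the upper estimate; the lower estimate follows from a symmetric DPP subsolution argument.

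The chief technical hurdles are (i) verifying the DPP supersolution inequality for $\Phi$ uniformly in $x\in\overline{\Omega}$ when the sign of $\Delta_p\Phi$ differs from that of $f$, addressed by the case split above which exploits the scaling $4/(2+\alpha)<2$ built into \eqref{as:trunc} on one side and the quantitative expansion \Cref{pro:asexp-greater2-quantitative} on the other, and (ii) designing a barrier $\Psi$ that simultaneously vanishes at $x_0$, is positive on $(\R^d\setminus\Omega)\setminus\{x_0\}$, and satisfies $\Delta_p\Psi\leq -L<0$ on $\overline{\Omega}$, for which the exterior ball assumption \eqref{as:regudom} is essential.
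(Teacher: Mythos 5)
Your proposal is correct and follows essentially the same strategy as the paper: construct smooth barriers with definite-sign $p$-Laplacian, verify the DPP super/subsolution inequality by a case split on whether $\sgn f(x)$ matches $\sgn(\Delta_p\,\text{barrier})$ (using the quantitative expansion \Cref{pro:asexp-greater2-quantitative} in the matched case and evaluating the inf/sup at $c=m(\veps)$ in the mismatched case), and then apply the comparison principle \Cref{lem:comparisonDPP}. The only differences are cosmetic: the paper builds a single radial barrier $\underline{W}(x)=K(|x-z_0|^{-\alpha}-R^{-\alpha})+g(x_0)-\eta$, centered at the exterior ball point of a boundary point $x_0$, and uses it both for the uniform $L^\infty$ bound and for the boundary estimates (taking $\eta\to0$ at the end), whereas you use a quadratic global barrier for the uniform bound and a separate local radial barrier $\Psi$ for the boundary estimate, gluing them with the $\min$ of two supersolutions. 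Both routes work; the paper's version is slightly more economical (one barrier does both jobs, and no correction term is needed because its barrier is evaluated from below), while your explicit mention of the quadratic correction to $\Psi$ — needed to push it strictly above $g$ on $\partial B_R(z_0)\setminus B_\delta(x_0)$ — addresses a genuine technical point that the paper passes over somewhat informally.
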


\begin{proof}
\textbf{Step 1: } Let us first construct a smooth barrier for the Poisson problem \eqref{eq:ppoisson}.
It is standard to check that the function $$W(x)= |x|^{-\alpha}$$ with $\alpha = \frac{p+d-2}{p-1}$ is radially decreasing and $\Delta_p W(x) = C_{d,p} |x|^{-(\alpha+1)(p-1)-1}$ 
for some positive constant $C_{d,p}$. Let us now take a point $x_0\in \partial\Omega$. By the uniform exterior ball condition, there exist $R>0$ and $z_0\in \R^d\setminus \Omega$ such that $\overline{B_R(z_0)}\cap \overline{\Omega}= \{x_0\}$. Now fix a parameter $\eta>0$ and a constant $K>0$ to be chosen later, and consider the following function
\begin{align*}
    \underline{W}(x)= K (|x-z_0|^{-\alpha}-R^{-\alpha}) + g(x_0)-\eta.
\end{align*}
First, let us note that, for all $x\in \Omega$, we have that
\[
\Delta_p \underline{W}(x) = K^{p-1} \Delta_p W(x-z_0) \geq K^{p-1} C_{d,p}\tilde{R}^{-(\alpha+1)(p-1)-1},
\]
where $\tilde{R}$ is such that $\Omega \subset B_{\tilde{R}}(z_0)$. Thus, we can choose $K$ large enough in such a way that $$\Delta_p \underline{W}(x) \geq \|f\|_{L^\infty(\Omega)}+1$$ for all $x\in \Omega$. On the other hand, if $x\in \overline{\Omega}\subset \R^d\setminus B_R(z_0) $, we have that 
\[
\underline{W}(x) \leq g(x_0)-\eta.
\]
Thus, there exists $r=r(g,\eta)>0$ such that 
\[
\underline{W}(x) \leq g(x)-\frac{\eta}{2} \quad \textup{for all} \quad x\in B_r(x_0)\cap \partial \Omega.
\]
Again, we can choose $K$ large enough such that
\[
\underline{W}(x) \leq g(x)-\frac{\eta}{2} \quad \textup{for all} \quad x\in \partial \Omega \setminus (B_r(x_0)\cap \partial \Omega),
\]
since $|x-x_0|^{-\alpha}-R^{-\alpha}<0$ for all $ x\in \partial \Omega \setminus (B_r(x_0)\cap \partial \Omega) \subset \R^d\setminus B_{R}(z_0)$. Summarizing, we have obtained a function $\underline{W}$ such that
    \begin{align*}
         \left\{
        \begin{aligned}
           \Delta_p \underline{W}(x)&\geq \|f\|_{L^\infty(\Omega)}+1 &\textup{if}& \quad x\in \Omega,\\
            \underline{W}(x)&\leq g(x)-\eta/2 &\textup{if}& \quad x\in \partial\Omega.
        \end{aligned}
        \right.
    \end{align*}
Moreover, $\underline{W}$ is smooth and has a nonvanishing gradient in a neighborhood of $\Omega$. 

\textbf{Step 2:} We will show now that $\underline{W}$ is a uniform-in-$\veps$ barrier for \eqref{eq:DPP}. On one hand, if $f(x)\geq0$, and since $\Delta_p \underline{W}\geq0$ in $\Omega$, we can use the quantitative estimate given in \eqref{pro:asexp-greater2-quantitative} to get
\begin{align*}
    \frac{\mathcal{A}_\veps[\underline{W};f](x) - \underline{W}(x)}{\veps^2} = \frac{\mathcal{A}_\veps^+[\underline{W}](x) - \underline{W}(x)}{\veps^2} = J_p(\Delta_p \underline{W}) +  o_\veps(1) \geq  J_p(\|f\|_{L^\infty(\Omega)}+1) + o_\veps(1) \geq J_p(f(x)),
\end{align*}
where we have used that $o_\veps(1)$ is uniform in $\Omega$. On the other hand, for all $f(x)<0$, we have that $\mathcal{A}_\veps[\underline{W};f](x)=\mathcal{A}_\veps^-[\underline{W}](x)$. In this case, we note that
   \begin{align*}
& \frac{\mathcal{A}_\veps^-[\underline{W}](x)- \underline{W}(x)}{\veps^2}= \!\!\! \sup_{c\in [m(\veps),M(\veps)]}\left\{\alpha \frac{1}{\veps^2}\left(\displaystyle \inf_{B_{\veps^2c^{1-\alpha}}(x)} \underline{W}-\underline{W}(x)\right) + (1-\alpha)\frac{1}{\veps^2}\left( \mathcal{M}_{\veps c^{-\frac{\alpha}{2}}}[\underline{W}](x)-\underline{W}(x) \right)\right\}\\
 &\qquad = \sup_{c\in [m(\veps),M(\veps)]}\left\{\alpha c^{1-\alpha} \left(-\left|\nabla \underline{W}(x)\right| + \frac{o(\veps^2c^{1-\alpha})}{\veps^2c^{1-\alpha}} \right) + (1-\alpha)c^{-\alpha} \left(\nplap \underline{W}(x) + \frac{o(\veps^2 c^{-\alpha})}{\veps^2c^{-\alpha}}\right)\right\}\\
 &\qquad \geq \sup_{c\in [m(\veps),M(\veps)]}\left\{\alpha c^{1-\alpha} \left(-2\left|\nabla \underline{W}(x)\right|\right) + (1-\alpha)c^{-\alpha} \left(\nplap \underline{W}(x)/2 \right)\right\}\\
 &\qquad \geq \alpha m(\veps)^{1-\alpha} \left(-2\left|\nabla \underline{W}(x)\right|\right) + (1-\alpha)m(\veps)^{-\alpha} \left(\nplap \underline{W}(x)/2 \right) \\
 &\qquad \geq 0 > J_p(f(x)).
 \end{align*}
 Thus, by the comparison principle given in  \Cref{lem:comparisonDPP}, we have that $u_\veps \geq \underline{W}$. 
We conclude that $u_\veps$ is uniformly bounded from below.  

 In a similar way, we can check that $u_\veps$ is uniformly bounded from above.

 \textbf{Step 3: } Given $x_0\in \partial \Omega$, we have
 \[
 \liminf_{\veps\to0, \, y\to x_0} u_\veps(y) \geq  \liminf_{\veps\to0, \, y\to x_0}  \underline{W}(y) =  W(x_0) = g(x_0)-\eta.
 \]
Using an analogous argument to the one used to find the lower bound, we can get that 
\[
 \limsup_{\veps\to0, \, y\to x_0} u_\veps(y) \leq    g(x_0)+\eta.
 \]
 The conclusion follows by the arbitrariness of $\eta$.
\end{proof}

\subsection{Convergence as $\veps\to0^+$}

Let us establish, for convenience, the notation 
\begin{equation*}
  \mathcal{S}(\veps, x, t,\varphi) = \left\{
\begin{aligned}
  &\frac{t- \mathcal{A}_\veps[\varphi;f](x)}{\veps^2}+J_p(f(x)) \quad &\textup{if}&\quad  x\in  \Omega,\\
    &t-g(x)\quad  &\textup{if}&\quad  x\in \R^d\setminus\Omega,
\end{aligned}\right.
\end{equation*}
so that problem \eqref{eq:DPP} can be equivalently formulated as
\[
\mathcal{S}(\veps,x, u_\veps(x),u_\veps)=0 \quad \textup{for} \quad x\in \R^d.
\]

First, we observe the following monotonicity and shifting invariance property on $S$, that follows directly from the definition.
\begin{lemma}
    \begin{enumerate}[\rm (a)]
        \item Let $t\in \R$ and $\phi,\psi$ be two bounded Borel functions such that $\phi\leq \psi$ in $\R^d$. Then
        \[
        \mathcal{S}(\veps,x, t,\phi)\geq S(\veps,x, t,\psi).
        \]
        \item Let $t,\xi,\eta\in \R$, and $\phi$ be a bounded Borel function in $\R^d$. Then, for all $x\in \Omega$, we have that
        \[
        \mathcal{S}(\veps,x, t+\xi,\phi+\xi+\eta)=\mathcal{S}(\veps,x, t,\phi)-\frac{\eta}{\veps^2}.
        \]
    \end{enumerate}
\end{lemma}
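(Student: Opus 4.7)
The plan is to verify both properties by reducing them to corresponding properties of the averaging operator $\mathcal{A}_\veps[\cdot\,;f]$, which in turn inherit them from their elementary constituents (pointwise suprema, infima, and averages inside balls, plus an outer inf or sup over the scale parameter $c$). I would treat (a) and (b) separately but essentially identically for $\mathcal{A}_\veps^+$ and $\mathcal{A}_\veps^-$, so that the sign of $f(x)$ plays no role in the argument.

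For part (a), the key observation is that the building blocks $\sup_{B_\rho(x)}\varphi$, $\inf_{B_\rho(x)}\varphi$, and $\fint_{B_\rho(x)}\varphi\,\dd y$, and therefore $\mathcal{M}_r[\varphi](x)$, are all monotone increasing in $\varphi$. Hence, for each fixed $c$, the bracketed expression in the definitions \eqref{def:A+} and \eqref{def:A-} is monotone increasing in its functional argument. Taking a subsequent infimum (for $\mathcal{A}_\veps^+$) or supremum (for $\mathcal{A}_\veps^-$) over $c \in [m(\veps), M(\veps)]$ preserves this monotonicity, so $\phi \leq \psi$ yields $\mathcal{A}_\veps[\phi;f](x) \leq \mathcal{A}_\veps[\psi;f](x)$. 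For $x \in \Omega$, the minus sign in the definition of $\mathcal{S}$ reverses the inequality, producing $\mathcal{S}(\veps,x,t,\phi) \geq \mathcal{S}(\veps,x,t,\psi)$; for $x \in \R^d\setminus \Omega$, $\mathcal{S}$ does not depend on its last argument at all, so the claim is trivially an equality.

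For part (b), the relevant property is the affine invariance of the same building blocks: for any constant $k$, adding $k$ to $\varphi$ adds $k$ to each of $\sup_{B_\rho(x)}\varphi$, $\inf_{B_\rho(x)}\varphi$, and $\fint_{B_\rho(x)}\varphi\,\dd y$. Since $\beta + (1-\beta) = 1$, this gives $\mathcal{M}_r[\varphi+k](x) = \mathcal{M}_r[\varphi](x) + k$. Likewise, since $\alpha + (1-\alpha) = 1$, the bracketed expressions in \eqref{def:A+}--\eqref{def:A-} each shift by exactly $k$ \emph{independently of} $c$, so the outer inf/sup over $c$ produces $\mathcal{A}_\veps^\pm[\varphi + k](x) = \mathcal{A}_\veps^\pm[\varphi](x) + k$ and hence $\mathcal{A}_\veps[\varphi + k; f](x) = \mathcal{A}_\veps[\varphi; f](x) + k$. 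Applying this with $k = \xi + \eta$ and substituting into the interior branch of $\mathcal{S}$, the $\xi$ in the numerator cancels and the remaining $-\eta/\veps^2$ yields the claimed identity.

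I do not expect any real obstacle: both assertions reduce to the elementary facts that $\sup$, $\inf$, and $\fint$ are monotone in their argument and commute with additive constants, combined with the convex-combination identities $\alpha + (1-\alpha) = 1$ and $\beta + (1-\beta) = 1$. The only minor point to flag is that $\mathcal{A}_\veps[\cdot\,;f]$ is defined piecewise according to $\mathrm{sign}(f(x))$, but since both pieces enjoy both properties, no case analysis is needed beyond observing this.
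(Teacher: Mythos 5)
Your proof is correct and fills in exactly the elementary verification that the paper omits — the paper simply asserts that both properties "follow directly from the definition" without writing them out. Your reduction to monotonicity and additive invariance of $\sup$, $\inf$, and $\fint$, together with the convex-combination weights $\alpha + (1-\alpha) = 1$ and $\beta + (1-\beta) = 1$, is the natural (and essentially only) argument, and you correctly handle the boundary branch of $\mathcal{S}$ in part (a) and the cancellation of $\xi$ in part (b).
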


\begin{proof}[Proof of convergence of solutions to the DPP as $\varepsilon \to 0^+$]
    Let us define 
\begin{align*}
    \overline{u}(x)= \limsup_{\veps\to0, y\to x} u_\veps(y), \quad \textup{and} \quad \underline{u}(x)= \liminf_{\veps\to0, y\to x} u_\veps(y).
\end{align*}
Note that, by definition, it holds that
$$\underline{u} (x)\leq \overline{u}(x).$$ 
Now, if we can show that $\overline{u}$ (resp. $\underline{u}$) is a viscosity subsolution (resp. supersolution)  of the Poisson problem \eqref{eq:ppoisson}, then, by the comparison principle for \eqref{eq:ppoisson}, we get that $\underline{u}\geq \overline{u}$. And thus, we conclude 
$$u_\veps (x) \to u(x)=\underline{u}(x) =\overline{u}(x) $$ as $\veps\to0$ uniformly in $\overline{\Omega}$.

\noindent \textbf{Step 1:} Let us first show that $\overline{u}$ is a viscosity subsolution at points where $f(x_0)\geq0$. We note first that $\overline{u}$ is a bounded upper semicontinuous function. Take $x_0\in \Omega$ and a function $\varphi\in C^\infty_{\textup{b}}(B_R(x_0))$ such that $\varphi(x_0)=\overline{u}(x_0)$ and $\varphi(x)>u(x)$ for all $x\in \Omega$. Our goal is to check that 
\begin{align}\label{eq:subsolproof}
    \Delta_p\varphi(x_0)\geq f(x_0).
\end{align}
It is standard to check that there exists a sequence $(\veps_n,y_n) \to (0,x_0)$ as $n\to+\infty$ in such a way that
\[
u_{\veps_n}(x) - \varphi(x) \leq u_{\veps_n}(y_n) - \varphi(y_n) + e^{-\frac{1}{\veps_n}} \quad \textup{for all} \quad x\in B_{R}(x_0).
\]
Now let $\xi_n\coloneqq u_{\veps_n}(y_{\veps_n})- \varphi(y_n)$. By the properties of the DPP, we get that
\begin{align*}
  0&=\mathcal{S}(\veps_n,y_n, u_\veps(y_n),u_\veps)\\ & =\mathcal{S}(\veps_n,y_n, \varphi(y_n) + \xi_n ,u_\veps) 
  \\ & \geq \mathcal{S}(\veps_n,y_n, \varphi(y_n) + \xi_n ,\varphi + \xi_n + e^{-\frac{1}{\veps_n}})\\
  & \geq  \mathcal{S}(\veps_n,y_n, \varphi(y_n) ,\varphi)- \frac{e^{-\frac{1}{\veps_n}}}{\veps_n^2},
\end{align*}
that is,
\begin{equation}\label{eq:keysupervisc}
    \mathcal{S}(\veps_n,y_n, \varphi(y_n) ,\varphi) \leq \frac{e^{-\frac{1}{\veps_n}}}{\veps_n^2}.
\end{equation}

\textbf{Case 1: } Assume $\Delta_p\varphi(x_0)>0$. We note first that, if $f(x_0)=0$, then $\Delta_p\varphi(x_0)\geq f(x_0)$ trivially. On the other hand, if $f(x_0)>0$, then, there exist $\rho>0$ and $N>0$ such that $f(y_n)>0$,  $\Delta_p\varphi(y_n), |\nabla\varphi(y_n)| >\rho>0$ and $y_n\in \Omega$ for all $n>N$. Then,
\begin{align*}
     \mathcal{S}(\veps_n,y_n, \varphi(y_n) ,\varphi) &= \frac{\varphi(y_n)- \mathcal{A}_{\veps_n}[\varphi;f](y_n)}{\veps_n^2}+J_p(f(y_n)) \\
     &=  \frac{\varphi(y_n)- \mathcal{A}_{\veps_n}^+[\varphi](y_n)}{\veps_n^2}+J_p(f(y_n))\\
     &= -J_p(\Delta_p \varphi(y_n)) + o_{\veps_n}(1) + J_p(f(y_n)),
\end{align*}
where $o_{\veps_n}(1)$ is uniform in $y_n$ by the quantitative estimate given in \Cref{pro:asexp-greater2-quantitative}. Combining the above identity with \eqref{eq:keysubvisc}, we get
\[
-J_p(\Delta_p \varphi(y_n)) + o_{\veps_n}(1) + J_p(f(y_n)) \leq  \frac{e^{-\frac{1}{\veps_n}}}{\veps_n^2}.
\]
Sending $n \to +\infty$ we get that $\Delta_p\varphi(x_0)\geq f(x_0)$, that is what we wanted to prove.

\textbf{Case 2:} Assume now, by contradiction, that $\Delta_p\varphi(x_0)<0$. Then, there exist $\rho>0$ and $N>0$ such that $ |\nabla\varphi(y_n)| >\rho>0$, $\nplap\varphi(y_n)<-\rho$ and $y_n\in \Omega$ for all $n>N$. Assume that there exists a subsequence $y_{n_j}\to x_0$ as $j+\infty$ such that $f(y_{n_j})\geq0$. Then,
\begin{align*}
   -\mathcal{S}(\veps_{n_j},y_{n_j}, \varphi(y_{n_j}) ,\varphi)&= \frac{\mathcal{A}_{\veps_{n_j}}[\varphi;f](y_{n_j})-\varphi(y_{n_j})}{\veps_{n_j}^2}-J_p(f(y_{n_j}))  \\ & =  \frac{\mathcal{A}_{\veps_{n_j}}^+[\varphi](y_{n_j})-\varphi(y_{n_j})}{\veps_{n_j}^2}-J_p(f(y_{n_j}))\\
   &\leq \inf_{c\in[m({\veps_{n_j}}),M({\veps_{n_j}})]}\Big\{
   \alpha c^{1-\alpha}(|\nabla \varphi (y_{n_j})|+\delta)+ (1-\alpha) c^{-\alpha} (\nplap\varphi(y_{n_j}) + \delta)\Big\}\\
   &\leq \inf_{c\in[m(\veps_{n_j}),M(\veps_{n_j})]}\Big\{
   \alpha c^{1-\alpha}(2|\nabla \varphi(y_{n_j})|)+ (1-\alpha) c^{-\alpha} (-\rho/2)\Big\}\\
   & \leq \alpha m(\veps_{n_j})^{1-\alpha}(2|\nabla \varphi(y_{n_j})|)+ (1-\alpha) m(\veps_{n_j})^{-\alpha} (-\rho/2)\\
   &<-1,
\end{align*}
where the last inequality hold for $\veps_{n_j}$ small enough. This is clearly a contradiction with \eqref{eq:keysupervisc}. Note that we have strongly used the quantitative estimates of \Cref{pro:asexp-greater2-quantitative} that are valid away from points where the gradient vanishes. Thus, we must necessary have that, for $\tilde{N}>N$ big enough, the sequence $y_n$ is such that $f(y_n)<0$. In this case, we can use \Cref{pro:asexp-greater2-quantitative} to get
\begin{align*}
     \mathcal{S}(\veps_n,y_n, \varphi(y_n) ,\varphi) &= \frac{\varphi(y_n)- \mathcal{A}_{\veps_n}[\varphi;f](y_n)}{\veps_n^2}+J_p(f(y_n)) \\
     &=  \frac{\varphi(y_n)- \mathcal{A}_{\veps_n}^-[\varphi](y_n)}{\veps_n^2}+J_p(f(y_n))\\
     &= -J_p(\Delta_p \varphi(y_n)) + o_{\veps_n}(1) + J_p(f(y_n)).
\end{align*}
This estimate, together with \eqref{eq:keysupervisc}, yields
\[
\frac{e^{-\frac{1}{\veps_n}}}{\veps_n^2}\geq -J_p(\Delta_p \varphi(y_n)) + o_{\veps_n}(1) + J_p(f(y_n)),
\]
that is, $\Delta_p\varphi(x_0)\geq f(x_0)\geq0$, which is a contradiction with the assumption $\Delta_p\varphi(x_0)<0$.

\textbf{Case 3:} Assume by contradiction that $\plap \varphi(x_0)=0$ and $\nabla \varphi(x_0)=0$. By definition of viscosity solution of the $p$-Laplace Poisson problem (see Appendix \ref{app:viscositydef}), we only need to test with this kind of test function at points where $f(x)>0$. Then, there exist $\rho>0$ and $N>0$ such that $ f(y_n) >\rho>0$ for all $n>N$. Thus, 
\begin{align*}
   -\mathcal{S}(\veps_n,y_n, \varphi(y_n) ,\varphi)&\leq \inf_{c\in[m({\veps_n}),M({\veps_n})]}\Big\{
   \alpha c^{1-\alpha}(|\nabla \varphi (y_n)|+\delta)+ (1-\alpha) c^{-\alpha} K\Big\}- J_p(f(y_n))\\
   &\leq (|\nabla \varphi (y_n)|+\delta)^{\alpha}K^{1-\alpha} - J_p(\rho).
\end{align*}
From here, we can reach again a contradiction with \eqref{eq:keysupervisc} by letting $n\to+\infty$. 

\textbf{Case 4:} Assume that $\plap \varphi(x_0)=0$ and $\nabla \varphi(x_0)\not=0$. On one hand, if $f(x_0)=0$, we trivially have $\Delta_p\varphi(x_0)=0 =f(x_0)$. On the other hand, assume by contradiction that $f(x_0)>0$. We claim that there exists $N>0$ big enough such that $\plap \varphi(y_n)\geq0$. If the claim holds, then we conclude, by \Cref{pro:asexp-greater2-quantitative}, that 
\begin{align*}
     \mathcal{S}(\veps_n,y_n, \varphi(y_n) ,\varphi) &= -J_p(\Delta_p \varphi(y_n)) + o_{\veps_n}(1) + J_p(f(y_n)).
\end{align*}
From here, the conclusion $\Delta_p\varphi(x_0)\geq f(x_0)$ follows using \eqref{eq:keysupervisc}. Let us proof the claim by contradiction. Assume that there exists a subsequence with $y_{n_j}\to x_0$ as $j+\infty$ such that $\Delta_p\varphi(y_{n_j})<0$ (and, thus, $\nplap \varphi(y_{n_j})<0$). Note that there exist $\rho>0$ and $N>0$ such that $f(y_{n_j})>\rho>0$ for all $n_j>N$. Then, we have
\begin{align*}
   -\mathcal{S}(\veps_{n_j},y_{n_j}, \varphi(y_{n_j}) ,\varphi)&= \frac{\mathcal{A}_{\veps_{n_j}}[\varphi;f](y_{n_j})-\varphi(y_{n_j})}{\veps_{n_j}^2}-J_p(f(y_{n_j})) \\ & =  \frac{\mathcal{A}_{\veps_{n_j}}^+[\varphi](y_{n_j})-\varphi(y_{n_j})}{\veps_{n_j}^2}-J_p(f(y_{n_j}))\\
   &\leq \inf_{c\in[m({\veps_{n_j}}),M({\veps_{n_j}})]}\Big\{
   \alpha c^{1-\alpha}(|\nabla \varphi (y_{n_j})|+\delta)+ (1-\alpha) c^{-\alpha} (\nplap\varphi(y_{n_j}) + \delta)\Big\}-J_p(\rho)\\
   &\leq \inf_{c\in[m(\veps_{n_j}),M(\veps_{n_j})]}\Big\{
   \alpha c^{1-\alpha}(2|\nabla \varphi(y_{n_j})|)+ (1-\alpha) c^{-\alpha} \delta\Big\}-J_p(\rho)\\
   & \leq (2|\nabla \varphi(y_{n_j})|)^{\alpha} \delta^{1-\alpha}+ o_{\veps_{n_j}}(1)-J_p(\rho)\\
   &<-J_p(\rho)/2.
\end{align*}
This is clearly a contradiction with \eqref{eq:keysupervisc}.

\textbf{Final comment on Step 1:} When $x_0\in \partial \Omega$ we can use \Cref{lem:unifBarr+boundaryesti} to get that $$\overline{u}(x_0)\leq g(x_0).$$ Thus, we have shown that $\overline{u}$ is a viscosity subsolution of \eqref{eq:ppoisson}.

\noindent\textbf{Step 2:} Let us show first that $\underline{u}$ is viscosity supersolution at points where $f(x_0)\geq0$. We note first that $\overline{u}$ is a bounded lower semicontinuous function. Take $x_0\in \Omega$ and a function $\varphi\in C^\infty_{\textup{b}}(B_R(x_0))$ such that $\varphi(x_0)=\underline{u}(x_0)$ and $\varphi(x)<\underline{u}(x)$ for all $x\in \Omega$. Arguing as in Step 1, we get that there exits a sequence $(\veps_n,y_n) \to (0,x_0)$ as $n\to+\infty$ in such a way that
\begin{equation}\label{eq:keysubvisc}
    \mathcal{S}(\veps_n,y_n, \varphi(y_n) ,\varphi) \geq - \frac{e^{-\frac{1}{\veps_n}}}{\veps_n^2}.
\end{equation}

\textbf{Case 1:} Assume $\Delta_p\varphi(x_0) >0$. Then, there exist $\rho>0$ and $N>0$ such that $\nplap\varphi(y_n), |\nabla\varphi(y_n)| >\rho>0$ and $y_n\in \Omega$ for all $n>N$. Assume that there exists a subsequence $y_{n_j}\to x_0$ as $j+\infty$ such that $f(y_{n_j})<0$. Then,
\begin{align*}
     -\mathcal{S}(\veps_{n_j},y_{n_j}, \varphi(y_{n_j}) ,\varphi) &= \frac{\mathcal{A}_{\veps_{n_j}}[\varphi;f](y_{n_j})- \varphi(y_{n_j})}{\veps_{n_j}^2}-J_p(f(y_{n_j})) \\ & =  \frac{ \mathcal{A}_{\veps_{n_j}}^-[\varphi](y_{n_j})-\varphi(y_{n_j})}{\veps_{n_j}^2}-J_p(f(y_{n_j}))\\
     & \geq \sup_{c\in[m({\veps_{n_j}}),M({\veps_{n_j}})]}\Big\{
   \alpha c^{1-\alpha}(-|\nabla \varphi (y_{n_j})|-\delta)+ (1-\alpha) c^{-\alpha} (\nplap\varphi(y_{n_j}) - \delta)\Big\}\\
   & \geq  \sup_{c\in[m({\veps_{n_j}}),M({\veps_{n_j}})]}\Big\{
   \alpha c^{1-\alpha}(-2|\nabla \varphi (y_{n_j})|)+ (1-\alpha) c^{-\alpha} (\nplap\varphi(y_{n_j})/2)\Big\}\\
   & \geq \alpha m({\veps_{n_j}})^{1-\alpha}(-2|\nabla \varphi (y_{n_j})|)+ (1-\alpha) m({\veps_{n_j}})^{-\alpha} (\nplap\varphi(y_{n_j})/2)\\
   &>1,
\end{align*}
where the last inequality holds for $\veps_{n_j}$ small enough since $\plap \varphi(y_n)>\rho$. This is clearly a contradiction with \eqref{eq:keysubvisc}. Thus, we must necessary have that, for $\tilde{N}>N$ big enough, the sequence $y_n$ is such that $f(y_n)\geq0$.
Then,
\begin{align*}
     \mathcal{S}(\veps_n,y_n, \varphi(y_n) ,\varphi) &= \frac{\varphi(y_n)- \mathcal{A}_{\veps_n}[\varphi;f](y_n)}{\veps_n^2}+J_p(f(y_n)) \\ & =  \frac{\varphi(y_n)- \mathcal{A}_{\veps_n}^+[\varphi](y_n)}{\veps_n^2}+J_p(f(y_n))\\
     &= -J_p(\Delta_p \varphi(y_n)) + o_{\veps_n}(1) + J_p(f(y_n)),
\end{align*}
where $o_{\veps_n}(1)$ is uniform in $y_n$ by the quantitative estimate of \Cref{pro:asexp-greater2-quantitative}. Combining the above identity with \eqref{eq:keysubvisc}, we get
\[
-J_p(\Delta_p \varphi(y_n)) + o_{\veps_n}(1) + J_p(f(y_n)) \geq - \frac{e^{-\frac{1}{\veps_n}}}{\veps_n^2}.
\]
Sending $n \to +\infty$ we get that $\Delta_p\varphi(x_0)\leq f(x_0)$, which is what we wanted to prove. 

\textbf{Case 2:} Assume $\Delta_p\varphi(x_0) \leq0$. In this case, the result is trivial since $\Delta_p\varphi(x_0) \leq0\leq f(x_0)$.

\textbf{Final comment on Step 2:} When $x_0\in \partial \Omega$, we can use \Cref{lem:unifBarr+boundaryesti} to get that $$\underline{u}(x_0)\geq g(x_0).$$ Thus, we conclude that $\underline{u}$ is a viscosity supersolution of \eqref{eq:ppoisson}.

\noindent \textbf{Step 3:} The cases for sub and supersolutions if  $f(x_0)<0$ follow from the above results changing $f$ by $-f$ and $u_\veps$ by $-u_\veps$.

This concludes the proof.
\end{proof}

\section{The associated game}\label{sec:probabilitythings}

As highlighted in \Cref{sec:intro,sect-main}, the Dynamic Programming Principle examined in the previous section relates to a game. In this section we prove, using probabilistic arguments, that the game has a value that coincides with the solution to the DPP. To simplify the presentation, we will only analyze the case $f\geq0$ in $\Omega$ (the necessary adaptations to cover the general case follow similarly). We recall the rules of the game here for convenience:    
\begin{enumerate}[\noindent \rm G(i)]
    \item\label{item1b} Fix a parameter $\veps>0$, an open bounded domain $\Omega$, a starting point $x_0\in \Omega$, a payoff function $g$ defined in $\R^d\setminus\Omega$, and a running payoff function $f$ defined in $\Omega$.
\item\label{item2b} Each turn, given the actual position $x\in \Omega$, the second player (who wants to minimize the final payoff)
 chooses a constant $c \in [m(\veps),M(\veps)]$. Then, the players toss a biased coin with probabilities $\alpha$ for heads and $1-\alpha$ for tails. 
   \begin{enumerate}[$\bullet$]
   \item If the result is heads, the first player (who wants to maximize) chooses the next position at any point in the ball $B_{\veps^2c^{1-\alpha}}(x)$.
   \item If the result is tails, they play a round of tug-of-war with noise in the ball $B_{\gamma \veps c^{-\frac{\alpha}{2}}}(x)$ with probabilities $\beta$ and $1-\beta$ (see description in \Cref{sec:intro}).
   \end{enumerate}
   \item\label{item4b} The process described in \rm{G}\eqref{item2b} is repeated, with a running payoff at each movement of amount $-\veps^2 J_p(f(x))$. The game continues until the position of the game lies outside $\Omega$ for the first time (this position is denoted by $x_\tau$). When this happens, the second player pays the amount ~$g(x_\tau)$ to the first player.
\end{enumerate}

To gain some intuition on why the solution of \eqref{DPP} coincides with the value of the game, let us heuristically make some observations. When tug-of-war with noise is played, the expected outcome is given by
 $$
         \mathcal{M}_{\varepsilon c^{-\frac{\alpha}{2}}}[ u_\veps](x) = \beta\bigg(\frac{1}{2} \sup_{ B_{\gamma \veps c^{-\frac{\alpha}{2}}}(x)}
  u_\veps + \frac{1}{2} \inf_{ B_{\gamma \veps c^{-\frac{\alpha}{2}}}(x)} u_\veps \bigg) + (1-\beta) \fint_{ B_{\gamma \veps c^{-\frac{\alpha}{2}}}(x)}  u_\veps (y) \, \mathrm{d}y,
$$
while when the first player chooses the next position
in $B_{\veps^2c^{1-\alpha}}(x)$ the expected value is given by $$
\sup_{B_{\veps^2c^{1-\alpha}}(x)} u_\veps.$$ 
Then, for a choice of $c\in [m(\veps),M(\veps)]$, we obtain the following expected outcome:
$$
\alpha \sup_{B_{\veps^2c^{1-\alpha}}(x)} u_\veps  + (1-\alpha) \mathcal{M}_{\veps c^{-\frac{\alpha}{2}}}[u_\veps ](x). 
$$
Taking into account that the first player wants to minimize and chooses $c\in [m(\veps),M(\veps)]$,
we finally arrive to the fact that the expected value at $x$ is given by the expected outcome after playing one round of the game, that is, 
$$
u_\veps (x) = \inf_{c\in [m(\veps),M(\veps)]}\left\{\alpha \sup_{B_{\veps^2c^{1-\alpha}}(x)} u_\veps  + (1-\alpha) \mathcal{M}_{\veps c^{-\frac{\alpha}{2}}}[u_\veps](x) 
\right\} - \veps^2 J_p(f(x)).
$$
Thus, we arrive to
the fact that the value function verifies $$u_\veps(x)= \mathcal{A}_\veps[u_\veps;f](x) - \veps^2 J_p(f(x))$$  
for $x\in \Omega$ and we have $u_\veps=g$ outside the domain, that is,
$u_\veps$ solves \eqref{DPP}.

\begin{remark} When $f(x)<0$, a similar reasoning leads to the part of the DPP where
$$
  u_\veps(x)=\sup_{c\in [m(\veps),M(\veps)]}\left\{\alpha \inf_{B_{\veps^2c^{1-\alpha}}(x)} u_\veps + (1-\alpha) \mathcal{M}_{\veps c^{-\frac{\alpha}{2}}}[u_\veps](x) \right\} - \veps^2 J_p(f(x)).
   $$
   \end{remark}
\subsection{Rigorous game-related results}
The game described in {\rm{G}\eqref{item1b}-\rm{G}\eqref{item2b}-\rm{G}\eqref{item4b}} generates a sequence of states
$$
P=\{ x_{0},x_{1},...,x_{\tau}\},
$$
where $x_0$ is the initial position of the game and $x_\tau$ is the final position of the game (the first time that the position of the game lies outside $\Omega$). A \emph{strategy} for Player~I is a function \( S_I \) defined on the partial histories of the game, which specifies the choices that Player~I will make. In particular, it determines the next position of the game when the biased coin toss results in a head, as well as the next position when Player~I is playing tug-of-war. Similarly, a strategy for Player~II is a function \( S_{II} \), also defined on the partial histories, that specifies the choice of \( c\in [m(\veps),M(\veps)] \) and the next position of the game when playing tug-of-war.

When the two players have fixed their strategies \( S_I \) and \( S_{II} \), we can compute the \emph{expected outcome} as follows: The expected payoff, when starting from \( x_0 \) and using the strategies \( S_I \) and \( S_{II} \), is given by
\begin{equation}
\label{eq:defi-expectation}
\mathbb{E}_{S_{I},S_{II}}^{x_0} \left[ -  \varepsilon^2 \sum_{i=0}^{\tau-1} J_p(f(x_i)) + g(x_\tau) \right].
\end{equation}
Here, the expected value is computed according to the probability measure obtained from Kolmogorov's extension theorem. Observe that the game ends almost surely, regardless of the strategies used by the players, that is, \( \mathbb{P} (\tau =+\infty) = 0 \), and therefore the expected value in \eqref{eq:defi-expectation} is well-defined. This is due to the random movements that occur when playing tug-of-war with noise, which forces the position of the game to be out of the domain in a finite number of plays with positive probability.

Then, since Player I tries to maximize the expected outcome and Player II aims to minimize it, the \emph{value of the game for Player I} is given by
\[
u^\varepsilon_I(x_0)=\sup_{S_I}\inf_{S_{II}} \,
\mathbb{E}_{S_{I},S_{II}}^{x_0}\left[- \varepsilon^2 \sum_{i=0}^{\tau-1} J_p(f(x_i))+ g (x_\tau) \right],
\]
while the \emph{value of the game for Player II} is given by 
\[
u^\varepsilon_{II}(x_0)= \inf_{S_{II}}\sup_{S_I} \,
\mathbb{E}_{S_{I},S_{II}}^{x_0}\left[ -  \varepsilon^2\sum_{i=0}^{\tau-1} J_p(f(x_i)) + g (x_\tau) \right]. 
\]
Intuitively, the values $u_I(x_0)$ and $u_{II}(x_0)$ are the best
expected outcomes each player can guarantee when the game starts at
$x_0$.

Notice that it holds that $$u^\varepsilon_I (x) \leq u^\varepsilon_{II}(x).$$ 
If these two values coincide, $u^\varepsilon_I (x) = u^\varepsilon_{II}(x)$, we say that \emph{the game has a value}. Now, we are ready to state and prove our main result in this section.

\begin{theorem} 
\label{teo.valor.DPP} Let the assymptions of \Cref{thm:DPPexsandconv} hold. Then, the game described in {\rm{G}\eqref{item1b}-\rm{G}\eqref{item2b}-\rm{G}\eqref{item4b}} has a value that is characterized by the unique solution  $u_\veps$ to \eqref{eq:DPP}, that is, it holds that 
$$u_\varepsilon(x)=u^\varepsilon_{I}(x)=u^\varepsilon_{II}(x) \quad \textup{for all} \quad  x \in \mathbb{R}^d.$$ 
\end{theorem}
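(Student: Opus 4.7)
The plan is to establish both inequalities $u^\varepsilon_I(x_0) \geq u_\varepsilon(x_0) \geq u^\varepsilon_{II}(x_0)$ via (approximate) martingale arguments applied to the discounted process
\begin{equation*}
N_k \coloneqq u_\varepsilon(x_{k\wedge\tau}) - \varepsilon^2 \sum_{i=0}^{(k\wedge\tau)-1} J_p(f(x_i)),
\end{equation*}
which combined with the trivial bound $u^\varepsilon_I\leq u^\varepsilon_{II}$ will give $u_\varepsilon(x_0)=u^\varepsilon_I(x_0)=u^\varepsilon_{II}(x_0)$. A preliminary ingredient is that $\tau<\infty$ almost surely, and in fact $\mathbb{E}[\tau]<\infty$, uniformly over strategies: at each turn, with probability at least $(1-\alpha)(1-\beta)>0$ the next position is drawn uniformly in a ball of radius at least $\gamma\varepsilon M(\varepsilon)^{-\alpha/2}$, and since $\Omega$ is bounded, a standard comparison against a paraboloid-type barrier yields a geometric tail $\mathbb{P}(\tau>k)\leq C_\varepsilon \rho_\varepsilon^k$ with $\rho_\varepsilon<1$.

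To prove $u_\varepsilon(x_0)\geq u^\varepsilon_{II}(x_0)$, fix $\eta>0$ and define a strategy $S_{II}^\eta$ for Player~II that at step $k$ first selects $c_k\in[m(\varepsilon),M(\varepsilon)]$ realizing $\mathcal{A}_\varepsilon^+[u_\varepsilon](x_k)$ up to an error $\eta 2^{-(k+2)}$, and, whenever she has to move during the tug-of-war phase, picks a point $y_k$ with $u_\varepsilon(y_k)\leq\inf_{B_{\gamma\varepsilon c_k^{-\alpha/2}}(x_k)}u_\varepsilon+\eta 2^{-(k+2)}$. For any opposing strategy $S_I$, computing the expectation over the two coin tosses and invoking the DPP \eqref{eq:DPP} yields
\begin{equation*}
\mathbb{E}_{S_I,S_{II}^\eta}^{x_0}\bigl[u_\varepsilon(x_{k+1})\bigm|\mathcal{F}_k\bigr]\leq u_\varepsilon(x_k)+\varepsilon^2 J_p(f(x_k))+C\eta 2^{-k}\quad\text{on }\{\tau>k\},
\end{equation*}
with $C=C(p,d)$, so that $N_k-C\eta(1-2^{-k})$ is a bounded supermartingale. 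Optional stopping, justified by the tail estimate on $\tau$ and the $L^\infty$ bound on $u_\varepsilon$ from \Cref{lem:unifBarr+boundaryesti}, combined with $N_\tau=g(x_\tau)-\varepsilon^2\sum_{i=0}^{\tau-1}J_p(f(x_i))$, gives $\mathbb{E}_{S_I,S_{II}^\eta}^{x_0}[N_\tau]\leq u_\varepsilon(x_0)+C\eta$; taking the supremum over $S_I$ and letting $\eta\to 0^+$ produces the desired inequality.

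For the reverse inequality $u^\varepsilon_I(x_0)\geq u_\varepsilon(x_0)$, Player~I uses a symmetric almost-optimal strategy $S_I^\eta$: whenever it is her turn to move she picks a point realizing the corresponding supremum of $u_\varepsilon$ up to $\eta 2^{-(k+2)}$. The key observation is that, for any choice of $c_k$ made by Player~II,
\begin{equation*}
\alpha\sup_{B_{\varepsilon^2 c_k^{1-\alpha}}(x_k)}u_\varepsilon+(1-\alpha)\mathcal{M}_{\varepsilon c_k^{-\alpha/2}}[u_\varepsilon](x_k)\geq \mathcal{A}_\varepsilon^+[u_\varepsilon](x_k)=u_\varepsilon(x_k)+\varepsilon^2 J_p(f(x_k)),
\end{equation*}
because $\mathcal{A}_\varepsilon^+$ is defined as an infimum over $c$, so Player~II has no way to do better than the DPP value. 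The analogous one-step computation shows that $N_k+C\eta(1-2^{-k})$ is a submartingale, and optional stopping yields $\mathbb{E}_{S_I^\eta,S_{II}}^{x_0}[N_\tau]\geq u_\varepsilon(x_0)-C\eta$ for every $S_{II}$, hence $u^\varepsilon_I(x_0)\geq u_\varepsilon(x_0)$ after taking $\inf_{S_{II}}$ and $\eta\to 0^+$.

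The main technical obstacle I expect is the optional stopping step, which requires uniform integrability of $\{N_{k\wedge\tau}\}_k$; this will be obtained by combining the exponential tail on $\tau$ with the uniform $L^\infty$ estimate on $u_\varepsilon$ from \Cref{lem:unifBarr+boundaryesti} and the boundedness of $f$. A secondary delicate point is that $u_\varepsilon$ is merely bounded Borel, so the infima and suprema appearing in $\mathcal{A}_\varepsilon^+$ need not be attained; the geometric-series margins $\eta 2^{-(k+2)}$ in the construction of both almost-optimal strategies are inserted precisely to bypass any measurable-selection difficulty and to keep the cumulative error telescopically summable.
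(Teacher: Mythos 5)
Your proposal follows essentially the same approach as the paper: both prove the two inequalities $u_\varepsilon\le u_I^\varepsilon$ and $u_\varepsilon\ge u_{II}^\varepsilon$ by constructing quasi-optimal strategies with geometrically decaying error margins, establishing the sub/supermartingale property of $u_\varepsilon(x_k)-\varepsilon^2\sum_{i<k}J_p(f(x_i))$ corrected by the cumulative error, and invoking the Optional Stopping Theorem before sending the error parameter to zero. Your discussion of the exponential tail of $\tau$ and the uniform-integrability justification of optional stopping is somewhat more explicit than the paper's (which only notes that random tug-of-war-with-noise moves force $\tau<\infty$ a.s.), but this is a matter of detail rather than a different route.
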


\begin{proof}
    \textbf{Step 1:} Let us prove first that $u_\veps \leq u_I^\veps$. Fix $\delta>0$. Given  any $c \in [m(\veps),M(\veps)]$ we choose a strategy $S_I^*$ for Player I following the solution to the DPP \eqref{DPP} as follows: 
Player I chooses a point $x_{k+1}^I$ such that
\begin{align*}
x_{k+1}^I=S_{I}^{\ast}(x_0,\dots,x_k) \quad \mbox{such that} \quad \sup_{y \in B_{\varepsilon^2 c^{1-\alpha}}(x_k)}u_{\varepsilon}(y)-\frac{\delta}{2^{k+1}}\leq u_{\varepsilon}(x_{k+1}^I),
\end{align*}
which corresponds to a quasi-optimal strategy aiming to maximize $u_\varepsilon$ if the result of the toss is heads. Player I also chooses another point such that 
\begin{align*}
\tilde{x}_{k+1}^I=S_{I}^{\ast}(x_0,\dots,x_k) \quad \mbox{such that} \quad \sup_{y \in B_{\gamma \varepsilon c^{-\frac{\alpha}{2}}}(x_k)}u_{\varepsilon}(y)-\frac{\delta}{2^{k+1}}\leq u_{\varepsilon}(\tilde{x}_{k+1}^I),
\end{align*}
which corresponds to a quasi-optimal strategy when the result of the toss is tails, and thus, they must play tug-of-war with noise.

Given the strategy $S^*_I$ for Player I and any strategy $S_{II}$ (i.e., $(c_{k+1},x^{II}_{k+1})=S_{II}(x_0,\dots,x_k)$) 
 for Player II, we consider the sequence of random variables
$$
M_k=u_{\varepsilon}(x_k) - \varepsilon^2 \sum_{i=0}^{k-1} J_p(f(x_i)) - \frac{\delta}{2^k}.$$ 
Let us see that $(M_k)_{k\geq 0}$ is a \textit{submartingale}. To this end, we need to estimate $$\mathbb{E}_{S_{I}^{\ast},S_{II}}^{x_0}[M_{k+1}|M_0,\dots ,M_k]=\mathbb{E}_{S_{I}^{\ast},S_{II}}^{x_0}[M_{k+1}|x_k].$$ 
It holds that
\begin{align*}
   \mathbb{E}_{S_{I}^{\ast},S_{II}}^{x_0}  [M_{k+1}|x_k]=&   \alpha u_{\varepsilon}({x}_{k+1}^{I})  + (1-\alpha)\left(  \beta \Big(\frac12 u_{\varepsilon}(\tilde{x}_{k+1}^{I})+\frac12 u_{\varepsilon}(x_{k+1}^{II}) \Big) + (1-\beta) \fint_{ B_{\gamma \veps c_{k+1}^{-\frac{\alpha}{2}}} (x_k)} \!\!\!\!\!\! u_{\varepsilon} (y) \, \mathrm{d}y \right)\\
   & -   \varepsilon^2 \sum_{i=0}^{k} J_p(f(x_i)) -\frac{\delta}{2^{k+1}}\\
   \geq & A_\veps^+[u_\veps](x_k)- \left(\alpha+ (1-\alpha)\frac{\beta}{2}\right) \frac{\delta}{2^{k+1}}- 
   \veps^2 \sum_{i=0}^{k} J_p(f(x_i)) -\frac{\delta}{2^{k+1}}\\
   \geq& u_\veps(x_k) + \veps^2 J_p(f(x_k))-\veps^2 \sum_{i=0}^{k} J_p(f(x_i)) -\frac{\delta}{2^{k}}\\
   =& M_k,
\end{align*}
where we have used that $u_\veps$ is a solution of \eqref{DPP} in the last equality. Therefore, $(M_k)_{k\geq 0}$ is \textit{submartingale}.    

Now, using the \textit{Optional Stopping Theorem} (cf. \cite{Williams}), we conclude that, for any $k \in \mathbb{N}$, we have that
\begin{align*}
\mathbb{E}_{S_{I}^{\ast},S_{II}}^{x_0}[M_{\min\{\tau, k\}}]\geq M_0,
\end{align*}
where $\tau$ is the first time such that $x_{\tau}\notin\Omega$. Taking limit $k\rightarrow\infty$, we arrive to
$$
\mathbb{E}_{S_{I}^{\ast},S_{II}}^{x_0}[M_\tau]
= \mathbb{E}_{S_{I}^{\ast},S_{II}}^{x_0} \Big[ 
g(x_\tau) - \varepsilon^2\sum_{i=0}^{\tau-1} J_p(f(x_i)) -\frac{\delta}{2^\tau}\Big] \geq 
u_\varepsilon (x_0) - \delta .
$$
Since the above estimate holds for any strategy $S_{II}$ used by Player II, it holds for the infimum of all possible strategies of Player II. Thus, we get
\begin{align*}
u^\varepsilon_I(x_0)&=\sup_{S_I}\inf_{S_{II}}\,
\mathbb{E}_{S_{I},S_{II}}^{x_0}\left[- \varepsilon^2 \sum_{i=0}^{\tau-1} J_p(f(x_i))
+ g (x_\tau) \right] 
\\
&\geq \inf_{S_{II}}\,
\mathbb{E}_{S_{I}^*,S_{II}}^{x_0}\left[- \varepsilon^2 \sum_{i=0}^{\tau-1} J_p(f(x_i))
+ g (x_\tau) \right]\\
&\geq u_\veps(x_0)-\delta.
\end{align*}
The arbitrariness of $\delta$ concludes the proof.

 \textbf{Step 2:} Let us prove now that $u_\veps \geq u_{II}^\veps$. Fix $\delta>0$. To this end, we define the following quasi-optimal strategy $S_{II}^*$ for Player II: First choose $c^*_{k+1}$ in such a way that
\begin{align*}
\displaystyle 
\mathcal{A}_\veps^+[u_\veps](x_k)  +\frac{\delta}{2^{k+2}} \geq \left\{\alpha \sup_{B_{\veps^2(c^*_{k+1})^{1-\alpha}}(x_k)} u_{\varepsilon}  + (1-\alpha) \mathcal{M}_{\veps (c^*_{k+1})^{-\frac{\alpha}{2}}}[u_{\varepsilon}](x_k)
\right\}.
\end{align*}
and then 
\begin{align*}
x_{k+1}^{II}=S_{II}^{\ast}(x_0,\dots,x_k) \qquad \mbox{such that} \qquad \inf_{y \in B_{\gamma \varepsilon (c^*_{k+1})^{-\frac{\alpha}{2}}}(x_k)}u^{\varepsilon}(y)+\frac{\delta}{2^{k+2}}\geq u^{\varepsilon}(x_{k+1}^{II}).
\end{align*}
Given any strategy $S_I$ for Player I, define the random variables $$
N_k=u_{\varepsilon}(x_k)- \varepsilon^2 \sum_{i=0}^{k-1} J_p(f(x_i)) + \frac{\delta}{2^k}$$ and then let us show that $(N_k)_{k\geq 0}$ is a \textit{supermartingale}. Indeed, we have
\begin{align*}
   \mathbb{E}_{S_{I}^{\ast},S_{II}}^{x_0}  [N_{k+1}|x_k]=&   \alpha u_{\varepsilon}({x}_{k+1}^{I})  + (1-\alpha)\left(  \beta \Big(\frac12 u_{\varepsilon}(\tilde{x}_{k+1}^{I})+\frac12 u_{\varepsilon}(x_{k+1}^{II}) \Big) + (1-\beta) \fint_{ B_{\gamma \veps (c^*_{k+1})^{-\frac{\alpha}{2}}}(x_k)}  u_{\varepsilon} (y) \, \mathrm{d}y \right)\\
   &  -   \varepsilon^2 \sum_{i=0}^{k} J_p(f(x_i)) +\frac{\delta}{2^{k+1}}\\
   \leq & A_\veps^+[u_\veps](x_k)+ \left(1+ (1-\alpha)\frac{\beta}{2}\right) \frac{\delta}{2^{k+2}} -
   \veps^2 \sum_{i=0}^{k} J_p(f(x_i)) +\frac{\delta}{2^{k+1}}\\
   \leq& u_\veps(x_k) + \veps^2 J_p(f(x_k)) - \veps^2 \sum_{i=0}^{k} J_p(f(x_i)) +\frac{\delta}{2^{k}}\\
   =& N_k.
\end{align*}
From here, the rest of the proof of Step 2 follows as in Step 1 to obtain
\begin{align*}
u^\varepsilon_{II}(x_0)&=\inf_{S_{II}} \sup_{S_I}\,
\mathbb{E}_{S_{I},S_{II}}^{x_0}\left[- \varepsilon^2 \sum_{i=0}^{\tau-1} J_p(f(x_i))
+ g (x_\tau) \right] \\
&\leq \sup_{S_I} \,
\mathbb{E}_{S_{I},S_{II}^*}^{x_0}\left[- \varepsilon^2 \sum_{i=0}^{\tau-1} J_p(f(x_i))
+ g (x_\tau) \right]\\
&\leq u_\veps(x_0)+ \delta.
\end{align*} 
Since $\delta$ is arbitrary, this concludes the proof the result.
\end{proof}

\appendix

\section{The geometric mean as infimum of arithmetic means} \label{ApA}
A key tool that allows us to derive asymptotic expansions for the $p$-Laplacian (and related operators) is the fact that any geometric mean can be equivalently expressed as the infimum of certain arithmetic means. More precisely, we rely on the identity given in the following result.
\begin{lemma}\label{lem:ident-crucial}
Let $a,b\geq0$ and $\alpha \in (0,1)$. Then,
\begin{equation}\label{eq:gm-am}
    a^{\alpha}b^{1-\alpha}=\inf_{c>0}\Big\{\alpha c^{1-\alpha} a + (1-\alpha)c^{-\alpha} b\Big\}.
\end{equation}
\end{lemma}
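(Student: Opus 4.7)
The plan is to recognize the identity as a variational reformulation of the classical weighted arithmetic-geometric mean inequality, applied to a cleverly chosen pair of quantities. The $c$-dependence in the two terms is designed to cancel out under the geometric mean, which is what makes the identity work.

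First, I would establish the lower bound. For any fixed $c>0$, I apply the weighted AM-GM inequality
\[
\alpha A + (1-\alpha) B \geq A^{\alpha} B^{1-\alpha}, \qquad A,B \geq 0, \quad \alpha \in (0,1),
\]
with the choice $A = c^{1-\alpha} a$ and $B = c^{-\alpha} b$. Since
\[
A^{\alpha} B^{1-\alpha} = c^{\alpha(1-\alpha)} a^{\alpha} \cdot c^{-\alpha(1-\alpha)} b^{1-\alpha} = a^{\alpha} b^{1-\alpha},
\]
the $c$-factors cancel and one obtains $\alpha c^{1-\alpha} a + (1-\alpha) c^{-\alpha} b \geq a^{\alpha} b^{1-\alpha}$ for every $c>0$. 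Taking the infimum over $c>0$ yields the inequality $\geq$ in \eqref{eq:gm-am}.

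For the matching upper bound, I would split into cases according to whether $a$ and $b$ vanish. If $a,b>0$, I would evaluate the expression at the candidate minimizer $c^{\ast} = b/a$, obtained by differentiating the right-hand side of \eqref{eq:gm-am} in $c$ and equating to zero. A direct computation gives
\[
\alpha (c^{\ast})^{1-\alpha} a + (1-\alpha) (c^{\ast})^{-\alpha} b = \alpha a^{\alpha} b^{1-\alpha} + (1-\alpha) a^{\alpha} b^{1-\alpha} = a^{\alpha} b^{1-\alpha},
\]
so the infimum is attained. If $a=0$, the expression reduces to $(1-\alpha) c^{-\alpha} b$, which tends to $0$ as $c \to +\infty$, matching $a^{\alpha}b^{1-\alpha}=0$; symmetrically, if $b=0$, letting $c \to 0^+$ drives the expression to zero.

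There is no real obstacle here; the only mild subtlety is that in the degenerate cases $a=0$ or $b=0$ the infimum is not attained at a finite positive $c$ but only approached in the limit, so it is worth treating these cases separately to keep the argument clean.
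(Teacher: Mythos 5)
Your proof is correct and follows essentially the same route as the paper's: weighted AM--GM applied to $A=c^{1-\alpha}a$, $B=c^{-\alpha}b$ gives the lower bound, and evaluating at $c=b/a$ gives the matching upper bound. If anything, you are slightly more careful than the paper in the degenerate case $b=0$, $a>0$: the paper still proposes the choice $c=b/a$, which would be $c=0\notin(0,\infty)$, whereas you correctly pass to the limit $c\to 0^+$ there.
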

\begin{proof}
    If $a=0$ the result follows since the infimum on the right-hand side of \eqref{eq:gm-am}  is reached as $c\to+\infty$. Now, let us assume that $a>0$. On one hand, by choosing $c=b/a$, we obtain
    \[
    \inf_{c>0}\Big\{\alpha c^{1-\alpha} a + (1-\alpha)c^{-\alpha} b\Big\}\leq \alpha \left(b/a\right)^{1-\alpha} a + (1-\alpha)\left(b/a\right)^{-\alpha} b = a^{\alpha}b^{1-\alpha}.
    \]
The reverse inequality follows from the weighted Arithmetic Mean - Geometric Mean inequality. Specifically,
\[
    \inf_{c>0}\Big\{\alpha c^{1-\alpha} a + (1-\alpha)c^{-\alpha} b\Big\}\geq \inf_{c>0}\Big\{\left(c^{1-\alpha} a\right)^{\alpha} \left(c^{-\alpha} b\right)^{1-\alpha}\Big\} = a^{\alpha}b^{1-\alpha}.
    \]
\end{proof}
Since the results in this paper require performing approximations on quantities within the infimum over 
$c>0$, we need to localize the region over which the infimum is taken and ensure that this truncation does not introduce significant error. More precisely, we have the following result.
\begin{lemma}\label{lem:gm-am-aprox}
Let $a,b\geq0$, $\alpha \in (0,1)$ and $0<m<M<+\infty$. Then,
\begin{equation}\label{eq:gm-am-aprox}
    \left|a^{\alpha}b^{1-\alpha}-\inf_{c\in[m,M]}\Big\{\alpha c^{1-\alpha} a + (1-\alpha)c^{-\alpha} b\Big\}\right| \leq \alpha a \, m^{1-\alpha} + (1-\alpha) b\,  M^{-\alpha}.
\end{equation}
\end{lemma}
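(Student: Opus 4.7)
The plan is to reduce to \Cref{lem:ident-crucial} and then measure how much restricting $c$ from $(0,\infty)$ to $[m,M]$ perturbs the infimum. Set $\Phi(c):=\alpha a c^{1-\alpha}+(1-\alpha) b c^{-\alpha}$. Since $[m,M]\subset (0,\infty)$, \Cref{lem:ident-crucial} gives $a^\alpha b^{1-\alpha}=\inf_{c>0}\Phi(c)\leq \inf_{c\in[m,M]}\Phi(c)$, so the absolute value in \eqref{eq:gm-am-aprox} can be dropped and it suffices to bound the nonnegative quantity $\inf_{[m,M]}\Phi-a^\alpha b^{1-\alpha}$ from above.

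I would first dispose of the degenerate cases. If $a=0$, then $\Phi(c)=(1-\alpha)b c^{-\alpha}$ is decreasing, so $\inf_{[m,M]}\Phi=\Phi(M)=(1-\alpha)bM^{-\alpha}$, while $a^\alpha b^{1-\alpha}=0$, and the bound is attained. The case $b=0$ is symmetric with the infimum at $c=m$. For $a,b>0$ the computation $\Phi'(c)=\alpha(1-\alpha)c^{-\alpha-1}(ac-b)$ identifies a unique critical point $c^{\ast}=b/a>0$; if $c^{\ast}\in[m,M]$ then $\inf_{[m,M]}\Phi=\Phi(c^{\ast})=a^\alpha b^{1-\alpha}$ and there is nothing to prove.

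The substantive cases are $c^{\ast}<m$ and $c^{\ast}>M$; I would treat the first, the second being symmetric under swapping $m\leftrightarrow M$ and the roles of the two terms. In this case $\Phi$ is increasing on $[m,M]$, so $\inf_{[m,M]}\Phi=\Phi(m)=\alpha a m^{1-\alpha}+(1-\alpha)bm^{-\alpha}$. The key elementary estimate is that $(1-\alpha)bm^{-\alpha}\leq a^\alpha b^{1-\alpha}$: indeed, $c^{\ast}<m$ reads $b<ma$, hence $(b/m)^{\alpha}\leq a^{\alpha}$, and therefore
\[
(1-\alpha)b\, m^{-\alpha}=(1-\alpha)b^{1-\alpha}(b/m)^{\alpha}\leq b^{1-\alpha}a^{\alpha}.
\]
Subtracting $a^{\alpha}b^{1-\alpha}$ from $\Phi(m)$ and using this estimate yields $\Phi(m)-a^{\alpha}b^{1-\alpha}\leq \alpha a m^{1-\alpha}$, which is in turn dominated by the right-hand side of \eqref{eq:gm-am-aprox} since $(1-\alpha)bM^{-\alpha}\geq 0$. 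The symmetric argument for $c^{\ast}>M$ yields $\Phi(M)-a^{\alpha}b^{1-\alpha}\leq (1-\alpha)bM^{-\alpha}$, completing the proof.

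There is no deep obstacle: the only point that requires a bit of care is the asymmetric inequality $(1-\alpha)bm^{-\alpha}\leq a^{\alpha}b^{1-\alpha}$ (and its mirror image), which relies precisely on the location of $c^{\ast}$ relative to $[m,M]$ and ensures that, in each non-trivial case, exactly one of the two endpoint terms of $\Phi$ is fully absorbed into the geometric mean, leaving only the single remaining term that matches one half of the right-hand side.
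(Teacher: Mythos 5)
Your proof is correct and follows essentially the same route as the paper's: invoke Lemma~\ref{lem:ident-crucial} to drop the absolute value, dispose of the degenerate cases $a=0$ or $b=0$, then locate the unrestricted minimizer $c^{\ast}=b/a$ and split into $c^{\ast}\in[m,M]$, $c^{\ast}<m$, $c^{\ast}>M$, bounding the value at the relevant endpoint. In fact your handling of the endpoint estimate is slightly cleaner: you isolate the inequality $(1-\alpha)b\,m^{-\alpha}\le a^{\alpha}b^{1-\alpha}$ directly from $b/a<m$, whereas the paper's display in this case writes $(1-\alpha)(b/a)^{-\alpha}b = a^{\alpha}b^{1-\alpha}$ where it should read $(1-\alpha)a^{\alpha}b^{1-\alpha}\le a^{\alpha}b^{1-\alpha}$ — a harmless slip, but your formulation avoids it.
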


\begin{proof}
First, we note that
\begin{align*}
a^{\alpha}b^{1-\alpha} = \inf_{c>0}\Big\{\alpha c^{1-\alpha} a + (1-\alpha)c^{-\alpha} b\Big\} \leq \inf_{c\in[m,M]}\Big\{\alpha c^{1-\alpha} a + (1-\alpha)c^{-\alpha} b\Big\},
\end{align*}
so we only need to prove the reverse inequality. We do this by considering different cases. If $a = b = 0$, then \eqref{eq:gm-am-aprox} holds since $$
\inf_{c\in[m,M]}\Big\{\alpha c^{1-\alpha} a + (1-\alpha)c^{-\alpha} b\Big\} = 0 = a^{\alpha}b^{1-\alpha}.$$
If $a > 0$ and $b = 0$, then
\begin{align*}
\inf_{c\in[m,M]}\Big\{\alpha c^{1-\alpha} a + (1-\alpha)c^{-\alpha} b\Big\} = \inf_{c\in[m,M]}\left\{\alpha c^{1-\alpha} a \right\} = \alpha a \inf_{c\in[m,M]}\left\{ c^{1-\alpha}\right\} = \alpha a m^{1-\alpha},
\end{align*}
and thus, \eqref{eq:gm-am-aprox} holds since $a^{\alpha}b^{1-\alpha} = 0$. The case $a = 0$ and $b > 0$ follows in a similar way. Finally, assume that $a, b > 0$. Let us define $F(c) = \alpha c^{1-\alpha} a + (1-\alpha)c^{-\alpha} b$ and note that $F'(c) = \alpha(1-\alpha) c^{-\alpha} \left(a - b/c\right)$.
This implies that $c = b/a$ is the global minimum of $F$ on $[0,+\infty)$. In particular, if $b/a \in [m, M]$, then
\begin{align*}
\inf_{c\in[m,M]}\Big\{\alpha c^{1-\alpha} a + (1-\alpha)c^{-\alpha} b\Big\} = \inf_{c>0}\Big\{\alpha c^{1-\alpha} a + (1-\alpha)c^{-\alpha} b\Big\} = a^{\alpha}b^{1-\alpha}.
\end{align*}
On the other hand, if $b/a < m$, then
\begin{align*}
\inf_{c\in[m,M]}\Big\{\alpha c^{1-\alpha} a + (1-\alpha)c^{-\alpha} b\Big\} &= \alpha m^{1-\alpha} a + (1-\alpha)m^{-\alpha}b 
\\ & \leq \alpha m^{1-\alpha} a + (1-\alpha)\left(\frac{b}{a}\right)^{-\alpha} b \\
&= \alpha m^{1-\alpha} a + a^{\alpha}b^{1-\alpha}.
\end{align*}
Finally, if $b/a > M$, then
\begin{align*}
\inf_{c\in[m,M]}\Big\{\alpha c^{1-\alpha} a + (1-\alpha)c^{-\alpha} b\Big\} &= \alpha M^{1-\alpha} a + (1-\alpha)M^{-\alpha}b 
\\ & \leq \alpha \left(\frac{b}{a}\right)^{1-\alpha} a + (1-\alpha)M^{-\alpha} b \\
&= a^{\alpha}b^{1-\alpha} + (1-\alpha) M^{-\alpha} b,
\end{align*}
which concludes the proof.
\end{proof}

\section{Viscosity solutions for the $p$-Laplacian  and the mean value properties}\label{app:viscositydef}
We devote this appendix to discuss the notion of viscosity solutions for the $p$-Laplacian and also for the asymptotic mean value properties used in this paper.

\subsection{Viscosity solutions of $p$-Laplace equations for $p\in(2,\infty)$}
We adopt the following notion of viscosity solution for the $p$-Laplacian.

\begin{definition}[Viscosity solution of $p$-Laplace equation]\label{def:viscsol-p-Laplace}
Let $p\in(2,\infty)$ and $\Omega\subset\R^d$ be an open set. Suppose that $f$ is a continuous function in $\Omega$. We say that a bounded lower (resp. upper) semicontinuous function $u$ in $\Omega$ is a \emph{viscosity supersolution} (resp. \emph{subsolution}) of
\[
\Delta_p u =f \quad \textup{in} \quad \Omega
\]
if the following holds: 
Whenever $x_0\in \Omega$ and $\varphi\in C^2(B_R(x_0))$ for some $R>0$  are such that 
\begin{align}\label{eq:visctouch-super}
\varphi(x)-u(x)&\leq \varphi(x_0)-u(x_0) \quad \textup{for all} \quad x\in (B_R(x_0)\setminus\{x_0\})\cap\Omega,\\\label{eq:visctouch-sub}
\text{(resp. }\varphi(x)-u(x)&\geq \varphi(x_0)-u(x_0))
\end{align}
with $\nabla\varphi(x_0)\not=0$ if $f(x_0)=0$, then we have
\begin{align}\label{eq:viscsuper}
    \Delta_p\varphi(x_0) &\leq f(x_0).\\\label{eq:viscsub}
   \text{(resp. } \Delta_p\varphi(x_0) &\geq f(x_0).)
\end{align}

A \emph{viscosity solution} is a bounded continuous function $u$ in $\Omega$ that is both a viscosity supersolution and a viscosity subsolution.
\end{definition}

\begin{remark}\label{rem:novanishgrad}
    Note that we do not need to test with test functions with vanishing gradient when the right-hand side is zero. This is due to the fact that, if $\nabla\varphi(x_0)=0$, then $\Delta_p\varphi(x_0)=0$ for $p>2$, and thus \Cref{eq:viscsub,eq:viscsuper} are trivially satisfied.
\end{remark}

Actually, it is very well known that the class of test functions $\varphi$ in \Cref{def:viscsol-p-Laplace} can be restricted to a smaller class without losing any generality. We recall several simplifications that are useful in our setting. We present all of them in the context of supersolutions, since the results for subsolutions follow in the same way.

\begin{remark}[Strict contact points]
    It is well known that condition \eqref{eq:visctouch-super} can be replaced by
    \begin{align*}
        \varphi(x_0)=u(x_0) \quad \textup{and} \quad  \varphi(x)&< u(x) \quad \textup{for all} \quad x\in (B_R(x_0)\setminus\{x_0\})\cap\Omega.
    \end{align*}
\end{remark}

\begin{remark} [More regular test functions]\label{rem:regulartest}
 Without loss of generality, we can assume that  $\varphi\in C^\infty(B_R(x_0))$. Indeed, let us consider a test function $\varphi\in C^2(B_R(x_0))$ for viscosity supersolutions (i.e. satisfying \eqref{eq:visctouch-super}), a smooth mollifier $\rho_\delta\in C_c^\infty(\overline{B_\delta(0)})$ and the mollified version of $\varphi$ given by $\varphi_\delta=\rho_\delta*\varphi$. Let us recall that $
    \varphi_\delta \to \varphi$, $\nabla\varphi_\delta \to \nabla\varphi$ and $D^2\varphi_\delta \to D^2\varphi$ as $\delta\to0^+$ uniformly in $B_{R/2}(x_0)$. By uniform convergence of $\varphi_\delta$, there exists a sequence $\{x_\delta\}_{\delta>0}$ such that $x_\delta\to x_0$ as $\delta\to0$ such that
\begin{align*}
\varphi_\delta(x)-u(x)&\leq \varphi_\delta(x_\delta)-u(x_\delta) \quad \textup{for all} \quad x\in (B_{R/2}(x_\delta)\setminus\{x_\delta\})\cap\Omega.
\end{align*}
By definition of viscosity supersolution, we have that $\Delta_p\varphi_\delta(x_\delta) \leq f(x_\delta)$, 
which implies \eqref{eq:viscsuper}, by uniform convergence of $D^2 \varphi_\delta$ and $\nabla \varphi_\delta$ to $D^2 \varphi$ and $\nabla \varphi$ respectively, and the continuity of $f$.
\end{remark}

\begin{remark}[Non vanishing $p$-Laplacian]\label{rem:novanplap} 
    Without loss of generality, we can assume that $\varphi$ is such that, whenever $\nabla \varphi(x_0)\not=0$, then  $ \Delta_p \varphi(x_0)\not=0$. First note that $|\nabla \varphi(x_0)|^{p-2}$ is well defined for all $p\in(1,\infty)$ since $\nabla \varphi(x_0)\not=0$, and, thus, 
    \[
    \Delta_p\varphi(x_0)= |\nabla \varphi(x_0)|^{p-2}\nplap \varphi(x_0)=|\nabla \varphi(x_0)|^{p-2} \left(\Delta \varphi(x) + (p-2) \left<D^2\varphi(x_0) \frac{\nabla\varphi(x_0)}{|\nabla\varphi(x_0)|}, \frac{\nabla\varphi(x_0)}{|\nabla\varphi(x_0)|}\right>\right).
    \]
    Let us assume that $\varphi$ is a test function for supersolutions at $x_0$, that is, 
\begin{align*}
     \varphi(x_0)=u(x_0) \quad \textup{and} \quad  \varphi(x)&< u(x) \quad \textup{for all} \quad x\in (B_R(x_0)\setminus\{x_0\})\cap\Omega,
\end{align*}
and such that $\Delta_p\varphi(x_0)=0$. Since $\nabla \varphi(x_0)\not=0$, then we must have that $\nplap \varphi(x_0)=0$. Now consider the function 
\[
\varphi_\delta(x)=\varphi(x)-\delta|x-x_0|^2.
\]
Clearly,
\begin{align*}
     \varphi_\delta(x_0)=u(x_0) \quad \textup{and} \quad  \varphi_\delta(x)&< u(x) \quad \textup{for all} \quad x\in (B_R(x_0)\setminus\{x_0\})\cap\Omega,
\end{align*}
and then $\Delta_p\varphi_\delta (x_0)\leq f(x_0)$. Moreover, note that $\nabla \varphi_\delta(x_0)=\nabla\varphi(x_0)$ and $D^2 \varphi_\delta(x_0)=D^2\varphi(x_0)- 2\delta I$. Then, 
\begin{align*}
\Delta_p^{\textup{N}}\varphi_\delta(x_0)&=\Delta\varphi_\delta(x_0) + (p-2)\left<D^2\varphi_\delta(x_0) \frac{\nabla\varphi_\delta(x_0)}{|\nabla\varphi_\delta(x_0)|}, \frac{\nabla\varphi_\delta(x_0)}{|\nabla\varphi_\delta(x_0)|}\right>\\
&=(\Delta\varphi(x_0)-2\delta d) + (p-2)\left<\left(D^2\varphi(x_0)- 2\delta I\right) \frac{\nabla\varphi(x_0)}{|\nabla\varphi(x_0)|}, \frac{\nabla\varphi(x_0)}{|\nabla\varphi(x_0)|}\right>\\
&= \Delta_p^{\textup{N}}\varphi(x_0) -  2(d+p-2) \delta\\
&= -  2(d+p-2) \delta<0.
\end{align*}
Thus, we get that $\Delta_p\varphi_\delta(x_0)= -2(d+p-2)|\nabla\varphi(x_0)|^{p-2} \delta <0$.
Finally, we observe that
\begin{align*}
f(x_0)\geq \Delta_p\varphi_\delta(x_0) &= |\nabla\varphi_\delta(x_0)|^{p-2} \Delta_p^{\textup{N}}\varphi_\delta(x_0) =|\nabla\varphi(x_0)|^{p-2} \left(\Delta_p^{\textup{N}}\varphi(x_0) - 2(d+p-2) \delta\right)\\
&=\Delta_p\varphi(x_0)   -2(d+p-2) |\nabla\varphi(x_0)|^{p-2} \delta.
\end{align*}
Since we can take $\delta>0$ arbitrarily small, we conclude that $\Delta_p\varphi(x_0)\leq f(x_0)$, as we wanted to check. 
\end{remark}

Let us define now the concept of viscosity solutions for the boundary value problem
    \begin{align}\label{eq:ppoisson-appendix}
        \left\{
        \begin{aligned}
           \Delta_p u(x)&= f(x) &\textup{if}& \quad x\in \Omega,\\
            u(x)&= g(x) &\textup{if}& \quad x\in \partial\Omega.
        \end{aligned}
        \right.
    \end{align}

\begin{definition}[Viscosity solution of $p$-Laplace boundary value problem]\label{def:viscsol-p-Laplace-poisson}
Let $p\in(2,\infty)$ and $\Omega\subset\R^d$ be a bounded open set. Suppose that $f$ is a continuous function in $\overline{\Omega}$ and that $g$ is a continuous function in $\partial \Omega$. We say that a bounded lower (resp. upper) semicontinuous function $u$ in $\overline{\Omega}$ is a \emph{viscosity supersolution} (resp. \emph{ subsolution}) of
\eqref{eq:ppoisson-appendix}
if the following holds: 
\begin{enumerate}[\rm (a)]
    \item $u$ is a viscosity supersolution (resp. subsolution) of $\Delta_p u=f$ in $\Omega$ (in the sense of \Cref{def:viscsol-p-Laplace})
    
    \medskip
    
    \item $u(x)\geq g(x)$ (resp. $u(x)\leq g(x)$) for all $x\in \partial \Omega$.
\end{enumerate}
 \end{definition}

\subsection{Asymptotic expansions in the viscosity sense for $p\in(2,\infty)$}

Having in mind all the considerations of the previous section about how restrictive test functions can be taken, we propose a definition of viscosity solution of the asymptotic expansion problems under minimal requirements on the test functions. 
Let us recall the notation
\[
 \overline{\mathcal{A}}_\veps[\varphi;f](x)\coloneqq\left\{\begin{aligned}
 \mathcal{A}_\veps^+[\varphi](x) \quad \textup{if} \quad f(x)\geq0,\\
  \mathcal{A}_\veps^-[\varphi](x) \quad \textup{if} \quad f(x)<0,
 \end{aligned}\right. \quad \textup{and} \quad   \underline{\mathcal{A}}_\veps[\varphi;f](x)\coloneqq\left\{\begin{aligned}
 \mathcal{A}_\veps^+[\varphi](x) \quad \textup{if} \quad f(x)>0,\\
  \mathcal{A}_\veps^-[\varphi](x) \quad \textup{if} \quad f(x)\leq 0.
 \end{aligned}\right.
\]

\begin{definition}[The asymptotic mean value property in the viscosity sense]\label{def:asMVPviscosity}
Let $p\in(2,\infty)$, $\Omega\subset\R^d$ be an open set and $\mathcal{A}_{\veps}$ be given by either $\overline{\mathcal{A}}_\veps$ or $\underline{\mathcal{A}}_\veps$.
  Suppose that $f$ is a continuous function in $\Omega$.
We say that a bounded lower (resp. upper) semicontinuous function $u$ in $\Omega$ is a \emph{viscosity supersolution} (resp. \emph{ subsolution}) of
\begin{align*}
     \mathcal{A}_\veps[u;f]-u=\veps^2 J_p(f) + o(\veps^2) \quad \textup{in} \quad \Omega
\end{align*}
if the following holds: Whenever $x_0\in \Omega$ and $\varphi\in C^\infty(B_R(x_0))$ for some $R>0$ are such that \eqref{eq:visctouch-super} (resp. \eqref{eq:visctouch-sub}) holds with $\nabla\varphi(x_0)\not=0$ and $\Delta_p\varphi(x_0)\not=0$ if $f(x_0)=0$, 
then we have
\begin{align*}
 \mathcal{A}_\veps[\varphi;f](x_0)-\varphi(x_0)&\leq \veps^2 J_p(f(x_0)) + o(\veps^2).
    \\
   \text{(resp. }  \mathcal{A}_\veps[\varphi;f](x_0)-\varphi(x_0)&\geq \veps^2 J_p(f(x_0)) + o(\veps^2).)
\end{align*}

A \emph{viscosity solution} is a continuous function $u$ in $\Omega$ that is both a viscosity supersolution and a viscosity subsolution.
\end{definition}

\begin{remark}
   Note that we have the extra assumptions  $\nabla\varphi(x_0)\not=0$ and $\Delta_p\varphi(x_0)\not=0$ if $f(x_0)=0$ on the definition of viscosity solution for the asymptotic mean value property. This is motivated from the definition of viscosity solution of the $p$-Laplacian problem together with \Cref{rem:novanishgrad,rem:novanplap}. We have also asked $\varphi\in C^\infty(B_R(x_0))$ instead of just $\varphi\in C^2(B_R(x_0))$ following \Cref{rem:regulartest}.
\end{remark}

%%%%%%%%%%%%%%%%%%%%%%%%%%%%%%%%%%%%%%%%%%%%%%%%%%%%%%
	%          7. Acknowlegments
	%%%%%%%%%%%%%%%%%%%%%%%%%%%%%%%%%%%%%%%%%%%%%%%%%%%%%%
\section*{Acknowledgments}
	
	We want to thank J. J. Manfredi for several nice discussions that helped us to improve our results.
	
	This work was started during a visit of J. D. Rossi to Universidad Autónoma de Madrid and ICMAT for the thematic semester ``Diffusion, Geometry, Probability and Free Boundaries"  and 
	finished during a visit of F. del Teso to Universidad de Buenos Aires funded by the "José Castillejo" scholarship with reference CAS23/00051. The authors are grateful to both
	institutions for the friendly and stimulating working atmosphere. 
	
	F. del Teso was partially supported by  the Spanish Government through  RYC2020-029589-I, PID2021-
127105NB-I00 and CEX2019-000904-S funded by the MICIN/AEI.
	
	 J. D. Rossi was partially supported by 
			CONICET PIP GI No 11220150100036CO
(Argentina), PICT-03183 (Argentina) and UBACyT 20020160100155BA (Argentina).

\end{document}